\newenvironment{customlegend}[1][]{%
    \begingroup
    \csname pgfplots@init@cleared@structures\endcsname
    \pgfplotsset{#1}%
}{%
    \csname pgfplots@createlegend\endcsname
    \endgroup
}%
\def\addlegendimage{\csname pgfplots@addlegendimage\endcsname}
\pgfplotsset{compat=1.16}
\newtheorem{theorem}{Theorem}[section]
\newtheorem*{theorem*}{Theorem}
\newtheorem*{theoremY*}{Theorem Y}
\newtheorem*{theoremAB*}{Theorem AB}
\newtheorem{corollary}[theorem]{Corollary}
\newtheorem*{corollary*}{Corollary}
\newtheorem{proposition}[theorem]{Proposition}
\newtheorem{lemma}[theorem]{Lemma}
\newtheorem*{claim*}{Claim}
\theoremstyle{definition}
\newtheorem{definition}[theorem]{Definition}
\theoremstyle{remark}
\newtheorem*{remark*}{Remark}
\renewcommand{\Bbb}[1]{\mathbb{#1}}
\newcommand{\bbE}{{\Bbb E}}
\newcommand{\bbN}{{\Bbb N}}         
\newcommand{\bbP}{{\Bbb P}}
\newcommand{\bbQ}{{\Bbb Q}}         
\newcommand{\bbR}{{\Bbb R}}        
\newcommand{\bbZ}{{\Bbb Z}}         
\newcommand{\cC}{{\cal C}}
\newcommand{\cF}{{\cal F}}
\newcommand{\cG}{{\cal G}}
\newcommand{\cH}{{\cal H}}
\newcommand{\cI}{{\cal I}}
\newcommand{\cQ}{{\cal Q}}
\newcommand{\cV}{{\cal V}}
\newcommand{\ba}{{\overline a}}
\newcommand{\eg}{\textit{e.g.}\@\xspace}
\newcommand{\ie}{\textit{i.e.}\@\xspace}
\newcommand{\diam}{\text{diam}}
\renewcommand{\le}{\leq}
\renewcommand{\ge}{\geq}
\newcommand{\bi}{{\overline {\imath}}}
\newcommand{\bj}{{\overline  {\jmath}}}
\newcommand{\bo}{{\overline o}}
\DeclareMathOperator{\dimh}{\dim_H}
\title{Dynamical covering sets in self-similar sets}
\author{Balazs Barany\footnote{BB acknowledges support from the grants NKFI~K142169 and NKFI
  KKP144059 ``Fractal geometry and applications'' Research Group.} \\(BME) \and Henna
  Koivusalo\footnote{HK acknowledges travel funding through A\"OU's collaborative grant 103\"ou6.}
  \\ (Bristol) \and Sascha Troscheit\footnote{ST was initially supported by FWF Lise Meitner
    Fellowship M-2813 and European Research Council Marie Sk\l{}odowska--Curie Personal Fellowship
    \#101064701. ST also acknowledges travel funding through A\"OU's collaborative grant 103\"ou6 and
Lisa \& Carl-Gustav Esseens fund for mathematics (2025).} \\(Uppsala)}
\date{\today}
\newcommand{\Addresses}{{
    \bigskip
    \footnotesize

    B.~B\'ar\'any, \textsc{Department of Stochastics, HUN-REN-BME Stochastics Research Group,
      Institute of Mathematics, Budapest University of Technology and Economics, M\H{u}egyetem rkp.~3.,
    H-1111 Budapest, Hungary.}\par\nopagebreak
    \textit{E-mail address:} \texttt{barany.balazs@ttk.bme.hu}

    \medskip

    H.~Koivusalo, \textsc{School of Mathematics, University of Bristol, Fry Building, Woodland Road,
    Bristol, BS8 1UG, United Kingdom.}\par\nopagebreak
    \textit{E-mail address:} \texttt{henna.koivusalo@bristol.ac.uk}
    \medskip

    S.~Troscheit, \textsc{Department of Mathematics, Box 480, 751 06 Uppsala University, Sweden.}\par\nopagebreak
    \textit{E-mail address:} \texttt{sascha.troscheit@math.uu.se}
}}
\begin{document}

\frenchspacing
\maketitle

\begin{abstract}
  We study the size of \emph{dynamical covering sets} on a self-similar set. Dynamical covering sets
  are limsup sets generated by placing shrinking target sets around points along an orbit in a
  dynamical system.  In the case when the target sets are balls with sizes depending on the centre,
  we determine the size of the dynamical covering set as a function of the shrinking rate. In
  particular, we find sharp conditions guaranteeing when full Dvoretzky-type covering, and full
  measure occur. We also compute the Hausdorff dimension in the remaining cases.  The proofs apply
  in the cases of targets centred at typical points of the self-similar set, with
  respect to any  Bernoulli measure on it. Unlike in existing work on dynamical coverings, and
  despite the dimension value featuring phase transitions, we demonstrate that the behaviour can be
  characterised by a single pressure function over the full range of parameters. The techniques are
  a combination of classical dimension theoretical estimates and intricate martingale arguments.
\end{abstract}

%
%

\section{Introduction}

An
{\it iterated function system (IFS)} is a collection of contractive maps $\{f_1, \dots, f_N\}$, and
is associated with a unique, non-empty, compact attractor, or invariant set
$\Lambda=\cup_{i=1}^Nf_i(\Lambda)$. Iterated function systems are one of the  main methods for
generating sets with fractal features. The invariance of the set $\Lambda$ under the iterated
function system implies an amount of scale-invariance that guarantees fine structure on all levels
of magnification. The modern theory of iterated function systems began with Hutchinson
\cite{Hutchinson1981}, who computed the Hausdorff dimension of $\Lambda$ in the case of similarity
maps $\{f_1, \dots, f_N\}$, satisfying a separation condition. In this case, the set $\Lambda$ is
said to be {\it self-similar}. 

There is a natural expanding dynamical system $T:\Lambda\to \Lambda$ given locally by the inverses
of $f_i$. More precisely, for any  point $x\in  f_i(\Lambda)$, one can define $T(x)$ by
$f_i^{-1}(x)$ for an appropriate choice of $i$. Such $i$ must exist, but is not necessarily unique.
Under sufficient separation conditions, the choice of $f_i$ is always unique and the dynamical system
is topologically conjugate to the left shift map on $N$ symbols, via a coding map of points of $\Lambda$. 

In the context of dynamical systems, a classical question is that of \emph{recurrence}. A particular
example is the \emph{shrinking target problem}: How large is the set of points that hit a sequence
of targets $A_i$ infinitely many times? That is, how large is the set
\[
  R_{st} = \{x \in X : T^n(x) \in A_n \text{ for infinitely many }n\in\bbN\}.
\]
The term shrinking target was introduced by Hill and Velani, see \cite{HillVelani1995,
HillVelani1999}. Questions on such shrinking targets for iterated function systems have received an
increasing amount of interest over the past decade, see \cite{AllenBakerBarany2025, AllenBarany2021,
Baker2023, Baker2024, BakerKoivusalo2024, BaranyRams2018, BaranyTroscheit2022, Daviaud2025,
KirsebomKundePersson2023, KoivusaloRamirez2018, Seuret2018} and the references therein.  A
particular feature of such sets is that they can be interpreted as $\limsup$ sets, since
\[
  R_{st} = \limsup_{n\in\bbN} T^{-n}(A_n) = \bigcap_{n\in\bbN}\bigcup_{k\geq n}T^{-k}(A_k).
\]

Instead of a sequence of targets, one can consider the orbit of a (typical) point $x_0\in X$ under
the dynamics $T:X\to X$ and study the set of points that lie in $T^n(x_0)+A_n$ infinitely often, where
$A_n$ is a sequence of nested sets. For example, when $A_n = B(0,r_n)$ is a
sequence of balls, we set
\[
  R(x_0, A_n) = \limsup_{n\in\bbN} T^n(x_0)+A_n = \limsup_{n\in\bbN} B(T^n(x_0),r_n)
  =\bigcap_{n\in\bbN}\bigcup_{k\geq n} B(T^k(x_0),r_k).
\]
The question on whether $R(x_0, A_n) = X$ is known as the \emph{dynamical covering problem} and is
the dynamical analogue of the classical Dvoretzky covering problem from probability theory (see
\eg \cite{Dvoretzky1956} and, more recently, \cite{JarvenpaaJarvenpaaMyllyojaStenflo2025, JMS25}).  When
$R(x_0, A_n)\subsetneq X$ one can ask the finer question of the size of $R(x_0, A_n)$ in terms of
measure or dimension. This problem has been studied in some restricted set-ups for interval maps,
see \cite{FanSchmelingTroubetzkoy2013, LiaoSeuret2013} and also \cite{PerssonRams2017}. (In these
works, the dynamical covering problem is referred to as dynamical Diophantine approximation, or
shrinking targets problem). In all of these works it is observed that the size of the dynamical
covering goes through distinct regimes with several phase transitions.  Namely, when $r_n \approx
n^{-1/\alpha}$ and $\alpha$ is small, the dimension is linear in $\alpha$, but the system goes
through a phase transition when $\alpha$ equals the entropy of the measure, after which it follows a
multifractal spectrum. One of the key techniques in these works is the mass transference principle,
that gives a straightforward method to lower dimension bound estimates. 

In the present article, we study the dynamical covering set in the context of self-similar sets
satisfying the open set condition. For definitions see \cref{sec:background}. For a typical centre
(with respect to a Bernoulli measure on $\Lambda$), and for target sizes given by symbolic balls
shrinking at a rate associated with the centre point, we find sharp conditions under which the
dynamical covering set is all of $\Lambda$. This is the analogue of the Dvoretzky covering problem
in the study of random covering sets. Furthermore, we find sharp conditions under which the
dynamical covering set is of full measure with respect to the Hausdorff measure on $\Lambda$ (the
Hausdorff measure $\mathcal H^{\dim \Lambda}$ is positive and finite due to the open set condition,
see Hutchinson \cite{Hutchinson1981}). The bulk of the work in this article, however, is to compute
the Hausdorff dimension of the dynamical covering set in all the remaining cases. We also compute
the dimension of the complement of the dynamical covering set,
for those parameter values when the dynamical covering set is of full measure. 

To complement the past results on the topic, we present a single pressure formula governing the
dimension value across the full range of shrinking rates. The phase transitions show as alternative
forms to this pressure formula, see  \cref{thm:legendre}.  This article is the first to study
self-similar sets with inhomogeneous contraction rates, which reveals novel phenomena: We observe
that for small values of $\alpha$, the dimension value is no longer linear in $\alpha$, but strictly
concave. Furthermore, the dimension behaviour for larger values of $\alpha$ is not given by the
multifractal spectrum of the associated measure, but through a different variational principle that
we investigate.  Further, in case of symbolic balls, the inhomogeneity of the contraction ratios
makes it impossible to use the method of mass transference principle that underlays all previous
results. Our set-up necessitates the use of intricate martingale methods to determine the dimension.
The situation is analogous to a result from the shrinking target context \cite{AllenBarany2021}, which
also highlight that the mass transference principle is ineffectual when the contractions are
inhomogeneous and the balls are symbolic.

\subsection{Main result} \label{sec:background}

Let us present more precise definitions before giving the full statements of the main theorems. Let
$\Phi=\{f_1, \dots, f_N\}$ be an iterated function system of similarities on $\bbR^d$, with
contraction ratios $\lambda_i\in(0,1)$. Denote by $\Lambda$ the corresponding self-similar set,
satisfying \(\Lambda=\bigcup_{i=1}^N f_i(\Lambda).\) We assume throughout that the iterated function
system satisfies the Open Set Condition (OSC), that is, there exists an open and bounded set
$U\subset\bbR^d$ such that $f_i(U)\subset U$ for every $i=1,\ldots,N$ and $f_i(U)\cap
f_j(U)=\emptyset$ for every $i\neq j$. We assume without loss of generality that $\Lambda\cap
U\neq\emptyset$, see Bandt and Graf \cite{BandtGraf1992} and Schief \cite{Schief1994}. Denote by
$s_0$ the Hausdorff dimension of $\Lambda$, which is the unique solution of
\begin{equation*}
  \sum_{i=1}^N\lambda_i^{s_0}=1,
\end{equation*}
see Hutchinson \cite{Hutchinson1981}. For simplicity, let
$\lambda_{\max}=\max\{\lambda_i:i\in\{1,\ldots,N\}\}<1$ and
$\lambda_{\min}=\min\{\lambda_i:i\in\{1,\ldots,N\}\}>0$.

Denote the symbolic space by $\Sigma=\{1, \dots, N\}^\bbN$, with the left shift map $\sigma$. We
write $\Sigma_n$ for the collection of words of length $n$, and $\Sigma_*=\cup_{n=0}^\infty
\Sigma_n$. For $\bi=(i_1, \dots, i_n)\in \Sigma_n$, let $f_{\bi}=f_{i_1}\circ\cdots\circ f_{i_n}$
and $\lambda_\bi=\lambda_{i_1}\cdots \lambda_{i_n}$. Let $\bi|_n$ denote the first $n$ digits of a
word $\bi\in\Sigma\cup\Sigma_*$, and $|\bi|$ the length of a finite word. Denote by $\bi<\bj$ when a
finite word $\bi$ appears as the initial segment of a finite or infinite $\bj$. Define cylinders for
any $\bi\in \Sigma_*$ by \([\bi]=\{\bj\in \Sigma\mid \bi<\bj\}.\) 
The symbolic space gives a coding of points of $\Lambda$ via the map $\pi: \Sigma\to \Lambda$,
\[
  \pi(\bi)=\lim_{n\to \infty} f_{\bi|_n}(0).
\]
Note that the map $\pi\colon\Sigma\to\Lambda$ is ``almost'' one-to-one, namely, the points with
multiple preimages in $\Sigma$ are negligible in some proper sense.

\begin{definition}
Let $\bo\in \Sigma$ and let $\ell\colon\bbN\to \bbN$ be any increasing function diverging to
$\infty$. The {\bf dynamical covering set} $R(\bo, \ell)\subseteq \Lambda$ is
\begin{equation*}
  R(\bo, \ell)=\pi\left(\left\{\bi\in \Sigma\mid \bi\in\left[(\sigma^n\bo)|_{\ell(n)}\right]\text{
	infinitely
  often}\right\}\right)=\pi\left(\bigcap_{k=1}^\infty\bigcup_{n=k}^\infty\left[(\sigma^n\bo)|_{\ell(n)}\right]\right).
\end{equation*}
\end{definition}

We can also consider the map $T\colon\Lambda\to\Lambda$ that takes any
$x\in f_i(\Lambda)$ to $f_i^{-1}(x)\in \Lambda$. This is an expanding map on the
points of unique coding, which conjugates to $\sigma$ by $\pi\circ \sigma=T\circ \pi$. At points
with multiple codings, a choice of the map $f_i^{-1}$ has to be made.  Notice that the set $R(\bo,
\ell)$ is analogous to the dynamical covering set on $\Lambda$ following the orbit of the
point $\pi(\bo)$ under $T$, namely, to the set
\[
  \widehat R(\bo, \ell)=\left\{x\in \Lambda\mid x\in B\left(T^n(\pi(\bo)),
  \lambda_{(\sigma^n\bo)|_{\ell(n)}}\right)\text{ infinitely often}\right\}.
\]
However, in what follows, we will focus our analysis on the projected symbolic dynamical covering set $R(\bo, \ell)$. 

Let $p=(p_1, \dots, p_N)$ be a probability vector, and $\bbP_p$ the corresponding Bernoulli measure
on $\Sigma$.  Without loss of generality, we may assume that $p_i>0$ for every $i=1,\ldots,N$. For
simplicity, we denote the push-forward measure $\pi_*\bbP_p$ on $\Lambda$ also by $\bbP_p$.
Let us denote the largest and the smallest probabilities by
$p_{\max}=\max\{p_i:i\in\{1,\ldots,N\}\}$ and $p_{\min}=\min\{p_i:i\in\{1,\ldots,N\}\}$.
Our main theorem on the dynamical covering set states

\begin{theorem}\label{thm:main}
  Let $\{f_1, \dots, f_N\}$ be an IFS of similarities satisfying the open set condition, and with
  contraction ratios $\lambda_1, \dots, \lambda_N$. Let $\alpha>0$ and assume that $\ell: \mathbb
  N\to \mathbb N$ is monotone increasing and it satisfies
  \begin{equation}\label{eq:condonl}
    \liminf_{n\to \infty} \frac{\ell(n)}{\log n}=\tfrac 1\alpha.
  \end{equation}
  Let $\bbP_p$ be a Bernoulli measure corresponding to a probability vector $p=(p_1, \dots, p_N)$.
  Then, for $\bbP_{p}$-almost every $\bo \in \Sigma$, the Hausdorff dimension of $R(\bo, \ell)$ is
  given by the solution $s=s(\alpha)$ to
  \begin{equation}\label{eq:salpha}
    s(\alpha)=\inf_{q\in[0,1]} \left\{P_\alpha(q) : \sum_{i=1}^N \lambda_i^{P_{\alpha}(q)}(p_ie^\alpha)^q=1\right\}.
  \end{equation}

  Furthermore, we have for $\bbP_{p}$-almost every $\bo$ that if $-\log p_{\min}>\alpha>-
  \sum_{i=1}^N \lambda_i^{s_0}\log  p_i$ then $\cH^{s_0}(R(\bo, \ell))=\cH^{s_0}(\Lambda)$ but
  $R(\bo, \ell)\neq\Lambda$, and if $\alpha>-\log p_{\min}$ then $R(\bo, \ell)=\Lambda$.
\end{theorem}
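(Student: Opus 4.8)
The plan is to prove this theorem in three parts, addressing the three distinct regimes of $\alpha$. Throughout I would work on the symbolic space $\Sigma$ and use the standard fact that under the OSC the projection $\pi$ is bi-Lipschitz off a negligible set, so Hausdorff dimension and $\cH^{s_0}$-measure transfer between $\Sigma$ and $\Lambda$; in particular $\cH^{s_0}(\Lambda)$ corresponds to the natural self-similar (Hutchinson) measure, which is comparable to $\lambda_{\bi}^{s_0}$ on cylinders $[\bi]$.

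\textbf{Step 1: the Borel--Cantelli input.} The first step is to understand, for $\bbP_p$-a.e.\ $\bo$, how the measures $\cH^{s_0}\bigl([(\sigma^n\bo)|_{\ell(n)}]\bigr) \asymp \lambda_{(\sigma^n\bo)|_{\ell(n)}}^{s_0}$ behave. By \eqref{eq:condonl} we have $\ell(n)\approx \tfrac1\alpha\log n$ along a subsequence (and $\ell(n)\gtrsim \tfrac1\alpha\log n$ eventually), so a typical cylinder $(\sigma^n\bo)|_{\ell(n)}$ has measure roughly $n^{-c}$ for an exponent $c$ determined by $\alpha$ and the geometry of $p$ and the $\lambda_i$. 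I would first establish the \emph{positive} direction: if $\alpha>-\log p_{\min}$ then for $\bbP_p$-a.e.\ $\bo$, for \emph{every} fixed cylinder $[\bj]$, there are infinitely many $n$ with $(\sigma^n\bo)|_{\ell(n)} = \bj$; this gives $R(\bo,\ell)=\Lambda$. The mechanism: by ergodicity of the shift under $\bbP_p$, the word $\bj$ appears with positive frequency as a prefix of $\sigma^n\bo$, so there are $\gtrsim N^n \cdot (\text{const})$ such $n$ up to time $N^n$; one then needs $\ell(n)\le n$ along enough of these, which holds because $\ell(n)\approx\tfrac1\alpha\log n = o(n)$. Making ``for every cylinder $[\bj]$'' work simultaneously requires only countably many conditions, so a.e.\ $\bo$ works for all of them, hence the limsup set is all of $\Sigma$.

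\textbf{Step 2: the full-measure-but-not-everything regime.} For $-\log p_{\min}>\alpha>-\sum_i\lambda_i^{s_0}\log p_i$ I would prove $\cH^{s_0}(R(\bo,\ell))=\cH^{s_0}(\Lambda)$ via a second-moment / quasi-independence Borel--Cantelli argument on $\Sigma$, using the measure $\mu=\cH^{s_0}|_\Lambda$. The events are $E_n(\bo)=[(\sigma^n\bo)|_{\ell(n)}]$; one computes $\sum_n \mu(E_n(\bo)) = \sum_n \lambda_{(\sigma^n\bo)|_{\ell(n)}}^{s_0}$ and shows this diverges for $\bbP_p$-a.e.\ $\bo$ precisely when $\alpha > -\sum_i\lambda_i^{s_0}\log p_i$: indeed the $\bbP_p$-expectation of $\lambda_{(\sigma^n\bo)|_{\ell(n)}}^{s_0}$ is $\bigl(\sum_i p_i\lambda_i^{s_0}\bigr)^{\ell(n)}$-ish, but the \emph{$\bbP_p$-typical} value of $\tfrac1{\ell(n)}\log\lambda_{(\sigma^n\bo)|_{\ell(n)}}^{s_0}$ is $s_0\sum_i p_i\log\lambda_i$, so $\mu(E_n)\approx n^{\,s_0\sum p_i\log\lambda_i/(-\alpha)\cdot(-1)}$... more precisely one uses the law of large numbers to see $\mu(E_n(\bo))\approx n^{-\beta}$ with $\beta = \tfrac1\alpha\bigl(-s_0\sum_i p_i\log\lambda_i\bigr)$, wait — I would instead directly compare to $\sum_i \lambda_i^{s_0}\log p_i$: the correct threshold emerges because one must count, for each target cylinder $\bj$, the $\bbP_p$-measure of $\{\bo: (\sigma^n\bo)|_{\ell(n)}=\bj\}$, which is $\bbP_p([\bj]) = p_{\bj}$, and then weight by $\mu([\bj])\asymp\lambda_{\bj}^{s_0}$; summing $\sum_{\bj\in\Sigma_{\ell(n)}} p_{\bj}\lambda_{\bj}^{s_0} = \bigl(\sum_i p_i\lambda_i^{s_0}\bigr)^{\ell(n)}$ and using $\ell(n)\approx\tfrac1\alpha\log n$ gives a series $\sum_n n^{\frac1\alpha\log(\sum_i p_i\lambda_i^{s_0})}$, which diverges iff $-\alpha < -\log\bigl(\sum_i p_i\lambda_i^{s_0}\bigr)$... this needs to be reconciled with the stated threshold $-\sum_i\lambda_i^{s_0}\log p_i$, and the reconciliation is that the relevant quantity is not the annealed expectation but the a.s.\ growth rate, governed by $\sum_i\lambda_i^{s_0}\log p_i$ via a change-of-measure (tilting to the measure $q_i = \lambda_i^{s_0}$, which is a probability vector by definition of $s_0$). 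Under this tilt the a.s.\ exponent of $\mu(E_n(\bo))$ becomes exactly $\tfrac1\alpha\sum_i\lambda_i^{s_0}\log p_i$, so divergence of the first-moment sum (after the change of measure) holds iff $\alpha > -\sum_i\lambda_i^{s_0}\log p_i$. Given divergence, a standard Erd\H os--Chung / covering-lemma argument (the events $E_n(\bo)$ have controlled overlaps since distinct-scale cylinders are either nested or disjoint) upgrades this to $\mu(\limsup E_n(\bo))=\mu(\Lambda)$, i.e.\ full $\cH^{s_0}$-measure. Separately, that $R(\bo,\ell)\ne\Lambda$ in this range follows because $\alpha < -\log p_{\min}$ forces the target cylinders to be ``too long'': the specific cylinder $[(\min)^{m}]$ (the all-$p_{\min}$ word repeated) is covered only finitely often, since $\bbP_p(\text{prefix } = (\min)^m \text{ at time } n) = p_{\min}^m$ while the number of relevant $n$ with $\ell(n)\ge m$ up to time $N$ grows too slowly — a direct Borel--Cantelli computation. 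Actually the cleanest witness: pick a cylinder whose $\bbP_p$-mass decays faster than the density of suitable times allows, which exists precisely when $\alpha<-\log p_{\min}$.

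\textbf{Step 3: the dimension formula} \eqref{eq:salpha}. For the lower bound I would construct, for each admissible $q\in[0,1]$, a Cantor-like subset of $R(\bo,\ell)$ (using a sequence of successfully-hit target cylinders along $\bo$) carrying a self-affine-type measure whose cylinders have mass $\approx \prod \lambda^{P_\alpha(q)}(p e^\alpha)^q$ normalised, and apply the mass distribution principle; the constraint $\sum_i\lambda_i^{P_\alpha(q)}(p_ie^\alpha)^q=1$ is exactly the normalisation making this a probability vector, and optimising over $q$ gives $s(\alpha)$. The key technical point, and what I expect to be the main obstacle, is handling the inhomogeneity of the $\lambda_i$: unlike the interval-map case one cannot invoke the mass transference principle, so the lower bound requires the ``intricate martingale'' control of how often, and at what scales, the random cylinders $(\sigma^n\bo)|_{\ell(n)}$ align — one must show that enough targets at the \emph{right} length scales are hit to support a measure of the claimed dimension, which is a large-deviations statement about the empirical distribution of the pair (cylinder content, cylinder length) along the orbit, and this is where the delicate second-moment martingale estimates enter. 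The upper bound is softer: it is a covering argument, $R(\bo,\ell)\subseteq\bigcup_{n\ge k}[(\sigma^n\bo)|_{\ell(n)}]$ for all $k$, and one bounds $\sum_{n\ge k}\bigl(\lambda_{(\sigma^n\bo)|_{\ell(n)}}\bigr)^{s}$ using the $\bbP_p$-typical behaviour of the lengths and contents, which converges for $s>s(\alpha)$ by the definition of $P_\alpha$ as a pressure. Finally, consistency check: as $\alpha\to\infty$ the constraint forces $q\to 0$ and $P_\alpha(q)\to s_0$, recovering $R(\bo,\ell)=\Lambda$; and at $\alpha=-\sum_i\lambda_i^{s_0}\log p_i$ the infimum is attained at $q=1$ with $P_\alpha(1)=s_0$, matching the full-measure threshold — these limiting cases are how I would sanity-check the three parts hang together.
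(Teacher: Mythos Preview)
Your proposal has the right large-scale architecture (covering/Borel--Cantelli for the upper bound and the measure statements, a measure-construction for the lower bound), but there are genuine gaps in each step that would prevent the argument from going through as written.

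\textbf{Step 1.} The claim ``for every fixed cylinder $[\bj]$, there are infinitely many $n$ with $(\sigma^n\bo)|_{\ell(n)}=\bj$'' is simply false: since $\ell$ is monotone increasing and divergent, $\ell(n)=|\bj|$ only for $n$ in the finite block $\ell^{-1}(|\bj|)$. What you actually need (and what the paper proves) is that along a subsequence of target lengths $m$, \emph{every} cylinder of length $m$ is hit by some $k\in\ell^{-1}(m)$. The paper does this by a direct union bound: the probability that a given $\bj\in\Sigma_m$ is missed is at most $(1-p_{\bj})^{\#\ell^{-1}(m)/m}\le(1-p_{\min}^m)^{e^{\alpha m}/m}$; summing over the $N^m$ cylinders and over $m$ converges when $p_{\min}e^{\alpha}>1$, and Borel--Cantelli finishes. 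Your ergodicity argument gives only asymptotic frequencies and provides no control uniform over cylinders of growing length, so the ``countably many conditions'' step fails.

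\textbf{Step 2.} Your second-moment plan could in principle be made to work, but the paper's route is cleaner and different: full $\cH^{s_0}$-measure for $\alpha>\alpha_1$ is obtained by showing $\dim_H R(\bo,\ell)^{c}<s_0$ (a covering estimate on the complement), not by a direct Erd\H os--Chung argument. For $R(\bo,\ell)\neq\Lambda$ when $\alpha<-\log p_{\min}$, the paper shows that a suitably tilted Bernoulli measure $\nu$ (one with $\sum_i q_i\log p_i+\alpha<0$) satisfies $\nu(R(\bo,\ell))=0$; your ``all-$p_{\min}$ word'' heuristic points at the right phenomenon but is not itself a proof.

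\textbf{Step 3.} The most important structural omission is that the lower bound splits into two regimes with \emph{different} mechanisms. For $\alpha\in(\alpha_0,\alpha_1)$ the lower bound is essentially soft: the maximising Bernoulli measure $\bbQ_{q^*}$ from the variational principle satisfies $\bbQ_{q^*}(R(\bo,\ell)^c)=0$ by a first-moment Borel--Cantelli (this is where the condition $\sum_i q_i^*\log p_i+\alpha>0$ is used), so $\dim_H R(\bo,\ell)\ge \dim_H\bbQ_{q^*}=\inf_q P_\alpha(q)$. No Cantor construction is needed here. Only for $\alpha\le\alpha_0$ does one need the random Cantor subset and $L^2$-martingale; and there the supporting measure is \emph{not} a fixed Bernoulli measure as you suggest, but a random measure built recursively from the orbit $\bo$, with second moments controlled by a martingale argument. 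Your sketch collapses these two cases and, in particular, the ``weights $\lambda^{P_\alpha(q)}(pe^\alpha)^q$'' idea does not directly produce a measure on $R(\bo,\ell)$ in the $\alpha\le\alpha_0$ regime.

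Finally, your sanity check is off: at $\alpha=\alpha_1=-\sum_i\lambda_i^{s_0}\log p_i$ the infimum in \eqref{eq:salpha} is attained at $q=0$ (where $P_\alpha(0)=s_0$), not at $q=1$.
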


We remark that the statement of \cref{thm:main} remains valid for the set $\widehat R(\bo,\ell)$.
Indeed, we may assume without loss of generality that the diameter $\diam(\Lambda)=1$, and so,
$R(\bo,\ell)\subseteq\widehat R(\bo,\ell)$ for every $\bo\in\Sigma$, which gives the lower bound. On
the other hand, a similar argument to the one in Section~\ref{sec:upper} with only minor
modification gives the upper bound. We leave the proof to the interested reader.

Our second main result is on the size of the complement of the dynamical covering set:
\begin{theorem}\label{thm:main2}
  Let $\{f_1, \dots, f_N\}$ be an IFS of similarities satisfying the open set condition, and with
  contraction ratios $\lambda_1, \dots, \lambda_N$. Let $\alpha>0$ and assume that $\ell: \mathbb
  N\to \mathbb N$ is monotone increasing and it satisfies
  \begin{equation*}
    \liminf_{n\to \infty} \frac{\ell(n)}{\log n}=\tfrac 1\alpha.
  \end{equation*}
  Let $\bbP_p$ be a Bernoulli measure corresponding to a probability vector $p=(p_1, \dots, p_N)$.
  Then, for $\bbP_{p}$-almost every $\bo \in \Sigma$, the Hausdorff dimension of $R(\bo, \ell)^c$ is
  given by the solution $t=t(\alpha)$ to
  \begin{equation}\label{eq:salpha2}
    t(\alpha)=\inf_{q<0} \left\{P_\alpha(q) : \sum_{i=1}^N \lambda_i^{P_{\alpha}(q)}(p_ie^\alpha)^q=1\right\}.
  \end{equation}
  Furthermore, if $\alpha<-
  \sum_{i=1}^N \lambda_i^{s_0}\log  p_i$ then $\cH^{s_0}(R(\bo, \ell)^c)=\cH^{s_0}(\Lambda)$ and if
  $\alpha>-\log p_{\min}$ then $R(\bo,\ell)^c=\emptyset$ for $\bbP_{p}$-almost every $\bo$.
\end{theorem}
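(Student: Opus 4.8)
The plan is to realise $R(\bo,\ell)^c$ as a countable union of ``survivor sets'' and to run matching dimension bounds that parallel the proof of \cref{thm:main}; the essential difference is that belonging to the complement forces the coding of a point to be \emph{$p$-atypical}, which is why the variational quantity \eqref{eq:salpha2} ranges over $q<0$ rather than over $q\in[0,1]$. Write
\[
  E_k=\bigcap_{n\geq k}\bigl[(\sigma^n\bo)|_{\ell(n)}\bigr]^{c}
  =\bigl\{\bi\in\Sigma : \bi|_{\ell(n)}\neq(\sigma^n\bo)|_{\ell(n)}\ \text{for all }n\geq k\bigr\}.
\]
Then $R(\bo,\ell)^c=\pi\bigl(\bigcup_{k=1}^{\infty}E_k\bigr)$ up to the $\pi$-negligible set of points with several codings, so $\dimh R(\bo,\ell)^c=\sup_k\dimh\pi(E_k)$. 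Since, for $\bbP_p$-typical $\bo$, the words $(\sigma^n\bo)|_{\ell(n)}$ behave like independent $p$-samples, one expects (and the proof should confirm, in a suitable form) that $E_k$ essentially consists of the $\bi$ with $\liminf_{m\to\infty}\bigl(-\tfrac1m\log p_{\bi|_m}\bigr)\geq\alpha$; the pressure $P_\alpha(q)$ with $q<0$ is exactly the Legendre transform computing the dimension of this level set, and the endpoint behaviour of $q\mapsto P_\alpha(q)$ on $(-\infty,0)$ accounts for the two further assertions.

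\textbf{Upper bound.} Fix $k$. For each $m$, the set $\pi(E_k)$ is covered by the images $f_{\bj}(\Lambda)$ of those cylinders $[\bj]$, $|\bj|=m$, that survive the (finitely many) forbidden cylinders of length at most $m$. The key input is a large-deviation estimate: using the hypothesis \eqref{eq:condonl} and the monotonicity of $\ell$ one extracts a subsequence of lengths $v$ along which the $\asymp e^{\alpha v}$ forbidden cylinders of length $v$ --- indexed by the $n$ with $\ell(n)=v$, and hence $p$-random --- already contain every word of $p$-perplexity below $\alpha$ (each such word has $\bbP_p$-measure $\gtrsim e^{-\alpha v}$, so is sampled with overwhelming probability; a union bound over the at most $N^{v}$ words and a Borel--Cantelli step make this hold for $\bbP_p$-a.e.\ $\bo$). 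Consequently, for $m$ in that subsequence, every surviving $\bj$ satisfies $p_{\bj}\leq e^{-(\alpha-\varepsilon)m}$ for a prescribed $\varepsilon>0$, and since $q<0$ the Chernoff-type inequality $\lambda_{\bj}^{s}\leq\lambda_{\bj}^{s}\bigl(p_{\bj}e^{(\alpha-\varepsilon)m}\bigr)^{q}$ gives
\[
  \sum_{\bj\ \text{surviving}}\lambda_{\bj}^{s}
   \ \leq\ \sum_{|\bj|=m}\lambda_{\bj}^{s}\bigl(p_{\bj}e^{(\alpha-\varepsilon)m}\bigr)^{q}
   \ =\ \Bigl(\sum_{i=1}^{N}\lambda_{i}^{s}(p_{i}e^{\alpha-\varepsilon})^{q}\Bigr)^{m},
\]
which tends to $0$ along the subsequence whenever $s>P_{\alpha-\varepsilon}(q)$. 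Hence $\cH^{s}(\pi(E_k))=0$ for such $s$; letting $\varepsilon\to0$ and optimising over $q<0$ yields $\dimh\pi(E_k)\leq t(\alpha)$, and therefore $\dimh R(\bo,\ell)^c\leq t(\alpha)$.

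\textbf{Lower bound.} Assume $\alpha$ lies in the range where the infimum in \eqref{eq:salpha2} is attained at an interior $q^{*}<0$ (outside this range the bound $\dimh R(\bo,\ell)^c\geq t(\alpha)$ follows from the two further assertions below). Put $s^{*}=t(\alpha)=P_\alpha(q^{*})$ and $w^{*}_{i}=\lambda_{i}^{s^{*}}(p_{i}e^{\alpha})^{q^{*}}$, so $\sum_i w^{*}_i=1$; the first-order condition at $q^{*}$ is $-\sum_i w^{*}_i\log p_i=\alpha$, from which a short computation shows that the Bernoulli measure $\bbP_{w^{*}}$ has $\bbP_{w^{*}}$-a.e.\ local dimension $s^{*}$. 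It now suffices to prove that, for $\bbP_p$-a.e.\ $\bo$ and all large $k$, $\bbP_{w^{*}}\bigl(\bigcup_{n\geq k}[(\sigma^n\bo)|_{\ell(n)}]\bigr)<1$, so that $\bbP_{w^{*}}(E_k)>0$: conditioning $\bbP_{w^{*}}$ on $E_k$ then gives a measure supported (up to $\bbP_{w^{*}}$-null multiply-coded points) on $\pi(E_k)\cap R(\bo,\ell)^c$ with a.e.\ local dimension $s^{*}$, whence $\dimh\pi(E_k)\geq s^{*}$. Since $(\sigma^n\bo)|_{\ell(n)}$ is a $p$-typical word, $\bbP_{w^{*}}\bigl([(\sigma^n\bo)|_{\ell(n)}]\bigr)\approx e^{\ell(n)\sum_i p_i\log w^{*}_i}$, and the choice of $q^{*}$ makes $\sum_n\bbP_{w^{*}}\bigl([(\sigma^n\bo)|_{\ell(n)}]\bigr)$ finite once $\ell(n)\geq\bigl(\tfrac1\alpha-o(1)\bigr)\log n$. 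This crude first-moment bound is, however, not robust over the whole parameter range, because once $\alpha$ is large the forbidden cylinders of a given length are highly repetitive; the remedy --- and the step I expect to be the main obstacle, as in the analogous shrinking-target problem with inhomogeneous contractions --- is to set up a martingale tracking the $\bbP_{w^{*}}$-mass successively removed by the forbidden cylinders and to prove it converges to a strictly positive limit almost surely, thereby establishing $\bbP_{w^{*}}(E_k)>0$ and, with more care, controlling the local dimension of the conditioned measure.

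\textbf{The two further assertions.} Suppose first $\alpha<-\sum_{i}\lambda_{i}^{s_0}\log p_{i}$. By comparability of $\cH^{s_0}|_{\Lambda}$ with the self-similar measure $\bbP_{s_0}$ of weights $\lambda_i^{s_0}$, the claim $\cH^{s_0}(R(\bo,\ell)^c)=\cH^{s_0}(\Lambda)$ reduces to $\bbP_{s_0}(R(\bo,\ell))=0$. Fix a $\bbP_{s_0}$-typical $\bi$; then $-\tfrac1m\log p_{\bi|_m}\to-\sum_i\lambda_i^{s_0}\log p_i$, so
\[
  \bbE_{\bo}\Bigl[\#\bigl\{n : \bi|_{\ell(n)}=(\sigma^n\bo)|_{\ell(n)}\bigr\}\Bigr]=\sum_{n}p_{\bi|_{\ell(n)}}<\infty
\]
whenever $\alpha<-\sum_i\lambda_i^{s_0}\log p_i$ (using $\ell(n)\geq(\tfrac1\alpha-o(1))\log n$); hence $\bi$ is covered only finitely often for $\bbP_p$-a.e.\ $\bo$, and Fubini's theorem gives the conclusion for $\bbP_p$-a.e.\ $\bo$. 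Suppose finally $\alpha>-\log p_{\min}$; we show that $\bbP_p$-a.s.\ every $\pi(\bi)$ is covered infinitely often, so $R(\bo,\ell)^c=\emptyset$. It suffices that, for infinitely many lengths $v$, the block $\{n:\ell(n)=v\}$ of $\bo$ realises \emph{every} length-$v$ word. The monotonicity of $\ell$ and \eqref{eq:condonl} force the relevant block lengths to exceed $e^{(\alpha-o(1))v}$ for infinitely many $v$ (equivalently $\sum_n p_{\min}^{\ell(n)}=\infty$, via Abel summation), so that the probability that some length-$v$ word is missed is at most $N^{v}\exp\bigl(-c\,p_{\min}^{v}\,e^{(\alpha-o(1))v}\bigr)$, which is summable in $v$. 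A Borel--Cantelli argument over $\bo$, exploiting independence of well-separated windows, completes the proof.
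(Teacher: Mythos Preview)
Your upper bound and the two further assertions are essentially the paper's arguments: the covering estimate after showing all ``low-perplexity'' words are hit is a repackaging of the paper's decomposition in \cref{thm:ubmain2} (there the expectation over $\bo$ is taken first, and the partition $\cG'^\epsilon_n\cup\cV'^\epsilon_n$ plays the role of your perplexity threshold); the claim $\cH^{s_0}(R(\bo,\ell)^c)=\cH^{s_0}(\Lambda)$ for $\alpha<\alpha_1$ is obtained in the paper simply from $\dim_H R(\bo,\ell)<s_0$ (\cref{thm:spectrumregion}), but your Fubini argument is equally valid; and your argument for $\alpha>\alpha_2$ matches \cref{thm:fullCover}.

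The lower bound is where you go astray. You fix a $\bbP_p$-typical $\bo$ and try to estimate $\sum_n \bbP_{w^*}\bigl([(\sigma^n\bo)|_{\ell(n)}]\bigr)$, notice this first-moment bound is fragile, and then conjecture that an intricate martingale construction (analogous to \cref{sec:lowersmall}) is needed. It is not. The paper (\cref{thm:berR0}) takes the expectation over~$\bo$ \emph{before} summing: for any Bernoulli measure $\nu$ with $\sum_i q_i\log p_i+\alpha<0$ one has
\[
\bbE_\bo\Bigl[\sum_{n}\sum_{|\bi|=n}\nu([\bi])\,\mathbb 1\{\bi\text{ visited}\}\Bigr]
\le \sum_n n\sum_{|\bi|=n}\nu([\bi])\bigl(1-(1-p_{\bi})^{m(n)}\bigr),
\]
and splitting this sum at $p_{\bi}m(n)\approx 1$ (one half bounded by $(1-\epsilon)^n$, the other by a large-deviation tail for $\nu$) makes it converge. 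Borel--Cantelli then gives $\nu(R(\bo,\ell))=0$ for $\bbP_p$-a.e.~$\bo$, so $\nu$ sits entirely on $R(\bo,\ell)^c$. Optimising over such $\nu$ via \cref{thm:varprinc} yields $\dim_H R(\bo,\ell)^c\ge\inf_{q<0}P_\alpha(q)$. The martingale machinery of \cref{sec:lowersmall} is needed only for \cref{thm:main} in the regime $\alpha\le\alpha_0$, where no Bernoulli measure concentrates on $R(\bo,\ell)$; for the complement $R(\bo,\ell)^c$ the correct Bernoulli measures are available and the argument is elementary.
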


The proofs fall into, broadly, two separate categories. For large $\alpha$, as described in
\cref{sec:upper,sec:lowerlarge}, we rely on  covering arguments and the Borel-Cantelli lemma, as
well as the description of the multifractal spectrum, provided to us by a probabilistic pressure
formula (see \cref{eq:probpressure} underneath). For small $\alpha$, in \cref{sec:lowersmall}, the
proof techniques are much more involved, and require sophisticated arguments involving martingale
and large deviation estimates.

\subsection{Discussion for different values of \texorpdfstring{$\alpha$}{alpha}}\label{sec:discuss}
We now discuss the results of \cref{thm:main} and \cref{thm:main2} in more detail. In
particular, the purpose of this section is to make sense of the dimension formula and present the
phase transitions that the size of the dynamical covering set goes through as $\alpha$ varies. We
identify the four distinct regions of $\alpha$ for the different sizes of $R(\bo,\ell)$, and the
corresponding behaviour of $R(\bo, \ell)^c$. Furthermore, at the end of this subsection we present
figures showing examples of the possible behaviours of $R(\bo, \ell)$ as a function of $\alpha$. 

Let $P_\alpha(q)$ be the
unique solution of
$$
\sum_{i=1}^N \lambda_i^{P_{\alpha}(q)}(p_ie^\alpha)^q=1.
$$
We refer to \cref{sec:comparison} for a proof of the existence of $P_\alpha(q)$. We will show that
as $\alpha$ increases, the behaviour of the sizes of the sets $R(\bo, \ell)$ and $R(\bo, \ell)^{c}$
goes through phase transition at the values 
\[
  \alpha_0=-\sum_{i=1}^N\lambda_i^{P_{\alpha_0}(1)}p_ie^{\alpha_0} \log p_i, \quad
  \alpha_1=-\sum_i\lambda_i^{s_0}\log p_i, \quad \text{and} \quad \alpha_2=-\log p_{\min}. 
\]
The first region corresponds to ``small'' values of $\alpha\in (0, \alpha_0]$ and corresponds to the
``pressure part''. Namely, there is a unique value $\alpha_0\in(0,\infty)$ such that
$\alpha_0=\allowbreak-\sum_{i=1}^N\lambda_i^{P_{\alpha_0}(1)}p_ie^{\alpha_0} \log p_i$ and
$s(\alpha)=P_\alpha(1)$ holds for every $\alpha\in(0,\alpha_0]$. In particular, if 
$$
\alpha<-\sum_{i=1}^N\lambda_i^{P_\alpha(1)}p_ie^\alpha \log p_i
\quad\text{ then }\quad
s(\alpha)=P_\alpha(1).
$$
In the special case when
$\lambda_i\equiv\lambda$, we have that $\alpha_0=-\sum_ip_i\log p_i$ and $s(\alpha)=\frac{\alpha}{-\log\lambda}$ for
$\alpha\in(0,\alpha_0]$.

The second region corresponds to the ``spectral behaviour part''. More precisely, for every
$\alpha\in[\alpha_0,\alpha_1]$, there exists a unique $q=q(\alpha)\in[0,1]$
such that almost surely,
$$
\dimh R(\bo,\ell)=P_{\alpha}(q(\alpha))=\inf_{q\in\bbR}P_\alpha(q)=\sup\left\{\frac{-\sum_iq_i\log
q_i}{-\sum_iq_i\log\lambda_i}:-\sum_iq_i\log p_i\leq\alpha\right\}.
$$ This optimisation resembles the multifractal spectrum, and it indeed coincides with the Legendre
transform of the $L^q$-dimension of $\bbP_p$ if $\lambda_i\equiv\lambda$, which is the special
dynamical case considered by Liao and Seuret
\cite{LiaoSeuret2013}. For $\alpha\in (0, \alpha_1]$, the dimension satisfies
$\dim_HR(\bo,\ell)<s_0$. We note
here that for all values $\alpha<\alpha_1$, the complementary set $R(\bo, \ell)^c$ has full
$\cH^{s_0}$-measure in $\Lambda$. 

The next two regions are the ``large'' $\alpha$ cases when typical orbits cover
$\cH^{s_0}$-almost every point in $\Lambda$. However, in the region $\alpha\in(\alpha_1, \alpha_2)$,
almost surely the whole set $\Lambda$ is not covered,
whereas if $\alpha>\alpha_2$ then every point in $\Lambda$ is covered $\bbP_p$-almost surely
infinitely often. This solves the analogue of Dvoretzky's covering problem in the self-similar
setting. In the region $\alpha_2>\alpha>\alpha_1$, the
Hausdorff measure of the complementary set $R(\bo,\ell)^c$ is zero and, in particular, its Hausdorff
dimension is given by the ``decreasing'' part of the spectrum corresponding to negative $q$ values
$$
\dimh R(\bo,\ell)^c=\inf_{q<0}P_\alpha(q)=\inf_{q\in\bbR}P_\alpha(q)=\sup\left\{\frac{-\sum_iq_i\log
q_i}{-\sum_iq_i\log\lambda_i}:-\sum_iq_i\log p_i\geq\alpha\right\}
$$
almost surely. It is strictly smaller than $s_0$, see \cref{sec:upperforcomp,sec:lbver1}. Once
$\alpha_2<\alpha$, the set $R(\bo,\ell)^c$ is clearly empty.

Surprisingly, the $s(\alpha)$ defined in \cref{eq:salpha} has an equivalent characterisation as the unique root of
\begin{equation}\label{eq:probpres}
  \limsup_{n\to \infty} \tfrac 1n \log \sum_{|\bj|=n}\lambda_{\bj}^{s}\left(1-(1-p_{\bj})^{e^{\alpha n}}\right)=0,
\end{equation}
see \cref{sec:comparison}. This alternative form of $s(\alpha)$ plays an important role in the proof
of \cref{thm:main}: It is more geometric in nature and gives a natural description of coverings by
taking into account the multiplicity of the visits of cylinders by typical $\bbP_p$-orbits.

In the figures underneath we depict the possible behaviours of the sizes of $R(\bo, \ell)$ and
$R(\bo,\ell)^c$ as $\alpha$ varies, for different types of  iterated function systems. In the first figure,
\cref{fig:ex2}, we present a generic image describing the size of the  dynamical covering set
$R(\bo, \ell)$. The dimension value passes from convex (green) to concave (yellow) at $\alpha_0$, eventually ending
in a plateau (red) at the dimension of the whole self-similar set $\Lambda$ after $\alpha_1$. The figures
also show the final phase transition where $R(\bo, \ell)$ becomes all of $\Lambda$ at $\alpha_2$
(dotted to solid line).
The dimension value of its complement $R(\bo,\ell)^c$ is shown in black and behaves dually to the
dimension of $R(\bo,\ell)$, being maximal until $\alpha_1$ after which it follows the concave
region.

In \cref{fig:ex1}, we present the Hausdorff dimensions of $R(\bo, \ell)$ and $R(\bo,\ell)^c$ in the case when the
distribution of $\bo$ follows the dimension maximising measure. Note that the second phase
transition $\alpha_1$ is the smallest by using the dimension maximising measure. In other words,
it is the case where the dynamical covering set covers the full Hausdorff measure set with the fastest speed.

In the example presented in \cref{fig:homogeneous}, unlike in the other examples depicted, the shape
of the dimension spectrum is the same as in the work of \cite{LiaoSeuret2013}. This is due to the
fact that in this example, the IFS is homogeneous, which implies that the radii of the balls in the
dynamical covering do not depend on the orbit $\bo$. In particular, the dimension of $R(\bo, \ell)$
is linear in $\alpha$ for $\alpha<\alpha_0$.

The last figure, \cref{fig:degenerate}, shows that it is possible for the three critical values of $\alpha$ to
coincide, \ie $\alpha_0=\alpha_1=\alpha_2$. In this example, the dimension value is given by
$P_\alpha(1)$ and furthermore, as soon as $\dim_H R(\bo, \ell)$ is equal to the dimension of
$\Lambda$, also $R(\bo, \ell)=\Lambda$. This is due to the fact that the Bernoulli measure has equal
probabilities. Clearly, the phase transition $\alpha_2$ is the smallest for the uniform
distribution, \ie the dynamical covering set covers the whole set with the fastest speed.
Conversely, the dimension of $R(\bo,\ell)^c$ is maximal until the phase transition after which
$R(\bo,\ell)^c = \varnothing$.

Finally, we briefly comment on the nature of the phase transitions $\alpha_0$ and $\alpha_1$. Since
the functions $g_1:\alpha\mapsto P_\alpha(1)$ and $g_2:\alpha\mapsto \inf_{q\in\bbR} P_\alpha(q)$
are real analytic functions which are convex
and concave, respectively, we have $g_1(\alpha)\geq g_2(\alpha)$ whenever defined, and the phase transition occurs at
$\alpha_0$, when both functions coincide, we must necessarily have that their derivatives agree at
$\alpha_0$.
Again by convexity and concavity, the second derivatives are strictly positive and negative,
respectively, and the phase transition is of second order. The only exception is when the Legendre
transform is trivial and a singleton (and $\alpha_0 = \alpha_1 = \alpha_2$). In this case the derivative of the
first function is strictly positive, whereas the second region is of derivative zero. Here we obtain
a first order phase transition.

\definecolor{col1}{HTML}{20a39e}
\definecolor{col2}{HTML}{ffba49}
\definecolor{col3}{HTML}{ef5b5b}
\definecolor{col4}{HTML}{888888}
\definecolor{col5}{HTML}{000088}

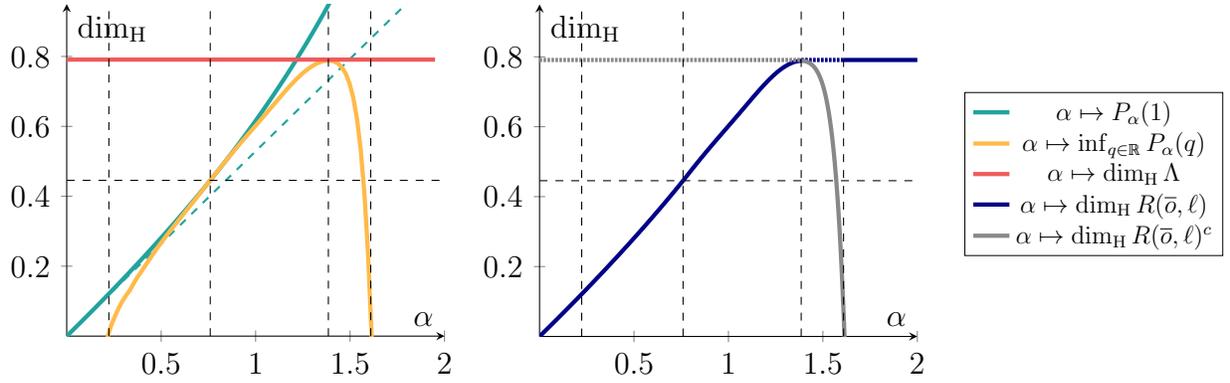
\begin{figure}[p] 
  \begin{center}
    \begin{tikzpicture}
      \begin{axis}[
	domain=0:1.95,
	samples=100,
	axis lines=middle,
	xlabel={$\alpha$}, ylabel={$\dimh$},
	xmin=0, xmax=2, ymin=0, ymax=0.95,
	width=0.4\textwidth, height=6cm
	]

	\pgfmathsetmacro{\sZero}{0.791002}
	\pgfmathsetmacro{\sOne}{0.445581}
	\pgfmathsetmacro{\tOne}{1.38513}
	\pgfmathsetmacro{\tTwo}{1.60944}
	\pgfmathsetmacro{\tThree}{0.223144}
	\pgfmathsetmacro{\tFour}{0.759745}

	\addplot[col1, ultra thick] {0.0000429021 + 0.528817 *x + 0.0595684 *x^2 + 0.00187359* x^3 +
	  0.0258721* x^4};
	\addplot[col1, thick, dashed] { 0.528817 *x };
	\addplot[col2, ultra thick] {0.788466 + 0.00611217* (x - 1.38513) - 2.69672* (x - 1.38513)^2 -
	    10.6174* (x - 1.38513)^3 - 65.167* (x - 1.38513)^4 -
	    292.99 *(x - 1.38513)^5 - 754.17* (x - 1.38513)^6 -
	    1123.49 *(x - 1.38513)^7 - 965.325* (x - 1.38513)^8 -
	  444.938* (x - 1.38513)^9 - 85.2819* (x - 1.38513)^10};
	\addplot[col3,ultra thick]{\sZero};
	\addplot[dashed] coordinates {(\tOne,0) (\tOne,1)};
	\addplot[dashed] coordinates {(\tTwo,0) (\tTwo,1)};
	\addplot[dashed] coordinates {(\tThree,0) (\tThree,1)};
	\addplot[dashed] coordinates {(\tFour,0) (\tFour,1)};
	\addplot[dashed] coordinates {(0,\sOne) (2,\sOne)};
      \end{axis}
    \end{tikzpicture}
    \begin{tikzpicture}
      \begin{axis}[
	domain=0:1.7,
	samples=100,
	axis lines=middle,
	xlabel={$\alpha$}, ylabel={$\dimh$},
	xmin=0, xmax=2, ymin=0, ymax=0.95,
	width=0.4\textwidth, height=6cm
	]

	\pgfmathsetmacro{\sZero}{0.791002}
	\pgfmathsetmacro{\sOne}{0.445581}
	\pgfmathsetmacro{\tOne}{1.38513}
	\pgfmathsetmacro{\tTwo}{1.60944}
	\pgfmathsetmacro{\tThree}{0.223144}
	\pgfmathsetmacro{\tFour}{0.759745}

	\addplot[col5, ultra thick,domain=0:\tFour] {0.0000429021 + 0.528817 *x + 0.0595684 *x^2 + 0.00187359* x^3 +
	  0.0258721* x^4};
	\addplot[col5, ultra thick,domain=\tFour:\tOne] {0.788466 + 0.00611217* (x - 1.38513) -
	    2.69672* (x - 1.38513)^2 -
	    10.6174* (x - 1.38513)^3 - 65.167* (x - 1.38513)^4 -
	    292.99 *(x - 1.38513)^5 - 754.17* (x - 1.38513)^6 -
	    1123.49 *(x - 1.38513)^7 - 965.325* (x - 1.38513)^8 -
	  444.938* (x - 1.38513)^9 - 85.2819* (x - 1.38513)^10};
	\addplot[col5,ultra thick,dash pattern=on 1pt off .5pt,domain=\tOne:\tTwo]{\sZero};
	\addplot[col5,ultra thick,domain=\tTwo:2]{\sZero};

	\addplot[col4, ultra thick,domain=0:\tOne,dash pattern=on 1pt off .5pt] {\sZero};
	\addplot[col4, ultra thick,domain=\tOne:\tTwo+0.1] {0.788466 + 0.00611217* (x - 1.38513) -
	    2.69672* (x - 1.38513)^2 -
	    10.6174* (x - 1.38513)^3 - 65.167* (x - 1.38513)^4 -
	    292.99 *(x - 1.38513)^5 - 754.17* (x - 1.38513)^6 -
	    1123.49 *(x - 1.38513)^7 - 965.325* (x - 1.38513)^8 -
	  444.938* (x - 1.38513)^9 - 85.2819* (x - 1.38513)^10};

	\addplot[dashed] coordinates {(\tOne,0) (\tOne,1)};
	\addplot[dashed] coordinates {(\tTwo,0) (\tTwo,1)};
	\addplot[dashed] coordinates {(\tThree,0) (\tThree,1)};
	\addplot[dashed] coordinates {(\tFour,0) (\tFour,1)};
	\addplot[dashed] coordinates {(0,\sOne) (2,\sOne)};

      \end{axis}
    \end{tikzpicture}
    \raisebox{1.7cm}{
      \scalebox{0.8}{
	\begin{tikzpicture}
	  \begin{customlegend}[legend entries={$\alpha\mapsto P_\alpha(1)$,$\alpha\mapsto
	      \inf_{q\in\bbR}P_\alpha(q)$,$\alpha\mapsto\dimh\Lambda$,{$\alpha\mapsto\dimh
	    R(\bo,\ell)$},{$\alpha\mapsto\dimh R(\bo,\ell)^c$}}]
	    \addlegendimage{col1,line width=2 pt,sharp plot}
	    \addlegendimage{col2,line width=2 pt,sharp plot}
	    \addlegendimage{col3,line width=2 pt,sharp plot}
	    \addlegendimage{col5,line width=2 pt,sharp plot}
	    \addlegendimage{col4,line width=2 pt,sharp plot}
	  \end{customlegend}
	\end{tikzpicture}
      }
    }
    \caption{Example with IFS consisting of two maps, $f_1:x\mapsto 0.8x$
    and $f_2: x\mapsto 0.1x+0.9$. The probabilities are $p_1=0.2$ and $p_2=0.8$. }\label{fig:ex2}
  \end{center}
\end{figure}

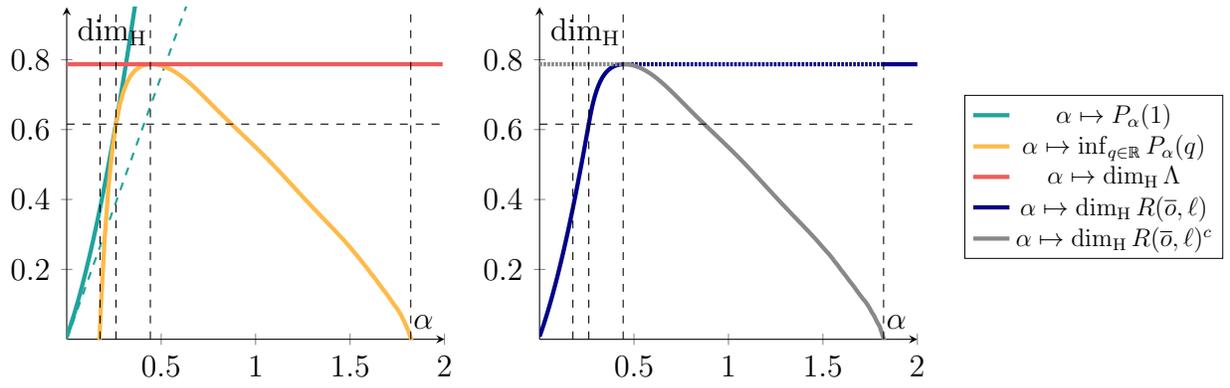
\begin{figure}[p]
  \begin{center}
    \begin{tikzpicture}
      \begin{axis}[
	domain=0:1.99,
	samples=100,
	axis lines=middle,
	xlabel={$\alpha$}, ylabel={$\dimh$},
	xmin=0, xmax=2, ymin=0, ymax=0.95,
	width=0.4\textwidth, height=6cm
	]
	\pgfmathsetmacro{\sZero}{0.786971}
	\pgfmathsetmacro{\sOne}{0.615567}
	\pgfmathsetmacro{\tOne}{0.442654}
	\pgfmathsetmacro{\tTwo}{1.82135}
	\pgfmathsetmacro{\tThree}{0.176507}
	\pgfmathsetmacro{\tFour}{0.260725}

	\addplot[col1, ultra thick] {0.00774617 + 1.50563 *x + 3.45146* x^2 - 0.910021* x^3 - 0.377264* x^4};
	\addplot[col1, thick, dashed] { 1.50563 *x };
	\addplot[col2, ultra thick] {0.786971 + 0.0210774* (-0.442654 + x) - 1.7915* (-0.442654 + x)^2 +
	    4.93679 *(-0.442654 + x)^3 - 32.0515* (-0.442654 + x)^4 +
	    141.756 *(-0.442654 + x)^5 - 329.497* (-0.442654 + x)^6 +
	    428.456 *(-0.442654 + x)^7 - 316.514* (-0.442654 + x)^8 +
	  124.451 *(-0.442654 + x)^9 - 20.2559* (-0.442654 + x)^10};
	\addplot[col3,ultra thick]{\sZero};
	\addplot[dashed] coordinates {(\tOne,0) (\tOne,1)};
	\addplot[dashed] coordinates {(\tTwo,0) (\tTwo,1)};
	\addplot[dashed] coordinates {(\tThree,0) (\tThree,1)};
	\addplot[dashed] coordinates {(\tFour,0) (\tFour,1)};
	\addplot[dashed] coordinates {(0,\sOne) (2,\sOne)};
      \end{axis}
    \end{tikzpicture}
    \begin{tikzpicture}
      \begin{axis}[
	domain=0:1.7,
	samples=100,
	axis lines=middle,
	xlabel={$\alpha$}, ylabel={$\dimh$},
	xmin=0, xmax=2, ymin=0, ymax=0.95,
	width=0.40\textwidth, height=6cm
	]
	\pgfmathsetmacro{\sZero}{0.786971}
	\pgfmathsetmacro{\sOne}{0.615567}
	\pgfmathsetmacro{\tOne}{0.442654}
	\pgfmathsetmacro{\tTwo}{1.82135}
	\pgfmathsetmacro{\tThree}{0.176507}
	\pgfmathsetmacro{\tFour}{0.260725}

	\addplot[col5, ultra thick,domain=0:\tFour] {0.00774617 + 1.50563 *x + 3.45146* x^2 - 0.910021* x^3 - 0.377264* x^4};
	\addplot[col5, ultra thick,domain=\tFour:\tOne] {0.786971 + 0.0210774* (-0.442654 + x) - 1.7915* (-0.442654 + x)^2 +
	    4.93679 *(-0.442654 + x)^3 - 32.0515* (-0.442654 + x)^4 +
	    141.756 *(-0.442654 + x)^5 - 329.497* (-0.442654 + x)^6 +
	    428.456 *(-0.442654 + x)^7 - 316.514* (-0.442654 + x)^8 +
	  124.451 *(-0.442654 + x)^9 - 20.2559* (-0.442654 + x)^10};
	\addplot[col5,ultra thick,dash pattern=on 1pt off .5pt,domain=\tOne:\tTwo]{\sZero};
	\addplot[col5,ultra thick,domain=\tTwo:2]{\sZero};

	\addplot[col4, ultra thick,domain=0:\tOne,dash pattern=on 1pt off .5pt] {\sZero};
	\addplot[col4, ultra thick,domain=\tOne:\tTwo+0.1] {0.786971 + 0.0210774* (-0.442654 + x) - 1.7915* (-0.442654 + x)^2 +
	    4.93679 *(-0.442654 + x)^3 - 32.0515* (-0.442654 + x)^4 +
	    141.756 *(-0.442654 + x)^5 - 329.497* (-0.442654 + x)^6 +
	    428.456 *(-0.442654 + x)^7 - 316.514* (-0.442654 + x)^8 +
	  124.451 *(-0.442654 + x)^9 - 20.2559* (-0.442654 + x)^10};

	\addplot[dashed] coordinates {(\tOne,0) (\tOne,1)};
	\addplot[dashed] coordinates {(\tTwo,0) (\tTwo,1)};
	\addplot[dashed] coordinates {(\tThree,0) (\tThree,1)};
	\addplot[dashed] coordinates {(\tFour,0) (\tFour,1)};
	\addplot[dashed] coordinates {(0,\sOne) (2,\sOne)};
      \end{axis}
    \end{tikzpicture}
        \raisebox{1.7cm}{
      \scalebox{0.8}{
	\begin{tikzpicture}
	  \begin{customlegend}[legend entries={$\alpha\mapsto P_\alpha(1)$,$\alpha\mapsto
	      \inf_{q\in\bbR}P_\alpha(q)$,$\alpha\mapsto\dimh\Lambda$,{$\alpha\mapsto\dimh
	    R(\bo,\ell)$},{$\alpha\mapsto\dimh R(\bo,\ell)^c$}}]
	    \addlegendimage{col1,line width=2 pt,sharp plot}
	    \addlegendimage{col2,line width=2 pt,sharp plot}
	    \addlegendimage{col3,line width=2 pt,sharp plot}
	    \addlegendimage{col5,line width=2 pt,sharp plot}
	    \addlegendimage{col4,line width=2 pt,sharp plot}
	  \end{customlegend}
	\end{tikzpicture}
      }
    }
    \caption{Example with IFS $f_1: x \mapsto 0.8x$ and $f_2: x \mapsto 0.1x+0.9$. The probabilities
      are $p_1=0.8^s\approx 0.838$ and $p_2=0.1^s\approx 0.162$,
      where $s=\dimh \Lambda$ (the dimension
    maximising measure).}\label{fig:ex1}
  \end{center}
\end{figure}
\begin{figure}[p]
  \begin{center}
    \begin{tikzpicture}
      \begin{axis}[
	domain=0:1.95,
	samples=100,
	axis lines=middle,
	xlabel={$\alpha$}, ylabel={$\dimh$},
	xmin=0, xmax=2, ymin=0, ymax=1.0,
	width=0.4\textwidth, height=6cm
	]
	\pgfmathsetmacro{\sZero}{0.7565}
	\pgfmathsetmacro{\sOne}{0.6}
	\pgfmathsetmacro{\tOne}{0.9163}
	\pgfmathsetmacro{\tTwo}{1.55}
	\pgfmathsetmacro{\tThree}{0.3}
	\pgfmathsetmacro{\tFour}{0.6/1.06}

	\addplot[col1, ultra thick] {1.06 *x};
	\addplot[col2, ultra thick] {0.756864 - 0.000250806  *(x - 0.9163) - 1.18476 * (x - 0.9163)^2 -
	    0.00875389* (x - 0.9163)^3 - 0.688954* (x - 0.9163)^4 -
	    0.0594986 *(x - 0.9163)^5 - 8.56599 *(x - 0.9163)^6 +
	    0.136719 *(x - 0.9163)^7 + 25.0002 *(x - 0.9163)^8 -
	  0.0918132 *(x - 0.9163)^9 - 28.1464*(x - 0.9163)^10};

	\addplot[col3,ultra thick]{\sZero};
	\addplot[dashed] coordinates {(\tOne,0) (\tOne,1)};
	\addplot[dashed] coordinates {(\tTwo,0) (\tTwo,1)};
	\addplot[dashed] coordinates {(\tThree,0) (\tThree,1)};
	\addplot[dashed] coordinates {(\tFour,0) (\tFour,1)};
	\addplot[dashed] coordinates {(0,\sOne) (2,\sOne)};
      \end{axis}
    \end{tikzpicture}
    \begin{tikzpicture}
      \begin{axis}[
	domain=0:1.7,
	samples=100,
	axis lines=middle,
	xlabel={$\alpha$}, ylabel={$\dimh$},
	xmin=0, xmax=2, ymin=0, ymax=1.00,
	width=0.4\textwidth, height=6cm
	]
	\pgfmathsetmacro{\sZero}{0.7565}
	\pgfmathsetmacro{\sOne}{0.6}
	\pgfmathsetmacro{\tOne}{0.9163}
	\pgfmathsetmacro{\tTwo}{1.55}
	\pgfmathsetmacro{\tThree}{0.3}
	\pgfmathsetmacro{\tFour}{0.6/1.06}

	\addplot[col5, ultra thick,domain=0:\tFour] {1.06 *x};
	\addplot[col5, ultra thick,domain=\tFour:\tOne] {0.756864 - 0.000250806  *(x - 0.9163) -
	    1.18476 * (x - 0.9163)^2 -
	    0.00875389* (x - 0.9163)^3 - 0.688954* (x - 0.9163)^4 -
	    0.0594986 *(x - 0.9163)^5 - 8.56599 *(x - 0.9163)^6 +
	    0.136719 *(x - 0.9163)^7 + 25.0002 *(x - 0.9163)^8 -
	  0.0918132 *(x - 0.9163)^9 - 28.1464*(x - 0.9163)^10};
	\addplot[col5,ultra thick,dash pattern=on 1pt off .5pt,domain=\tOne:\tTwo]{\sZero};
	\addplot[col5,ultra thick,domain=\tTwo:2]{\sZero};

	\addplot[col4, ultra thick,domain=0:\tOne,dash pattern=on 1pt off .5pt] {\sZero};
	\addplot[col4, ultra thick,domain=\tOne:\tTwo+0.1] {0.756864 - 0.000250806  *(x - 0.9163) -
	    1.18476 * (x - 0.9163)^2 -
	    0.00875389* (x - 0.9163)^3 - 0.688954* (x - 0.9163)^4 -
	    0.0594986 *(x - 0.9163)^5 - 8.56599 *(x - 0.9163)^6 +
	    0.136719 *(x - 0.9163)^7 + 25.0002 *(x - 0.9163)^8 -
	  0.0918132 *(x - 0.9163)^9 - 28.1464*(x - 0.9163)^10};

	\addplot[dashed] coordinates {(\tOne,0) (\tOne,1)};
	\addplot[dashed] coordinates {(\tTwo,0) (\tTwo,1)};
	\addplot[dashed] coordinates {(\tThree,0) (\tThree,1)};
	\addplot[dashed] coordinates {(\tFour,0) (\tFour,1)};
	\addplot[dashed] coordinates {(0,\sOne) (2,\sOne)};
      \end{axis}
    \end{tikzpicture}
            \raisebox{1.7cm}{
      \scalebox{0.8}{
	\begin{tikzpicture}
	  \begin{customlegend}[legend entries={$\alpha\mapsto P_\alpha(1)$,$\alpha\mapsto
	      \inf_{q\in\bbR}P_\alpha(q)$,$\alpha\mapsto\dimh\Lambda$,{$\alpha\mapsto\dimh
	    R(\bo,\ell)$},{$\alpha\mapsto\dimh R(\bo,\ell)^c$}}]
	    \addlegendimage{col1,line width=2 pt,sharp plot}
	    \addlegendimage{col2,line width=2 pt,sharp plot}
	    \addlegendimage{col3,line width=2 pt,sharp plot}
	    \addlegendimage{col5,line width=2 pt,sharp plot}
	    \addlegendimage{col4,line width=2 pt,sharp plot}
	  \end{customlegend}
	\end{tikzpicture}
      }
    }
    \caption{Example with IFS $f_1:x\mapsto 0.4 x$ and $f_2: x\mapsto
      0.4x+0.6$. The probabilities are $p_1=0.2$ and $p_2=0.8$. The case
      coincides with the result of Liao and Seuret \cite{LiaoSeuret2013}.}
      \label{fig:homogeneous}
  \end{center}
\end{figure}
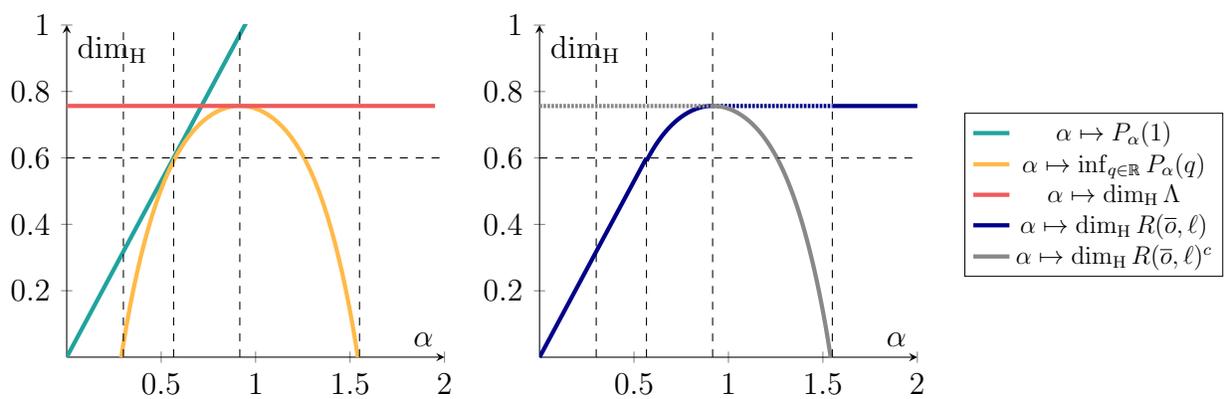

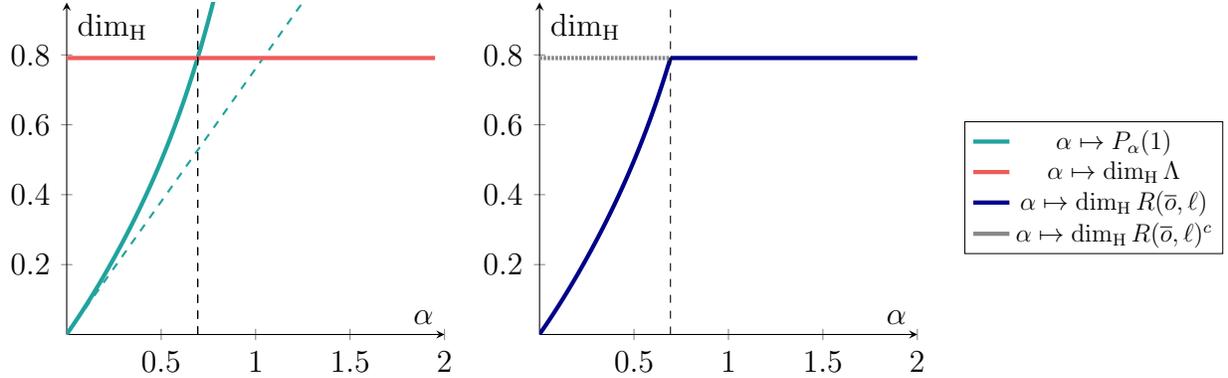
\begin{figure}[pt]
  \begin{center}
    \begin{tikzpicture}
      \begin{axis}[
	domain=0:1.95,
	samples=100,
	axis lines=middle,
	xlabel={$\alpha$}, ylabel={$\dimh$},
	xmin=0, xmax=2, ymin=0, ymax=0.95,
	width=0.4\textwidth, height=6cm
	]
	\pgfmathsetmacro{\sZero}{0.791002}
	\pgfmathsetmacro{\sOne}{0.791002}
	\pgfmathsetmacro{\tOne}{0.693147}
	\pgfmathsetmacro{\tTwo}{0.693147}
	\pgfmathsetmacro{\tThree}{0.693147}
	\pgfmathsetmacro{\tFour}{0.693147}

	\addplot[col1, ultra thick] {0.00101662 + 0.760572 *x + 0.496951* x^2 - 0.425069* x^3 + 0.717325* x^4};
	\addplot[col1, thick, dashed] { 0.760572 *x };
	\addplot[col3,ultra thick]{\sZero};
	\addplot[dashed] coordinates {(\tOne,0) (\tOne,1)};
	\addplot[dashed] coordinates {(\tTwo,0) (\tTwo,1)};
	\addplot[dashed] coordinates {(\tThree,0) (\tThree,1)};
	\addplot[dashed] coordinates {(\tFour,0) (\tFour,1)};
      \end{axis}
    \end{tikzpicture}
    \begin{tikzpicture}
      \begin{axis}[
	domain=0:1.7,
	samples=100,
	axis lines=middle,
	xlabel={$\alpha$}, ylabel={$\dimh$},
	xmin=0, xmax=2, ymin=0, ymax=0.95,
	width=0.4\textwidth, height=6cm
	]
	\pgfmathsetmacro{\sZero}{0.791002}
	\pgfmathsetmacro{\sOne}{0.791002}
	\pgfmathsetmacro{\tOne}{0.693147}
	\pgfmathsetmacro{\tTwo}{0.693147}
	\pgfmathsetmacro{\tThree}{0.693147}
	\pgfmathsetmacro{\tFour}{0.693147}

	\addplot[col5, ultra thick,domain=0:\tFour] {0.00101662 + 0.760572 *x + 0.496951* x^2 -
	  0.425069* x^3 + 0.717325* x^4};

	\addplot[col5,ultra thick,domain=\tTwo:2]{\sZero};

	\addplot[col4, ultra thick,domain=0:\tOne,dash pattern=on 1pt off .5pt] {\sZero};

	\addplot[dashed] coordinates {(\tOne,0) (\tOne,1)};
      \end{axis}
    \end{tikzpicture}
            \raisebox{1.7cm}{
      \scalebox{0.8}{
	\begin{tikzpicture}
	  \begin{customlegend}[legend entries={$\alpha\mapsto P_\alpha(1)$,$\alpha\mapsto\dimh\Lambda$,{$\alpha\mapsto\dimh
	    R(\bo,\ell)$},{$\alpha\mapsto\dimh R(\bo,\ell)^c$}}]
	    \addlegendimage{col1,line width=2 pt,sharp plot}
	    \addlegendimage{col3,line width=2 pt,sharp plot}
	    \addlegendimage{col5,line width=2 pt,sharp plot}
	    \addlegendimage{col4,line width=2 pt,sharp plot}
	  \end{customlegend}
	\end{tikzpicture}
      }
    }
    \caption{Example with IFS $f_1:x \mapsto 0.8x$ and $f_2: x \mapsto 0.1
      x+0.9$. The probabilities are equal $p_1=0.5=p_2$.
    }\label{fig:degenerate}
  \end{center}
\end{figure}

\subsection{Structure of the paper}

In \cref{sec:comparison} we prove that the dimension formula for $R(\bo, \ell)$ can be computed from
the probabilistic pressure \cref{eq:probpressure}, for all values of $\alpha$. This probabilistic
pressure is more closely related to the geometry of the set $R(\bo, \ell)$ than the function
$P_\alpha(q)$ and as such, is an important tool for obtaining the upper bound results in
\cref{sec:upper}. They are also key to the lower bound arguments in the low $\alpha$ regime,
$\alpha<\alpha_0$. In \cref{sec:upper}, we compute the dimension upper bounds for the dynamical
covering set and its complement, proving the upper bounds in \cref{thm:main,thm:main2}. These are
probabilistic covering arguments relying on the Borel-Cantelli lemma. 

In \cref{sec:lowerlarge}, we prove that 
$R(\bo,\ell)=\Lambda$ for almost all $\bo$ when $\alpha>\alpha_2$, see \cref{thm:fullCover}, that
$\mathcal H^{s_0}(R(\bo, \ell))=\mathcal H^{s_0}(\Lambda)$ for almost all $\bo$ when
$\alpha_1<\alpha<\alpha_2$, and that $\mathcal H^{s_0}(R(\bo, \ell)^c)=\mathcal H^{s_0}(\Lambda)$
for almost all $\bo$ when $\alpha_1>\alpha$.  This section also contains the proofs of the lower
bounds to $\dim_H R(\bo, \ell)$ for large $\alpha>\alpha_0$ and for $\dim_HR(\bo, \ell)^c$ for
$\alpha>\alpha_1$. The arguments in \cref{sec:lowerlarge} are similar to each other and to those in
\cref{sec:upper}, with the additional analysis of certain Bernoulli measures on $R(\bo, \ell)$ and
its complement.

The remainder of the paper is dedicated to the intricate proof of the lower bound to $\dim_H R(\bo, \ell)$ in
the case of small $\alpha\le\alpha_0$. This proof is contained in \cref{sec:lowersmall}. This
section begins with a construction of a Cantor set that is utilised for the lower bound proof, see
\cref{sec:cantor}. On this Cantor set, we construct a random measure, using martingale methods, see
\cref{sec:randommeasure}. We go on to prove that the random measure has some nice distribution
properties, see \cref{subsec:measure}. Finally, in \cref{subsec:energy} we complete the lower bound
proof through an energy estimate. The final short section \cref{sec:complete} proves the main
theorems by combining the results from the previous sections.

\paragraph{Acknowledgements:}
The authors would like to thank Amlan Banaji and Alex Rutar for discussions on the phase transitions
of the pressures.

\section{Preliminaries and the study of the probabilistic pressure}\label{sec:comparison}

This section focusses on analysis of the dimension value in \cref{thm:main,thm:main2}. In
particular, we show in \cref{thm:varprinc} that the number $P_{\alpha}(q)$ satisfies a variational
principle, and in \cref{thm:twospec} that it connects to the probabilistic pressure
\cref{eq:probpressure} defined underneath. These connections are crucial in the dimension estimates
of the later sections. 

We will repeatedly make use of the following well-known approximations throughout the whole paper.
For every $x\in\bbR$ and $n\in\bbN$
\begin{equation}\label{eq:repeat}
  (1-x)^n\ge 1-nx \quad\text{ and }\quad (1-x)^n\le e^{-xn}.
\end{equation}
Further, from a second order Taylor approximation, for every $x>0$ and $n\in\bbN$, we also always have
\begin{equation}\label{eq:taylor}
  (1-x)^n\le 1-nx+\frac{n^2x^2}{2}=1-nx(1-\frac{nx}2).
\end{equation}

First we prove the following proposition, which describes the regions discussed in
\cref{sec:discuss}. Let $p=(p_1,\ldots,p_N)$ be a probability vector such that $p_i>0$ for all
$i=1,\ldots,N$.

\begin{lemma}\label{thm:almostlegendre}
  Let $P_\alpha(q)$ be the unique solution of $\sum_{i=1}^N
  \lambda_i^{P_{\alpha}(q)}(p_ie^\alpha)^q=1.$ Then the map $q\mapsto P_\alpha(q)$ is continuous and convex.

  Moreover, if $p_i\neq p_j$ for some $i\neq j$ then the map
  \[
    \alpha\in[-\log p_{\max},-\log p_{\min}]\mapsto\inf_{q\in\bbR}P_\alpha(q)
  \]
  is also continuous, concave and its unique maximum is at $\alpha=- \sum_{i=1}^N \lambda_i^{s_0}\log p_i$.
\end{lemma}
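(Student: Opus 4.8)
The plan is to treat the four assertions in turn — existence/uniqueness, convexity and continuity in $q$, and then the three claims about $\alpha\mapsto\inf_{q}P_\alpha(q)$ — with concavity being the real obstacle.

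\emph{Existence, uniqueness, convexity and continuity in $q$.} For fixed $\alpha,q$ the map $s\mapsto\sum_{i=1}^N\lambda_i^s(p_ie^\alpha)^q$ is continuous, strictly decreasing on $\bbR$ (every $\lambda_i<1$), tends to $+\infty$ as $s\to-\infty$ and to $0$ as $s\to+\infty$, so $P_\alpha(q)$ exists and is unique; since its $s$‑derivative is nonzero, the implicit function theorem makes $(\alpha,q)\mapsto P_\alpha(q)$ real‑analytic, hence continuous. For convexity in $q$, put $q=tq_1+(1-t)q_2$, $s_j=P_\alpha(q_j)$; Hölder's inequality gives $\sum_i\lambda_i^{ts_1+(1-t)s_2}(p_ie^\alpha)^{q}=\sum_i\bigl(\lambda_i^{s_1}(p_ie^\alpha)^{q_1}\bigr)^{t}\bigl(\lambda_i^{s_2}(p_ie^\alpha)^{q_2}\bigr)^{1-t}\le1$, and monotonicity in $s$ yields $P_\alpha(q)\le tP_\alpha(q_1)+(1-t)P_\alpha(q_2)$. (The identical estimate shows $\alpha\mapsto P_\alpha(q)$ is convex for each fixed $q$, which is used elsewhere.)

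\emph{The infimum and its variational description.} Assume $p_{\min}<p_{\max}$, write $I=[-\log p_{\max},-\log p_{\min}]$ and $g(\alpha)=\inf_{q\in\bbR}P_\alpha(q)$. For $\alpha$ interior to $I$ we have $p_{\max}e^\alpha>1>p_{\min}e^\alpha$, and looking at the dominant term as $q\to\pm\infty$ in $\sum_i\lambda_i^{P_\alpha(q)}(p_ie^\alpha)^q=1$ forces $P_\alpha(q)\to+\infty$ in both directions, so the convex function $P_\alpha(\cdot)$ attains its infimum at some $q^{*}=q^{*}(\alpha)$. I would then prove the variational formula
\[
  g(\alpha)=\max\Bigl\{\tfrac{-\sum_i q_i\log q_i}{-\sum_i q_i\log\lambda_i}\ :\ \mathbf q\in\Delta,\ -\textstyle\sum_i q_i\log p_i=\alpha\Bigr\},
\]
$\Delta$ the probability simplex: ``$\ge$'' by applying Jensen to $0=\log\sum_i q_i\bigl(\lambda_i^{P_\alpha(r)}(p_ie^\alpha)^{r}/q_i\bigr)$ for arbitrary $r$ and using the constraint; ``$\le$'' and attainment by taking $r=q^{*}$ and $q_i=\lambda_i^{P_\alpha(q^{*})}(p_ie^\alpha)^{q^{*}}$, a probability vector for which Jensen is an equality, after checking that the stationarity $\partial_qP_\alpha(q^{*})=0$ is exactly the constraint $-\sum_i q_i\log p_i=\alpha$. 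Continuity of $g$ on all of $I$ — at the endpoints the constraint set is a single vertex and $g=0$ — then follows from this formula, the constraint correspondence being continuous (Berge's maximum theorem).

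\emph{Monotonicity and the maximum.} By the envelope theorem $g'(\alpha)=\partial_\alpha P_\alpha(q)\big|_{q=q^{*}(\alpha)}$. Implicit differentiation in $\alpha$ gives $\partial_\alpha P_\alpha(q)=q\big/\sum_i W_i(-\log\lambda_i)$ with $W_i=\lambda_i^{P_\alpha(q)}(p_ie^\alpha)^q$, and the denominator is positive, so $\sgn g'(\alpha)=\sgn q^{*}(\alpha)$. Differentiating in $q$ at $q=0$ (where $P_\alpha(0)=s_0$) gives $\partial_qP_\alpha(0)=(\alpha_1-\alpha)\big/\sum_i\lambda_i^{s_0}\log\lambda_i$ with $\alpha_1:=-\sum_i\lambda_i^{s_0}\log p_i$; the denominator is negative, so $\partial_qP_\alpha(0)$ has the sign of $\alpha-\alpha_1$, and convexity of $P_\alpha(\cdot)$ then forces $q^{*}(\alpha)>0$ for $\alpha<\alpha_1$, $q^{*}(\alpha)<0$ for $\alpha>\alpha_1$, $q^{*}(\alpha_1)=0$. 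Hence $g$ strictly increases on $[-\log p_{\max},\alpha_1]$, strictly decreases on $[\alpha_1,-\log p_{\min}]$, and has its unique maximum $g(\alpha_1)=P_{\alpha_1}(0)=s_0$ at $\alpha_1=-\sum_i\lambda_i^{s_0}\log p_i$.

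\emph{Concavity — the main obstacle.} On the interior of $I$ the minimiser $q^{*}(\alpha)$ is non‑degenerate ($\partial_q^2P_\alpha(q^{*})>0$, using $p_{\min}<p_{\max}$), so $g$ is real‑analytic and it suffices to prove $g''\le0$. Differentiating the relations that define $(P_\alpha(q),q^{*})$ and substituting into $g''=\tfrac{d}{d\alpha}\bigl(\partial_\alpha P_\alpha(q^{*}(\alpha))\bigr)$, one obtains after simplification
\[
  g''(\alpha)=\frac{(q^{*})^2\operatorname{Var}_W(a)\operatorname{Var}_W(b)-\bigl(\langle a\rangle_W-q^{*}\operatorname{Cov}_W(a,b)\bigr)^2}{\langle a\rangle_W^{3}\,\operatorname{Var}_W(b)},
\]
where $a_i=-\log\lambda_i$, $b_i=\log p_i+\alpha$, and $\langle\cdot\rangle_W$, $\operatorname{Var}_W$, $\operatorname{Cov}_W$ are moments of the probability vector $W_i=\lambda_i^{g(\alpha)}(p_ie^\alpha)^{q^{*}}$ (for which $\langle b\rangle_W=0$ by stationarity). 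Since $\langle a\rangle_W>0$ and $\operatorname{Var}_W(b)>0$, everything reduces to the inequality
\[
  (q^{*})^2\operatorname{Var}_W(a)\operatorname{Var}_W(b)\ \le\ \bigl(\langle a\rangle_W-q^{*}\operatorname{Cov}_W(a,b)\bigr)^2 ,
\]
equivalently $|q^{*}|\sqrt{\operatorname{Var}_W(a)\operatorname{Var}_W(b)}\le\langle a(1-q^{*}b)\rangle_W$. I expect this to be where the real work lies; I would prove it by combining the two identities pinning down $(g(\alpha),q^{*})$ — namely $\sum_iW_i=1$ and $\langle b\rangle_W=0$ — with the Cauchy–Schwarz inequality for the covariance matrix of $(a_i,b_i)$ under $W$, using that the Gibbs shape $W_i\propto\lambda_i^{g(\alpha)}p_i^{q^{*}}$ makes $1-q^{*}b_i$ affine in $\log W_i+g(\alpha)a_i$, which is what controls the cross term. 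A cleaner‑looking reformulation is that $g(\alpha)$ is the unique zero in $s$ of $(s,\alpha)\mapsto\inf_q\{\log\sum_i\lambda_i^sp_i^q+q\alpha\}$, convex and decreasing in $s$ and concave in $\alpha$, so concavity of $g$ is the convexity of $\{(\alpha,s):\log\sum_i\lambda_i^sp_i^q+q\alpha\ge0\text{ for all }q\}$; but the bilinear term $q\alpha$ keeps this from being automatic, so I would ultimately fall back on the second‑order computation above.
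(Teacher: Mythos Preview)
Your existence, uniqueness and convexity arguments are correct; the paper does convexity by computing $P_\alpha''(q)$ explicitly rather than via H\"older, and then uses that same computation to show $P_\alpha''>0$ strictly on $[-\log p_{\max},-\log p_{\min}]$ under $p_{\min}<p_{\max}$, which yields a unique smooth minimiser $q(\alpha)$. Your route through the variational formula for $g(\alpha)$ is essentially the content of the paper's \emph{next} lemma (\cref{thm:varprinc}) pulled forward, and the envelope argument locating the maximum at $\alpha_1$ via $\sgn g'(\alpha)=\sgn q^*(\alpha)$ is a legitimate alternative to the paper's more direct use of the derivative formulas. One small slip: at the endpoints the constraint set $\{-\sum q_i\log p_i=\alpha\}$ is the face supported on $\cI_{\max}$ (resp.\ $\cI_{\min}$), not a single vertex, and $g$ equals $s_{\max}$ (resp.\ $s_{\min}$) there, not $0$.

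The real gap is concavity. Your formula
\[
  g''(\alpha)=\frac{(q^*)^2\operatorname{Var}_W(a)\operatorname{Var}_W(b)-\bigl(\langle a\rangle_W-q^*\operatorname{Cov}_W(a,b)\bigr)^2}{\langle a\rangle_W^{3}\,\operatorname{Var}_W(b)}
\]
is correct, but the inequality $(q^*)^2V_aV_b\le(A-q^*C)^2$ is not established by what you wrote. Cauchy--Schwarz on the covariance only gives $|q^*C|\le|q^*|\sqrt{V_aV_b}$, which controls the cross term in the wrong direction; and the observation $q^*b_i=\log W_i+g\,a_i$ (your ``Gibbs shape'' remark) is an identity, not an inequality, and the two scalar constraints $\sum W_i=1$, $\langle b\rangle_W=0$ do not by themselves bound the second-order quantity you need, especially since $q^*$ is unbounded near the endpoints of $I$. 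Your alternative reformulation is honest about why this is hard: the bilinear term $q\alpha$ prevents $g$ from being a Legendre transform in $\alpha$, so concavity is not automatic. For comparison, the paper does not actually prove this step either --- it simply \emph{asserts} that $\alpha\mapsto P_\alpha(q(\alpha))$ is strictly concave and records the endpoint limits --- so you are not missing an argument written there; but you have also not supplied one.
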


\begin{proof}
  It is easy to see that the solution $P_\alpha(q)$ is indeed unique since the map
  \[
    x\mapsto\sum_{i=1}^N \lambda_i^{x}(p_ie^\alpha)^q
  \]
  is strictly monotone decreasing, continuous,
  goes to $0$ as $x\to\infty$ and goes to $\infty$ as $x\to-\infty$.

  It is clear from the definition that $q\mapsto P_\alpha(q)$ is continuous, so, we show that the
  map $q\mapsto P_{\alpha}(q)$ is convex (but not necessarily strictly convex) on $\bbR$. Simple
  algebraic calculations show that for every $q\in\bbR$
  \begin{equation}\label{eq:Pder}
    0=\sum_{i=1}^N\lambda_i^{P_\alpha(q)}(p_ie^\alpha)^q\left(\log p_i+\alpha+P_\alpha'(q)\log\lambda_i\right)
  \end{equation}
  and
  $$
  0=\sum_{i=1}^N\lambda_i^{P_\alpha(q)}(p_ie^\alpha)^q\left(\log
  p_i+\alpha+P_\alpha'(q)\log\lambda_i\right)^2+P_\alpha''(q)\sum_{i=1}^N\lambda_i^{P_\alpha(q)}(p_ie^\alpha)^q
  \log\lambda_i,
  $$
  thus,
  \[
    P_\alpha''(q)=\frac{\sum_{i=1}^N\lambda_i^{P_\alpha(q)}(p_ie^\alpha)^q\left(\log p_i+\alpha+P_\alpha'(q)\log\lambda_i\right)^2}{-\sum_{i=1}^N\lambda_i^{P_\alpha(q)}(p_ie^\alpha)^q \log\lambda_i}\geq0,
  \]
  hence, the convexity follows. Clearly, $P_\alpha''(q)=0$ for some $q\in\bbR$ if and only if
  $P_\alpha'(q)=\frac{\log(p_ie^\alpha)}{-\log\lambda_i}$ for every $i$ and every $q\in\bbR$. In
  particular, $P_\alpha''\equiv0$ and
  $\frac{\log(p_ie^\alpha)}{-\log\lambda_i}=\frac{\log(p_je^\alpha)}{-\log\lambda_j}$ for every
  $i\neq j$.

  Now, let us study the continuity of $\alpha\mapsto\inf_{q\in\bbR}P_\alpha(q)$. It is easy to see
  that if $p_{\min}e^\alpha>1$ then
  \[
    P_\alpha'(q)=\frac{\sum_{i=1}^N\lambda_i^{P_\alpha(q)}(p_ie^\alpha)^q\log(p_ie^\alpha)}{-\sum_{i=1}^N\lambda_i^{P_\alpha(q)}(p_ie^\alpha)^q\log\lambda_i}\geq\frac{\log(p_{\min}e^\alpha)}{-\log\min_i\{\lambda_i\}}>0
    \quad\text{ for every }
    q\in\bbR,
  \]
  and so $\inf_{q\in\bbR}P_\alpha(q)=\lim_{q\to-\infty}P_\alpha(q)=-\infty$. Similarly, if
  $p_{\max}e^\alpha<1$ then $P_\alpha'(q)<0$ for every $q\in\bbR$, and hence,
  $\inf_{q\in\bbR}P_\alpha(q)=\lim_{q\to\infty}P_\alpha(q)=-\infty$. Hence,
  $\alpha\mapsto\inf_{q\in\bbR}P_\alpha(q)$ defined only on $[-\log p_{\max},-\log p_{\min}]$.

  Let $\cI_{\max}=\{i:p_i=p_{\max}\}$ and $\cI_{\min}=\{i:p_i=p_{\min}\}$. Since
  $\alpha$ is an element in $[-\log p_{\max},-\log p_{\min}]$, for $q\geq0$ we get
  $$
  1=\sum_{i=1}^N\lambda_i^{P_\alpha(q)}(p_ie^\alpha)^q\geq\sum_{i\in \cI_{\max}}\lambda_i^{P_\alpha(q)}.
  $$
  Similarly, $1\geq\sum_{i\in \cI_{\min}}\lambda_i^{P_\alpha(q)}$ for $q\leq 0$. Thus,
  $P_\alpha(q)\geq0$ for every $q\in\bbR$. Let $s_{\min}$ and $s_{\max}$ be the unique solutions of
  $1=\sum_{i\in \cI_{\min}}\lambda_i^{s_{\min}}$ and $1=\sum_{i\in \cI_{\min}}\lambda_i^{s_{\max}}$,
  respectively.

  Under the assumption that $p_i\neq p_j$ for some $i\neq j$, a simple algebraic manipulation shows
  that if $\lambda_i=\lambda_j$ for every $i\neq j$ then there is no $\alpha$ such that
  $P_\alpha''\equiv0$. Moreover, if  $\lambda_i\neq\lambda_j$ for some $i\neq j$ then the only
  possible value of $\alpha$ for which $P_\alpha''\equiv0$ is
  $$
  \widehat{\alpha}=\frac{\tfrac{\log p_i}{-\log\lambda_i}-\tfrac{\log
  p_j}{-\log\lambda_j}}{\tfrac{1}{-\log\lambda_j}-\tfrac{1}{-\log\lambda_i}}.
  $$
  However, $\widehat{\alpha}\notin [-\log p_{\max},-\log p_{\min}]$. Indeed, if
  $\widehat{\alpha}\in [-\log p_{\max},-\log p_{\min}]$ then $P_{\widehat{\alpha}}'\equiv c$ for
  some $c\in\bbR$. If $P_{\widehat{\alpha}}'\equiv c\neq0$, then $P_{\widehat{\alpha}}(q)=s_0-cq$
  but then $\inf_{q\in\bbR}P_{\widehat{\alpha}}(q)=-\infty$, which contradicts to
  $P_{\widehat{\alpha}}(q)\geq0$ for every $q$, and if $c=0$ then $p_ie^{\widehat{\alpha}}=1$ for
  every $i$ but this contradicts to the assumption $p_i\neq p_j$ for some $i\neq j$. This implies
  that
  $$
  P_\alpha''(q)>0
  $$
  for every $\alpha\in [-\log p_{\max},-\log p_{\min}]$.

  Simple algebraic manipulation shows that
  $$
  \lim_{q\to\infty}P_\alpha'(q)=\frac{\sum_{i\in\cI_{\max}}\lambda_i^{s_{\max}}\log(p_i
  e^{\alpha})}{-\sum_{i\in\cI_{\max}}\lambda_i^{s_{\max}}\log\lambda_i}\geq 0\quad\text{and}\quad
  \lim_{q\to-\infty}P_\alpha'(q)=\frac{\sum_{i\in\cI_{\min}}\lambda_i^{s_{\min}}\log(p_i
  e^{\alpha})}{-\sum_{i\in\cI_{\min}}\lambda_i^{s_{\min}}\log\lambda_i}\leq 0.
  $$
  Hence, if $p_i\neq p_j$ for some $i\neq j$ then for every
  $\alpha\in(-\log p_{\max},-\log p_{\min})$ there exists a continuous function $\alpha\mapsto
  q(\alpha)$ such that $P_\alpha'(q(\alpha))=0$, and
  $\inf_{q\in\bbR}P_\alpha(q)=P_{\alpha}(q(\alpha))$. Moreover, the map $\alpha\mapsto
  P_{\alpha}(q(\alpha))$ is strictly concave,
  $\lim_{\alpha\to-\log p_{\max}}P_{\alpha}(q(\alpha))=s_{\max}$ and
  $\lim_{\alpha\to-\log p_{\min}}P_{\alpha}(q(\alpha))=s_{\min}$, which completes the proof.
\end{proof}

\cref{thm:almostlegendre} shows that $\inf_{q}P_\alpha(q)$ has similar properties to the standard
Legendre transform of convex functions.

\begin{proposition}\label{thm:legendre}
  Let $P_\alpha(q)$ be the unique solution of $\sum_{i=1}^N \lambda_i^{P_{\alpha}(q)}(p_ie^\alpha)^q=1.$ Then
  \begin{equation}\label{eq:regions}
    \inf_{q\in[0,1]} \left\{P_\alpha(q)\right\}=\begin{cases}
      P_{\alpha}(1), & \mbox{if } \alpha\leq-\sum_{i=1}^N\lambda_i^{P_\alpha(1)}p_ie^\alpha \log p_i \\
      \inf_{q\in\bbR} \left\{P_\alpha(q)\right\}, & \mbox{if } - \sum_{i=1}^N
      \lambda_i^{P_{\alpha}(0)}\log p_i>\alpha>-\sum_{i=1}^N\lambda_i^{P_{\alpha}(1)}p_ie^\alpha
      \log p_i \\
      P_{\alpha}(0)=s_0, & \mbox{otherwise}.
    \end{cases}
  \end{equation}
  Moreover, the map $\alpha\mapsto\inf_{q\in[0,1]}P_\alpha(q)$ is also continuous, and strictly
  smaller than $s_0=\dim_H\Lambda$ whenever $- \sum_{i=1}^N \lambda_i^{s_0}\log p_i>\alpha$.
\end{proposition}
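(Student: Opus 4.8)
The plan is to use the convexity of $q\mapsto P_\alpha(q)$ from \cref{thm:almostlegendre}: on the compact interval $[0,1]$ a convex function attains its infimum either at an endpoint or at an interior critical point, so everything is decided by the signs of the two one-sided derivatives $P_\alpha'(0)$ and $P_\alpha'(1)$. First I would record the two boundary identities coming from the defining equation $\sum_i\lambda_i^{P_\alpha(q)}(p_ie^\alpha)^q=1$: at $q=0$ it reduces to $\sum_i\lambda_i^{P_\alpha(0)}=1$, hence $P_\alpha(0)=s_0$, while at $q=1$ it reads $\sum_i\lambda_i^{P_\alpha(1)}p_ie^\alpha=1$. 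Next, from the implicit differentiation \eqref{eq:Pder} carried out in the proof of \cref{thm:almostlegendre},
\[
  P_\alpha'(q)=\frac{\sum_{i=1}^N\lambda_i^{P_\alpha(q)}(p_ie^\alpha)^q(\log p_i+\alpha)}{-\sum_{i=1}^N\lambda_i^{P_\alpha(q)}(p_ie^\alpha)^q\log\lambda_i},
\]
and since every $\log\lambda_i<0$ the denominator is strictly positive; therefore $\sgn P_\alpha'(q)=\sgn\bigl(\sum_i\lambda_i^{P_\alpha(q)}(p_ie^\alpha)^q(\log p_i+\alpha)\bigr)$.

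Substituting the boundary identities, $P_\alpha'(1)\le0$ is equivalent to $\alpha\sum_i\lambda_i^{P_\alpha(1)}p_ie^\alpha\le-\sum_i\lambda_i^{P_\alpha(1)}p_ie^\alpha\log p_i$, that is, to $\alpha\le-\sum_i\lambda_i^{P_\alpha(1)}p_ie^\alpha\log p_i$; similarly $P_\alpha'(0)\ge0$ is equivalent to $\alpha\ge-\sum_i\lambda_i^{s_0}\log p_i=-\sum_i\lambda_i^{P_\alpha(0)}\log p_i$. Since $P_\alpha'$ is nondecreasing by convexity, the case analysis follows at once. If $P_\alpha'(1)\le0$ then $P_\alpha$ is nonincreasing on $[0,1]$, so $\inf_{q\in[0,1]}P_\alpha(q)=P_\alpha(1)$ (first case). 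If $P_\alpha'(0)\ge0$ then $P_\alpha$ is nondecreasing on $[0,1]$, so $\inf_{q\in[0,1]}P_\alpha(q)=P_\alpha(0)=s_0$; a short Boolean check shows this is exactly the ``otherwise'' case, since the complement of the first two displayed conditions is precisely $\{P_\alpha'(1)>0\}\cap\{P_\alpha'(0)\ge0\}$. In the remaining case $P_\alpha'(0)<0<P_\alpha'(1)$, the intermediate value theorem gives $q^*\in(0,1)$ with $P_\alpha'(q^*)=0$, and by convexity such a $q^*$ minimises $P_\alpha$ over all of $\bbR$; hence $\inf_{q\in[0,1]}P_\alpha(q)=P_\alpha(q^*)=\inf_{q\in\bbR}P_\alpha(q)$ (middle case). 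On the small overlap of the first and ``otherwise'' conditions one has $P_\alpha'(0)=P_\alpha'(1)=0$, so $P_\alpha$ is constant on $[0,1]$ and both formulas return $s_0$, consistently with the degenerate $P_\alpha''\equiv0$ situation analysed in \cref{thm:almostlegendre}.

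For the last two assertions: $(\alpha,q)\mapsto P_\alpha(q)$ is jointly continuous (indeed real-analytic, by the implicit function theorem applied to the defining equation, whose left side is strictly monotone in the unknown with nonvanishing derivative), so $\alpha\mapsto\min_{q\in[0,1]}P_\alpha(q)$ is continuous as the minimum of a continuous function over a fixed compact set. And if $\alpha<-\sum_i\lambda_i^{s_0}\log p_i$, then by the equivalence above $P_\alpha'(0)<0$, so $P_\alpha(\varepsilon)<P_\alpha(0)=s_0$ for all small $\varepsilon>0$; choosing such an $\varepsilon\in[0,1]$ gives $\inf_{q\in[0,1]}P_\alpha(q)\le P_\alpha(\varepsilon)<s_0$.

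The only point that needs real care is the bookkeeping of the three regions: checking that the case split is exhaustive, that the strict versus non-strict inequalities at the region boundaries line up, and that the degenerate affine case (no interior critical point, or $P_\alpha\equiv s_0$) is absorbed correctly rather than slipping through. All of this is forced by the two sign computations at $q=0,1$ together with the monotonicity of $P_\alpha'$; the rest is routine.
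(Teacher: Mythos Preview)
Your proof is correct and follows essentially the same approach as the paper: both reduce \eqref{eq:regions} to the signs of $P_\alpha'(0)$ and $P_\alpha'(1)$ via the derivative formula \eqref{eq:Pder} from \cref{thm:almostlegendre}, and then let convexity do the rest. Your treatment is in fact slightly more complete, since you explicitly argue the continuity of $\alpha\mapsto\inf_{q\in[0,1]}P_\alpha(q)$ (via joint continuity and minimisation over a fixed compact) and the strict inequality $\inf<s_0$ when $P_\alpha'(0)<0$, whereas the paper's proof only spells out \eqref{eq:regions} and leaves the ``Moreover'' clause implicit. One minor wording point: your remark about an ``overlap of the first and `otherwise' conditions'' is slightly off, since by definition ``otherwise'' excludes the first case; what you are really checking (correctly) is that on the common boundary $P_\alpha'(0)=P_\alpha'(1)=0$ both formulas return $s_0$.
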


\begin{proof}
  Let us first prove \cref{eq:regions}. Suppose that $P_\alpha''\equiv0$, and so
  there exists $c\in\bbR$ such that $\frac{\log(p_ie^\alpha)}{-\log\lambda_i}=c$ for every $i$. In
  particular, $P_\alpha(q)=cq+s_0$. Then
  $\alpha\leq-\sum_{i=1}^N\lambda_i^{P_\alpha(1)}p_ie^\alpha \log
  p_i=-\sum_{i=1}^N\lambda_i^{s_0}\log p_i$ holds if and only if $c\leq0$ and so
  $\inf_{q\in[0,1]}P_{\alpha}(q)=P_\alpha(1)$, which proves the claim in this case.

  We may suppose now that $P_\alpha''(q)>0$ for all $q\in\bbR$. Hence, there exists at most one
  $q'\in\bbR$ such that $P_{\alpha}'(q')=0$. Thus, if
  $\alpha\leq-\sum_{i=1}^N\lambda_i^{P_\alpha(1)}p_ie^\alpha \log p_i$ then $P_{\alpha}'(1)\leq0$ by
  \cref{eq:Pder}, and by convexity $1\leq q'$, and so, $P_{\alpha}'(q)\leq0$ for every $q\in[0,1]$,
  which clearly implies that $\inf_{q\in[0,1]} \left\{P_\alpha(q)\right\}=P_\alpha(1)$. The case
  $\alpha\geq-\sum_{i=1}^N\lambda_i^{P_\alpha(0)}\log p_i$ is similar, and the remaining case
  follows by the uniqueness of the infimal point of convex functions.
\end{proof}

Note that if $p_i=1/N$ for every $i$, then the middle region in \cref{eq:regions} is empty. Now, we
show the following variational principle result:

\begin{lemma}\label{thm:varprinc}
  For every $\alpha>0$,
  \begin{equation*}
    \sup\left\{\frac{-\sum_{i=1}^Nq_i\log q_i}{-\sum_{i=1}^Nq_i\log\lambda_i} :
    \sum_{i=1}^Nq_i\log p_i+\alpha\geq0\right\}=\inf_{q>0}P_\alpha(q),
  \end{equation*}
  and the supremum is attained for the probability vector
  $q_{i,\alpha}^*=\lambda_i^{P_\alpha(q^*)}(p_ie^{\alpha})^{q^*}$, where $q^*$ is where the
  infimum of $\inf_{q>0}P_\alpha(q)$ is attained.

  Moreover, for every $\alpha>0$,
  \begin{equation}\label{eq:varprinc2}
    \sup\left\{\frac{-\sum_{i=1}^Nq_i\log q_i}{-\sum_{i=1}^Nq_i\log\lambda_i} :
    \sum_{i=1}^Nq_i\log p_i+\alpha\leq0\right\}=\inf_{q<0}P_\alpha(q),
  \end{equation}
  and the supremum is attained for the probability vector
  $q_{i,\alpha}^*=\lambda_i^{P_\alpha(q^*)}(p_ie^{\alpha})^{q^*}$, where $q^*$ is where the
  infimum of $\inf_{q<0}P_\alpha(q)$ is attained.
\end{lemma}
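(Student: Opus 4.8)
The plan is to read both identities as instances of Lagrangian (Legendre) duality, with $q$ playing the role of the multiplier attached to the constraint $\sum_i q_i\log p_i+\alpha\ge 0$ in the first identity and $-q$ the multiplier for the reversed constraint in \cref{eq:varprinc2}. Throughout write $H(\mathbf q)=-\sum_{i=1}^N q_i\log q_i$ and $\Lambda(\mathbf q)=-\sum_{i=1}^N q_i\log\lambda_i$ for a probability vector $\mathbf q=(q_1,\dots,q_N)$; since $\lambda_i\in(0,1)$ we always have $\Lambda(\mathbf q)>0$, so $H(\mathbf q)/\Lambda(\mathbf q)\le s$ is equivalent to $H(\mathbf q)-s\Lambda(\mathbf q)\le 0$. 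Everything rests on one computation: for any probability vector $\mathbf q$ and any $q\in\bbR$, expanding and using $\sum_i\lambda_i^{P_\alpha(q)}(p_ie^\alpha)^q=1$ together with Jensen's inequality for $\log$ with weights $q_i$ yields
\begin{align*}
  H(\mathbf q)-P_\alpha(q)\Lambda(\mathbf q)+q\sum_{i=1}^N q_i(\log p_i+\alpha)
  &=\sum_{i=1}^N q_i\log\frac{\lambda_i^{P_\alpha(q)}(p_ie^\alpha)^q}{q_i}\\
  &\le\log\sum_{i=1}^N\lambda_i^{P_\alpha(q)}(p_ie^\alpha)^q=0,
\end{align*}
with equality exactly when $q_i=\lambda_i^{P_\alpha(q)}(p_ie^\alpha)^q$ for all $i$.

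For the ``$\le$'' direction of the first identity, fix $q>0$ and a probability vector $\mathbf q$ with $\sum_i q_i(\log p_i+\alpha)\ge0$; the display forces $H(\mathbf q)\le P_\alpha(q)\Lambda(\mathbf q)$, that is, $H(\mathbf q)/\Lambda(\mathbf q)\le P_\alpha(q)$. Taking the supremum over such $\mathbf q$ and then the infimum over $q>0$ shows that the left side of the first identity is at most $\inf_{q>0}P_\alpha(q)$. For \cref{eq:varprinc2} the bound is the same, now with $q<0$ and $\sum_i q_i(\log p_i+\alpha)\le0$, which again makes the last term of the display nonpositive.

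For the reverse inequality and the attainment statement I would produce the optimiser explicitly. Let $q^*$ be a point where $\inf_{q>0}P_\alpha(q)$ is attained; its existence in $[0,\infty)$, and the trichotomy interior minimiser / boundary $q^*=0$ / infimum $=-\infty$, follow from the convexity of $P_\alpha$ and the behaviour of $P_\alpha'$ at $0$ and at $+\infty$ recorded in \cref{thm:almostlegendre}. Put $q^*_{i,\alpha}=\lambda_i^{P_\alpha(q^*)}(p_ie^\alpha)^{q^*}$, a probability vector by definition of $P_\alpha(q^*)$. If $q^*>0$ is an interior minimiser then $P_\alpha'(q^*)=0$, and differentiating the defining relation as in \cref{eq:Pder} gives $\sum_i q^*_{i,\alpha}(\log p_i+\alpha)=-P_\alpha'(q^*)\sum_i q^*_{i,\alpha}\log\lambda_i=0$, so $\mathbf q^*$ is feasible; by the equality case of the display, $H(\mathbf q^*)-P_\alpha(q^*)\Lambda(\mathbf q^*)=-q^*\sum_i q^*_{i,\alpha}(\log p_i+\alpha)=0$, whence $H(\mathbf q^*)/\Lambda(\mathbf q^*)=P_\alpha(q^*)=\inf_{q>0}P_\alpha(q)$, completing the first identity. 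The boundary case $q^*=0$ occurs exactly when $P_\alpha'(0)\ge0$, equivalently when the dimension-maximising vector $(\lambda_i^{s_0})_i$ is feasible, and then $\mathbf q^*=(\lambda_i^{s_0})_i$ has ratio $s_0=P_\alpha(0)$; the case $\inf_{q>0}P_\alpha(q)=-\infty$ forces $\alpha<-\log p_{\max}$ by \cref{thm:almostlegendre}, whereupon $\sum_i q_i\log p_i\le\log p_{\max}<-\alpha$ for every probability vector, the feasible set is empty, and both sides equal $-\infty$ (with the convention $\sup\varnothing=-\infty$). Formula \cref{eq:varprinc2} is then obtained symmetrically, using the minimiser of $\inf_{q<0}P_\alpha(q)$, the statement that $P_\alpha'(0)\le0$ iff $(\lambda_i^{s_0})_i$ satisfies the reversed constraint, and the fact that $\inf_{q<0}P_\alpha(q)=-\infty$ iff $\alpha>-\log p_{\min}$ iff the reversed feasible set is empty.

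The main obstacle is not the duality estimate --- that is the single Jensen line above, and it is precisely what makes the ``one pressure function'' description of the problem possible --- but the bookkeeping of boundary regimes: one must check that the candidate vector $q^*_{i,\alpha}$ is well defined and feasible exactly in the range of $\alpha$ where the infimum on the right-hand side is finite, and pair the complementary range with an empty feasible set. Here the monotonicity and derivative facts for $P_\alpha$ from \cref{thm:almostlegendre} do all the work.
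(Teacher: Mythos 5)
Your proposal is correct and follows essentially the same route as the paper: the weak-duality direction is exactly the paper's Jensen/Gibbs inequality $\sum_i q_i\log\bigl(\lambda_i^{P_\alpha(q)}(p_ie^\alpha)^q/q_i\bigr)\le 0$, and the reverse inequality comes from the same explicit optimiser $q^*_{i,\alpha}=\lambda_i^{P_\alpha(q^*)}(p_ie^\alpha)^{q^*}$, whose feasibility at the critical point follows from \cref{eq:Pder}. Your bookkeeping of the boundary and degenerate regimes (empty feasible set paired with $\inf=-\infty$) is in fact slightly more explicit than the paper's, which dismisses that case as trivial.
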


\begin{proof}
  First, let us observe that by the concavity of the logarithm for every $q>0$ and for every
  probability vector $\{q_i\}_{i=1}^N$ such that $\sum_{i=1}^Nq_i\log p_i+\alpha\geq0$ we get
  \begin{align}
    0&=\log\sum_{i=1}^N\lambda_i^{P_\alpha(q)}(p_ie^{\alpha})^q
    \geq\sum_{i=1}^Nq_i\left(\log\left((p_ie^\alpha)^q\lambda_i^{P_\alpha(q)}\right)-\log
    q_i\right)\nonumber\\
     &=q\left(\sum_{i=1}^Nq_i\log p_i+\alpha\right)
     +P_{\alpha}(q)\sum_{i=1}^Nq_i\log\lambda_i-\sum_{i=1}^Nq_i\log q_i\nonumber\\
     &\geq P_{\alpha}(q)\sum_{i=1}^Nq_i\log\lambda_i-\sum_{i=1}^Nq_i\log q_i,\nonumber
  \end{align}
  and so,
  \begin{equation}\label{eq:corolary}
    P_\alpha(q)\geq\frac{-\sum_{i=1}^Nq_i\log q_i}{-\sum_{i=1}^Nq_i\log\lambda_i}.
  \end{equation}

  Now, suppose that $P_{\alpha}''(q)=0$ for every $q\in\bbR$, and in particular,
  $P_\alpha'(q)=\frac{\log p_i+\alpha}{-\log\lambda_i}=c$ for every $i$ and $q$. Then $c\geq0$ if
  and only if $\alpha\geq-\sum_i\lambda_i^{s_0}\log p_i$ and in this case
  $$
  \inf_{q>0}P_{\alpha}(q)
  =P_{\alpha}(0)=s_0=\sup\frac{-\sum_{i=1}^Nq_i\log q_i}{-\sum_{i=1}^Nq_i\log\lambda_i}.
  $$
  If $c<0$ then $\inf_{q>0}P_\alpha(q)=-\infty$, and $\sum_{i=1}^Nq_i\log
  p_i+\alpha=-c\sum_{i=1}^Nq_i\log \lambda_i<0$, thus, the claim of the lemma is trivial.

  If $P_\alpha''(q)>0$ for every $q\in\bbR$ then by \cref{thm:legendre}, there exists a unique
  $q^*\in\bbR$ such that $\inf_{q\in\bbR}P_\alpha(q)=P_\alpha(q^*)$. If $q^*\leq0$ then
  $\inf_{q>0}P_\alpha(q)=P_\alpha(0)$, and $\sum_{i=1}^N\lambda_i^{P_\alpha(0)}\log p_i+\alpha\geq0$
  by \cref{thm:legendre} and $\frac{-\sum_{i=1}^N\lambda_i^{P_\alpha(0)}\log
  \lambda_i^{P_\alpha(0)}}{-\sum_{i=1}^N\lambda_i^{P_\alpha(0)}\log\lambda_i}=P_\alpha(0)=s_0$. 
  If $q^*>0$ then $P_\alpha'(q^*)=0$ and by \cref{eq:Pder}
  \[
    0=\sum_{i=1}^N\lambda_i^{P_\alpha(q^*)}(p_ie^{\alpha})^{q^*}\log p_i+\alpha,
  \]
  and then by \cref{eq:corolary}
  \[
    P_\alpha(q^*)\geq\frac{-\sum_{i=1}^N\lambda_i^{P_\alpha(q^*)}(p_ie^{\alpha})^{q^*}\log
      \lambda_i^{P_\alpha(q^*)}(p_ie^{\alpha})^{q^*}}{-\sum_{i=1}^N\lambda_i^{P_\alpha(q^*)}
    (p_ie^{\alpha})^{q^*}\log\lambda_i}=P_{\alpha}(q^*)-q^*P_{\alpha}'(q^*)=P_{\alpha}(q^*).
  \]
  The proof of the second claim is similar and left for the reader.
\end{proof}

The three cases in \cref{thm:legendre} naturally define measures on $\Sigma$. Namely, in the first
case let us denote by $\bbQ_1$ the Bernoulli measure on $\Sigma$ associated to the probability vector
$\{\lambda_i^{P_\alpha(1)}p_ie^{\alpha}\}_{i=1}^N$, let us denote by $\bbQ_0$ the Bernoulli measure
on $\Sigma$ associated to the probability vector $\{\lambda_i^{s_0}\}_{i=1}^N$, and let $\bbQ_{q^*}$
be associated to
$\{q_{i,\alpha}^*=\lambda_i^{P_\alpha(q^*(\alpha))}(p_ie^{\alpha})^{q^*(\alpha)}\}_{i=1}^N$, where
$q^*(\alpha)$ is the
unique solution for $P_\alpha(q^*(\alpha))=\inf_qP_\alpha(q)$.

We will turn now to study the relation of \cref{eq:probpres} and $P_\alpha(q)$. Let
$\ell\colon\bbN\to\bbN$ be a monotone increasing function as in \cref{thm:main}. Let us write
\begin{equation}\label{eq:famousmn}
  m(n)=\left\lfloor \frac{\# \ell^{-1}(n)}{n} \right\rfloor.
\end{equation}
Note that $\limsup_{n\to\infty}\frac{\log m(n)}{n}=\alpha$.

For any $s\geq0$, we define the {\bf probabilistic pressure} as
\begin{equation}\label{eq:probpressure}
  L\colon s\mapsto\limsup_{n\to \infty} \tfrac 1n \log
  \sum_{|\bj|=n}\lambda_{\bj}^{s}\left(1-(1-p_{\bj})^{m(n)}\right).
\end{equation}
It is clear that the map above is continuous and strictly monotone decreasing. Moreover, it tends
to $-\infty$ as $s\to\infty$ and at $s=0$ it is non-negative. Indeed,
\[
  \begin{split}
    \sum_{|\bj|=n}\left(1-(1-p_{\bj})^{m(n)}\right)&\geq \sum_{|\bj|=n}p_{\bj}=1.
  \end{split}
\]
Hence, there is a unique solution of $L(s)=0$, which we denote by $s(\alpha)$ with a slight abuse of notation.

Next, we show that the two formulas of the dimension of the random covering set are
equivalent. Before doing so, we define two sets, which play an important role in this argument and further
in the paper. Let $\epsilon>0$ and
\begin{equation}\label{eq:GV}
  \cG_n^\epsilon=\{\bj\in \Sigma_n\mid 1-(1-p_\bj)^{m(n)}\le (1-\epsilon)^n \} \text{ and
  }\cV_n^\epsilon=\{\bj\in \Sigma_n \mid 1-(1-p_\bj)^{m(n)}>(1-\epsilon)^n \}.
\end{equation}
By the approximations \cref{eq:repeat} we immediately obtain
\begin{equation}\label{eq:setest}
  \{\bj\in \Sigma_n\mid p_{\bj}m(n)\le (1-\epsilon)^n\}\subset \cG_n^\epsilon \subset \{ \bj\in
  \Sigma_n\mid p_\bj m(n)\le \frac1\epsilon(1-\epsilon)^n \},
\end{equation}
and so,
\begin{equation}\label{eq:setest2}
  \{ \bj\in \Sigma_n\mid p_\bj m(n)> \frac1\epsilon(1-\epsilon)^n \}\subset \cV_n^\epsilon \subset
  \{\bj\in \Sigma_n\mid p_{\bj}m(n)> (1-\epsilon)^n\}.
\end{equation}

\begin{lemma}\label{thm:twospec}
  Let $s(\alpha)$ be the unique solution of $L(s(\alpha))=0$. Then $\inf_{q\in[0,1]}P_\alpha(q)=s(\alpha)$.
\end{lemma}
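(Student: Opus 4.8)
The plan is to prove that the probabilistic pressure $L$ of \cref{eq:probpressure} vanishes at $s^{\star}:=\inf_{q\in[0,1]}P_\alpha(q)$. Since $L$ is continuous, strictly decreasing, and has $s(\alpha)$ as its unique zero, this forces $s^{\star}=s(\alpha)$. Fix a point $q_0\in[0,1]$ at which the continuous map $q\mapsto P_\alpha(q)$ attains its minimum over the compact interval $[0,1]$, and set $r_i:=\lambda_i^{s^{\star}}(p_ie^{\alpha})^{q_0}$. Because $s^{\star}=P_\alpha(q_0)$, the vector $(r_1,\dots,r_N)$ is a probability vector, and all $r_i>0$. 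Write $t_r:=-\sum_i r_i\log\lambda_i>0$, $c_r:=-\sum_i r_i\log p_i$ and $H(r):=-\sum_i r_i\log r_i$; from $\log r_i=s^{\star}\log\lambda_i+q_0\log p_i+q_0\alpha$ we record the identity $H(r)=s^{\star}t_r+q_0(c_r-\alpha)$.

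\textbf{Upper bound $L(s^{\star})\le 0$.} By \cref{eq:repeat} we have $1-(1-p_\bj)^{m(n)}\le\min\{1,p_\bj m(n)\}\le (p_\bj m(n))^{q_0}$, the last inequality because $\min\{1,y\}\le y^{q_0}$ for every $y\ge 0$ and $q_0\in[0,1]$. Using multiplicativity and the defining relation of $P_\alpha$,
\[
  \sum_{|\bj|=n}\lambda_\bj^{s^{\star}}\bigl(1-(1-p_\bj)^{m(n)}\bigr)\le m(n)^{q_0}\sum_{|\bj|=n}\lambda_\bj^{s^{\star}}p_\bj^{q_0}=m(n)^{q_0}\Bigl(\sum_i\lambda_i^{P_\alpha(q_0)}p_i^{q_0}\Bigr)^{n}=m(n)^{q_0}e^{-\alpha q_0 n}.
\]
Taking $\tfrac1n\log$ and $\limsup$, and using $\limsup_n\tfrac1n\log m(n)=\alpha$, gives $L(s^{\star})\le q_0\alpha-\alpha q_0=0$; hence $s^{\star}\ge s(\alpha)$.

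\textbf{Lower bound $L(s^{\star})\ge 0$.} For all large $n$ pick non-negative integers $(n_1,\dots,n_N)$ with $\sum_i n_i=n$ and $n_i/n\to r_i$, let $T_n\subseteq\Sigma_n$ be the set of words with exactly $n_i$ occurrences of the symbol $i$, and fix $\bj_0\in T_n$. All words of $T_n$ share the values $\lambda_{\bj_0}=e^{-(t_r+o(1))n}$ and $p_{\bj_0}=e^{-(c_r+o(1))n}$, and, by Stirling, $\#T_n=e^{(H(r)+o(1))n}$. Restricting the sum defining $L(s^\star)$ to $T_n$,
\[
  \sum_{|\bj|=n}\lambda_\bj^{s^{\star}}\bigl(1-(1-p_\bj)^{m(n)}\bigr)\ \ge\ \#T_n\cdot\lambda_{\bj_0}^{s^{\star}}\cdot\bigl(1-(1-p_{\bj_0})^{m(n)}\bigr).
\]
Choose a subsequence $n_k$ along which $\tfrac1{n_k}\log m(n_k)\to\alpha$. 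Combining $1-(1-x)^{m}\le\min\{1,xm\}$ with $1-(1-x)^{m}\ge 1-e^{-xm}\ge\tfrac12\min\{1,xm\}$ (both consequences of \cref{eq:repeat}) and comparing $p_{\bj_0}m(n_k)=e^{(\alpha-c_r+o(1))n_k}$ with $1$, one checks — treating the cases $c_r<\alpha$, $c_r>\alpha$ and $c_r=\alpha$ separately — that $\tfrac1{n_k}\log\bigl(1-(1-p_{\bj_0})^{m(n_k)}\bigr)\to -(c_r-\alpha)^{+}$. Combining with the two preceding displays and the identity for $H(r)$ gives $L(s^{\star})\ge H(r)-s^{\star}t_r-(c_r-\alpha)^{+}=q_0(c_r-\alpha)-(c_r-\alpha)^{+}$. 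To see that this vanishes, differentiate the defining relation of $P_\alpha$: \cref{eq:Pder} evaluated at $q_0$ yields $\alpha-c_r=P_\alpha'(q_0)\,t_r$, so $\mathrm{sign}(\alpha-c_r)=\mathrm{sign}(P_\alpha'(q_0))$. First-order optimality of $q_0$ on $[0,1]$ now gives $P_\alpha'(q_0)=0$ (hence $c_r=\alpha$) when $q_0\in(0,1)$; $P_\alpha'(0)\ge 0$ (hence $c_r\le\alpha$) when $q_0=0$; and $P_\alpha'(1)\le 0$ (hence $c_r\ge\alpha$) when $q_0=1$. In all three cases $q_0(c_r-\alpha)=(c_r-\alpha)^{+}$, so $L(s^{\star})\ge 0$, hence $s^{\star}\le s(\alpha)$.

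Combining the two bounds, $L(s^{\star})=0=L(s(\alpha))$, so $s^{\star}=s(\alpha)$, which is the assertion. The only genuinely delicate step is the lower bound: one must exhibit an exponentially large family of level-$n$ cylinders whose radii and masses are tuned so that the visit factor $1-(1-p_\bj)^{m(n)}$ does not decay at an exponential rate faster than $(c_r-\alpha)^{+}$, and the type class attached to the extremal vector $r$ — which is precisely the maximiser appearing in \cref{thm:varprinc} — is the right object. The remaining work (the bookkeeping around the borderline regime $c_r=\alpha$, and around the fact that $m(n)$ only matches $e^{\alpha n}$ along a subsequence) is routine but must be carried out with some care, and the case distinction for $q_0\in\{0\}$, $q_0\in(0,1)$, $q_0=1$ mirrors the three regions of \cref{thm:legendre}.
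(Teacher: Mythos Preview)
Your proof is correct and takes a genuinely different, more unified route than the paper.

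The paper proceeds by the trichotomy of \cref{thm:legendre}: it shows separately that $L(P_\alpha(1))=0$ in the region $\alpha\le-\sum_i\lambda_i^{P_\alpha(1)}p_ie^\alpha\log p_i$, that $L(\inf_q P_\alpha(q))=0$ in the middle region, and that $L(s_0)=0$ when $\alpha\ge-\sum_i\lambda_i^{s_0}\log p_i$. In each case the upper bound splits the level-$n$ sum over the auxiliary sets $\cG_n^\epsilon$ and $\cV_n^\epsilon$ of \eqref{eq:GV}, and the lower bounds go through evaluations of the Bernoulli measure $\bbQ_1$ (or $\bbQ_{q^*}$) together with ad hoc multinomial counts in a narrow frequency window.

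Your argument replaces the case split by a single interpolation: the inequality $\min\{1,y\}\le y^{q_0}$ for $q_0\in[0,1]$ gives the upper bound in one line for the optimising $q_0$, and the method-of-types lower bound via the type class attached to the extremal probability vector $r=(\lambda_i^{s^\star}(p_ie^\alpha)^{q_0})_i$ is its natural large-deviations dual. The three regions of \cref{thm:legendre} resurface only at the end, as the KKT conditions for the minimiser $q_0$ on $[0,1]$, which is exactly where the distinction belongs. The gain is brevity and conceptual clarity; on the other hand, the paper's $\cG_n^\epsilon$/$\cV_n^\epsilon$ decomposition is not wasted effort, since it is reused verbatim in the covering arguments of \cref{sec:upper} and \cref{sec:lowerlarge}.
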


\begin{proof}
  Let us first suppose that $0<\alpha<-\sum_{i=1}^N\lambda_i^{P_\alpha(1)}p_ie^\alpha \log p_i$. By
  \cref{eq:repeat}, for every $s\geq0$
  \[\begin{split}
    L(s)=\limsup_{n\to \infty} \tfrac 1n \log
    \sum_{|\bj|=n}\lambda_{\bj}^{s}\left(1-(1-p_{\bj})^{m(n)}\right)
      &\leq\limsup_{n\to \infty}
      \tfrac 1n \log \sum_{|\bj|=n}\lambda_{\bj}^{s}m(n)p_{\bj}\\
      &=\log\sum_{i=1}^N\lambda_i^sp_i+\limsup_{n\to\infty}\tfrac1n\log
      m(n)\\
      &=\log\sum_{i=1}^N\lambda_i^sp_i+\alpha,
  \end{split}\]
  where in the last equality we have used the monotonicity of $\ell$ and \cref{eq:condonl}. Thus, in
  particular, $L(P_\alpha(1))\leq0$.

  Choose $\epsilon>0$ such that $\alpha+\sum_{i=1}^N\lambda_i^{P_\alpha(1)}p_ie^\alpha \log
  p_i<2\log(1-\epsilon)\leq0$. For sufficiently large $n\geq1$, $m(n)\leq e^{\alpha
  n}(1-\epsilon)^{-n}$. Then by \cref{eq:taylor} and the definition of $\cG_n^\epsilon$.
  \begin{align*}
    L(P_\alpha(1))
	&\geq\limsup_{n\to \infty} \tfrac 1n \log \sum_{\bj\in
	\cG_n^\epsilon}\lambda_{\bj}^{P_\alpha(1)}\left(1-(1-p_{\bj})^{m(n)}\right)\\
	&\geq\limsup_{n\to \infty} \tfrac 1n \log \sum_{\bj\in \cG_n^\epsilon}\lambda_{\bj}^{P_\alpha(1)}m(n)p_{\bj}\left(1-\frac{m(n)p_{\bj}}{2}\right)\\
	&\geq\limsup_{n\to \infty} \tfrac 1n \log \sum_{\bj\in \cG_n^\epsilon}\lambda_{\bj}^{P_\alpha(1)}m(n)p_{\bj}\left(1-\frac{(1-\epsilon)^n}{2\epsilon}\right)\\
	&\geq\limsup_{n\to \infty} \left(\tfrac 1n \log\left( \sum_{\bj\in \cG_n^\epsilon}\lambda_{\bj}^{P_\alpha(1)}e^{\alpha n}p_{\bj}\right)+\tfrac 1n \log (m(n)/e^{\alpha n})\right)\\
	&\geq\limsup_{n\to \infty} \Bigg(\tfrac 1n \log \bbQ_1\left(\left\{\bi\in\Sigma:\tfrac1n\log
	  p_{\bi|_n}+\frac{\log m(n)}{n}\leq\log(1-\epsilon)\right\}\right)\\
	&\hspace{10cm}+\tfrac 1n \log (m(n)/e^{\alpha n})\Bigg)\\
	&\geq\limsup_{n\to \infty} \left(\tfrac 1n \log \bbQ_1\left(\left\{\bi\in\Sigma:\tfrac1n\log p_{\bi|_n}+\alpha\leq2\log(1-\epsilon)\right\}\right)+\tfrac 1n \log (m(n)/e^{\alpha n})\right)
	\\
	&=0,
  \end{align*}
  where the last equality follows by the fact that the expected value of $\log p_{i_1}$
  w.r.t.~$\bbQ_1$ is $\sum_{i=1}^N\lambda_i^{P_\alpha(1)}p_ie^\alpha \log p_i$ and the choice of
  $\epsilon$.

  Now suppose that $-\sum_{i=1}^N\lambda_i^{P_\alpha(1)}p_ie^\alpha \log
  p_i\leq\alpha\leq-\sum_{i=1}^N\lambda_i^{P_\alpha(0)}\log p_i$. Let $\varepsilon>0$ be arbitrary.
  Then $m(n)\leq e^{\alpha n}(1-\epsilon)^{-n}$ for every sufficiently large $n\geq1$ and by
  \cref{thm:varprinc} and \cref{eq:setest2}
  \begin{align*}
    \mbox{}&L(P_\alpha(q^*(\alpha)))
    \\
	   &\leq\limsup_{n\to \infty} \tfrac 1n \log\left( 
	     \sum_{\bj\in \cG_n^\epsilon}\lambda_{\bj}^{P_\alpha(q^*(\alpha))}m(n)p_{\bj}+\sum_{\bj\in
	   \cV_n^\epsilon}\lambda_{\bj}^{P_\alpha(q^*(\alpha))}\right)\\
	   &\leq\limsup_{n\to \infty} \tfrac 1n \log\left( \sum_{\bj\in
	     \cG_n^\epsilon}\lambda_{\bj}^{P_\alpha(q^*(\alpha))}(m(n)p_{\bj})^{q^*(\alpha)}(1-\epsilon)^{n(1-q^*(\alpha))}+\sum_{\bj\in
	   \cV_n^\epsilon}\lambda_{\bj}^{P_\alpha(q^*(\alpha))}\right)\\
	   &\leq \limsup_{n\to \infty} \tfrac 1n
	   \log\left((1-\epsilon)^{n(1-2q^*(\alpha))}\bbQ_{q^*}(\{\bi:\bi|_n\in\cG_n^\epsilon\})\right.\\
	   &\left.\hspace{4cm}+ \sum_{\substack{k_1,\ldots,k_N\geq0\\k_1+\cdots+k_N=n\\n\alpha+\sum_{i}k_i\log
	   p_i>n2\log(1-\epsilon)}}\binom{n}{k_1,\ldots,k_N}\left(\lambda_1^{k_1}\cdots\lambda_N^{k_N}\right)^{P_{\alpha}(q^*(\alpha))}\right)\\
	   &=\limsup_{n\to \infty} \tfrac 1n
	   \log\left(e^{o(n)}(1-\epsilon)^{nq^*}\bbQ_{q^*}(\{\bi:\bi|_n\in\cG_n^\epsilon\})\right.\\
	   &\left.\hspace{4cm}+
	   \binom{n+N-1}{N}e^{n\left(h_{\bbQ_{q^*(\alpha-2\log(1-\epsilon))}}+P_{\alpha}(q^*(\alpha))\sum_{i}q_{i,\alpha-2\log(1-\epsilon)}^*\log\lambda_i\right)}\right)\\
	   &\leq\max\left\{ (1-q^*(\alpha))\log(1-\epsilon),\left(P_\alpha(q^*(\alpha))-P_{\alpha-2\log(1-\epsilon)}(q^*(\alpha-2\log(1-\epsilon)))\right)\log\lambda_{\min}\right\}.
  \end{align*}
  and since $\epsilon>0$ was arbitrary, we get that $L(P_\alpha(q^*(\alpha)))\leq0$.

  For the other inequality, observe that for every $\epsilon>0$ there exists a subsequence $n_k$ such that
  \begin{equation}\label{eq:this}
    \frac{(1-\epsilon)^{n_k}}{\epsilon}<p_{\min}e^{-\alpha
    n_k}m(n_k)<p_1^{q_{1,\alpha}^*n_k}\cdots p_N^{q_{N,\alpha}^*n_k}m(n_k),
  \end{equation}
  and so, every finite word $\bi$ with $|\bi|=n_k$ and $\#_i\bi=q_{i,\alpha}^*n_k$ belongs to
  $\cV_{n_k}^\epsilon$. Hence,
  \[
    \begin{split}
      L(P_\alpha(q^*(\alpha)))
	&\geq\limsup_{n\to \infty} \tfrac 1n \log\left( \sum_{\bj\in
	\cV_n^\epsilon}\lambda_{\bj}^{P_\alpha(q^*(\alpha))}(1-\epsilon)^n\right)\\
	&\geq\limsup_{k\to \infty} \tfrac{1}{n_k} \log\left(
	\binom{n_k}{q_{1,\alpha}^*n_k,\ldots,q_{N,\alpha}^*n_k}\left(\lambda_{1}^{q_{1,\alpha}^*n_k}\cdots\lambda_{N}^{q_{N,\alpha}^*n_k}\right)^{P_\alpha(q^*(\alpha))}(1-\epsilon)^{n_k}\right)\\
	&\geq\log(1-\epsilon),
    \end{split}
  \]
  and since $\epsilon>0$ was arbitrary $L(P_\alpha(q^*(\alpha)))\geq0$.

  Finally, suppose that $\alpha>-\sum_{i=1}^N\lambda_i^{P_\alpha(0)}\log p_i$. Since
  $L(s)\leq\log\sum_{i=1}^N\lambda_i^s$, we can further conclude that
  $L(P_\alpha(0))\leq0$. On the other hand, similarly to \cref{eq:this} one can show that
  $(1-\epsilon)^{n_k}/\epsilon<p_1^{\lambda_1^{s_0}n_k}\cdots p_N^{\lambda_1^{s_0}n_k}m(n_k)$ over a
  subsequence $n_k$, and so, similar argument leads us to the inequality $L(P_\alpha(0))\geq0$.
\end{proof}

Finally, let us note that a simple corollary of \cref{thm:twospec} is that the probabilistic
pressures defined in \cref{eq:probpres} and \cref{eq:probpressure} have the same root.

\section{Upper bounds}\label{sec:upper}

This section is devoted to proving the almost sure upper bounds in the main theorems.

\subsection{Upper bound for the dynamical covering set}

First, we will prove the upper bound to the Hausdorff dimension in \cref{thm:main}. Notice that
the value $s(\alpha)$ given as the root of the probabilistic pressure from
\cref{eq:probpressure}, by \cref{thm:twospec} is equal to the dimension value $\inf_{q\in
[0,1]}P_{\alpha}(q)$ from Theorem \cref{thm:main}. 

\begin{proposition}\label{thm:ubmain1}
  Let $\{f_i(x)=\lambda_iO_i+t_i\}_{i=1}^N$ be an IFS of similarities. Let $\alpha>0$ and assume that $\ell: \mathbb
  N\to \mathbb N$ is monotone increasing and it satisfies
  \begin{equation*}
    \liminf_{n\to \infty} \frac{\ell(n)}{\log n}=\tfrac 1\alpha.
  \end{equation*}
  Let $\bbP_p$ be a Bernoulli measure corresponding to a probability vector $p=(p_1, \dots, p_N)$.
  Then, for $\bbP_{p}$-almost every $\bo \in \Sigma$
  $$
  \dim_HR(\bo,\ell)\leq s(\alpha),
  $$
  where $s(\alpha)$ is the unique root of the probabilistic pressure defined in \cref{eq:probpressure}.
\end{proposition}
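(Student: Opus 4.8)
The plan is a first-moment covering argument built on the natural cover of $R(\bo,\ell)$ by the cylinder images $f_{(\sigma^n\bo)|_{\ell(n)}}(\Lambda)$, $n\ge k$, but organised so that each generation-$\ell$ cylinder is counted only \emph{once}; this single bookkeeping choice is what makes the bound sharp (a crude union bound would yield only the non-optimal exponent $P_\alpha(1)$), and is precisely why the probabilistic pressure $L$ from \cref{eq:probpressure} appears. Concretely, I would fix a rational $s>s(\alpha)$ — so that $L(s)=:-2\delta<0$, since $L$ is continuous and strictly decreasing with unique root $s(\alpha)$ — and write, for each $k$,
\[
  R(\bo,\ell)\subseteq\bigcup_{n\ge k}f_{(\sigma^n\bo)|_{\ell(n)}}(\Lambda)=\bigcup_{\ell\ge\ell(k)}\ \bigcup_{\bj\in\cW_\ell^{(k)}(\bo)}f_\bj(\Lambda),\qquad \cW_\ell^{(k)}(\bo):=\{(\sigma^n\bo)|_\ell:n\ge k,\ \ell(n)=\ell\}\subseteq\Sigma_\ell.
\]
Since $\diam f_\bj(\Lambda)=\lambda_\bj\diam\Lambda$ and each cylinder occurring here has length $\ge\ell(k)$, hence diameter $\le\lambda_{\max}^{\ell(k)}\diam\Lambda\to0$, it would suffice to prove $S_k(\bo):=\sum_{\ell\ge\ell(k)}\sum_{\bj\in\cW_\ell^{(k)}(\bo)}\lambda_\bj^{\,s}\to0$ for $\bbP_p$-a.e.\ $\bo$; this forces $\cH^s(R(\bo,\ell))=0$, hence $\dim_HR(\bo,\ell)\le s$ a.s., and intersecting over a sequence $s\downarrow s(\alpha)$ concludes.

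Next I would bound $\bbE_{\bbP_p}[S_1]$. For fixed $\ell$ and $\bj\in\Sigma_\ell$, $\{\bj\in\cW_\ell^{(1)}(\bo)\}=\bigcup_{n\in I}\{o_{n+1}\cdots o_{n+\ell}=\bj\}$ with $I=\ell^{-1}(\ell)$ an interval of at most $\#\ell^{-1}(\ell)$ integers (by monotonicity of $\ell$). The key step is to partition $I$ into residue classes modulo $\ell$: along one class the relevant coordinate blocks are pairwise disjoint, so those events are independent and their union has probability $1-(1-p_\bj)^{\#(\mathrm{class})}$ with $\#(\mathrm{class})\le\lceil\#\ell^{-1}(\ell)/\ell\rceil$; a union bound over the at most $\ell$ classes gives
\[
  \bbP_p\big[\bj\in\cW_\ell^{(1)}(\bo)\big]\le\ell\Big(1-(1-p_\bj)^{\lceil\#\ell^{-1}(\ell)/\ell\rceil}\Big),
\]
which, via $\lceil\#\ell^{-1}(\ell)/\ell\rceil\le2m(\ell)$ when $\#\ell^{-1}(\ell)\ge\ell$ together with the elementary inequality $1-(1-x)^{Cm}\le C(1-(1-x)^m)$, I would reduce to $\le2\ell(1-(1-p_\bj)^{m(\ell)})$, using the cruder $\le\ell p_\bj$ on the remaining levels with $\#\ell^{-1}(\ell)<\ell$. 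Summing over $\bj$,
\[
  \bbE_{\bbP_p}\Big[\sum_{\bj\in\cW_\ell^{(1)}}\lambda_\bj^{\,s}\Big]\le2\ell\sum_{|\bj|=\ell}\lambda_\bj^{\,s}\big(1-(1-p_\bj)^{m(\ell)}\big)+\ell\Big(\sum_{i=1}^N\lambda_i^{\,s}p_i\Big)^{\ell},
\]
and since $\sum_i\lambda_i^sp_i<1$ (as $s>0$) and, by the definition of $L$ and the choice of $s$, $\sum_{|\bj|=\ell}\lambda_\bj^s(1-(1-p_\bj)^{m(\ell)})\le e^{(L(s)+\delta)\ell}=e^{-\delta\ell}$ for all large $\ell$, the level-$\ell$ terms are summable; thus $\bbE_{\bbP_p}[S_1]<\infty$. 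Finally, $S_1<\infty$ a.s., $(S_k)_k$ is non-increasing, and for fixed $\ell,\bj$ the indicator $\bone[\bj\in\cW_\ell^{(k)}]$ vanishes once $\ell(k)>\ell$; so dominated convergence (dominator $S_1$) gives $S_k\to0$ a.s.

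The one genuinely delicate point is the probability estimate for $\{\bj\in\cW_\ell^{(k)}(\bo)\}$: the windows $o_{n+1}\cdots o_{n+\ell}$ over nearby times overlap heavily, so the lossy union bound $\#\ell^{-1}(\ell)\cdot p_\bj$ — which only gives the non-sharp exponent $P_\alpha(1)$ and hence fails beyond the first phase transition — must be replaced by something reflecting the true number $\asymp m(\ell)$ of essentially independent visits. The residue-class decimation does exactly this, recovering the factor $1-(1-p_\bj)^{m(\ell)}$ that defines $L$; the polynomial-in-$\ell$ overhead it introduces is harmless because everything enters through $\tfrac1\ell\log(\cdot)$. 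Everything else — the reduction to $\bbE[S_1]$, the monotone/dominated-convergence passage to $S_k\to0$, and the limit $s\downarrow s(\alpha)$ — I expect to be routine.
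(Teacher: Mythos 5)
Your proposal is correct and follows essentially the same route as the paper: the first-moment bound on the sum $\sum_n\sum_{|\bj|=n}\lambda_\bj^s\,\bone\{\bj\text{ is visited}\}$, with the key probability estimate obtained by splitting $\ell^{-1}(n)$ into residue classes modulo $n$ so that each class consists of disjoint (hence independent) windows, recovering the factor $n\bigl(1-(1-p_\bj)^{m(n)}\bigr)$ and then invoking $L(s)<0$ for $s>s(\alpha)$. Your handling of the ceiling-versus-floor discrepancy via $1-(1-x)^{Cm}\le C\bigl(1-(1-x)^m\bigr)$ is in fact slightly more careful than the paper's ``approximately $m(n)$ elements'' phrasing, but the argument is the same.
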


\begin{proof}
  Notice that, for all
  $m\in \bbN$
  \begin{equation*}
    R(\bo, \ell)=\limsup_{n\to\infty}\pi[\sigma^n(\bo)|_{\ell_n}]\subset \bigcup_{n\ge
    m}\bigcup_{\substack{|\bi|=n\\ \sigma^k(\bo)|_{\ell_k}=\bi\text{ for some $k$}}}\pi[\bi],
  \end{equation*}
  and (up to an absolute constant) $\diam(\pi[\bi])=\lambda_{\bi}$. Hence, the Hausdorff dimension of $R(\bo,
  \ell)$ is bounded from above by all those $t$ for which
  \[
    \sum_{n=1}^\infty \sum_{|\bi|=n}\lambda_{\bi}^t\cdot\mathbb 1\{\bi=\sigma^k(\bo)|_n \text{ for
    some $k$ for which }n=\ell(k)\}<\infty.
  \]
  It is difficult to bound this series for a fixed $\bo$, but as we are only looking for bounds that
  hold for $\bbP_p$-almost every $\bo$, we aim to prove, instead, that
  \begin{equation}\label{eq:expectationest}
    \begin{split}
&\bbE\left(\sum_{n=1}^\infty \sum_{|\bi|=n}\lambda_{\bi}^t\cdot\mathbb 1\{\bi=\sigma^k(\bo)|_n
\text{ for some $k$ for which }n=\ell(k)\}\right)\\
&\qquad=\sum_{n=1}^\infty \sum_{|\bi|=n}\lambda_\bi^t\bbP_p (\bi=\sigma^k(\bo)|_n \text{ for some $k$
for which }n=\ell(k))<\infty.
    \end{split}
  \end{equation}
  Here the expectation is with respect to the Bernoulli measure $\bbP_p$. For any fixed $n$ there are
  $\#\ell^{-1}(n)$ values of $k$ for which $n=\ell(k)$. Among these options, there are
  \begin{equation}\label{eq:defmn}
    \left\lfloor\frac{\#\ell^{-1}(n)}{n}\right\rfloor=m(n)
  \end{equation}
  such that correspond to disjoint stretches of $\bo$. This is due to the fact that, denoting by
  $\min\ell^{-1}(n)=:M$ for any fixed $n$, there are approximately $m(n)$ elements $p$ in the
  collection
  \[
    P_k(n)=\{M+k, M+k+n, \dots, M+k+qn\mid q\in \mathbb N\text{ maximal s.t. } M+k+qn\in \ell^{-1}(n)\},
  \]
  where $k\in\{0,\ldots,n-1\}$. These correspond to disjoint stretches $\sigma^p(\bo)$. Hence we
  have, for any fixed $n$, $\bi$
  \begin{align}
    \bbP (\bi=\sigma^p(\bo)|_n \text{ for some $p$ for which }n=\ell(p))
  &\le \sum_{k=0}^{n-1}\bbP(\bi=\sigma^p(\bo)|_n \text{ for some }p\in P_k(n))\nonumber\\
  &\le n\left(1-(1-p_{\bi})^{m(n)}\right).\label{eq:standard}
  \end{align}
  Hence, when $t> s(\alpha)$, as $L$ is strictly decreasing,
  \[
    \limsup _{n\to \infty} \tfrac 1n \log \sum_{|\bj|=n}\lambda_{\bj}^{t}(1-(1-p_{\bj})^{m(n)})<0,
  \]
  then
  \[
    \sum_{n=1}^\infty n\sum_{|\bi|=n}\lambda_\bi^t(1-(1-p_{\bi})^{m(n)})<\infty,
  \]
  and in particular \eqref{eq:expectationest} and hence the upper bound to the Hausdorff dimension of
  $R(\bo, \ell)$ holds.
\end{proof}

\subsection{Upper bound for the complement}\label{sec:upperforcomp}

Now, we verify the upper bound of \cref{thm:main2}.

\begin{proposition}\label{thm:ubmain2}
  Let $\{f_i(x)=\lambda_iO_i+t_i\}_{i=1}^N$ be an IFS of similarities. Let $\alpha>0$ and assume that $\ell: \mathbb
  N\to \mathbb N$ is monotone increasing and it satisfies
  \begin{equation*}
    \liminf_{n\to \infty} \frac{\ell(n)}{\log n}=\tfrac 1\alpha.
  \end{equation*}
  Let $\bbP_p$ be a Bernoulli measure corresponding to a probability vector $p=(p_1, \dots, p_N)$.
  Then, for $\bbP_{p}$-almost every $\bo \in \Sigma$
  $$
  \dim_HR(\bo,\ell)^c\leq t(\alpha),
  $$
  where $t(\alpha)$ is defined in \cref{eq:salpha2}.
\end{proposition}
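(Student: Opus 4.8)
The strategy mirrors that of \cref{thm:ubmain1}: for $\bbP_p$-almost every $\bo$ I will produce an efficient cover of $R(\bo,\ell)\comp$ by symbolic cylinders and control its $t$-dimensional sum through a first-moment estimate over $\bo$ together with the Borel--Cantelli lemma. Write $E_m=\bigcap_{n\ge m}\big(\Sigma\setminus[(\sigma^n\bo)|_{\ell(n)}]\big)\subseteq\Sigma$. Since a point of $\Lambda$ lies in $R(\bo,\ell)\comp$ only if \emph{all} of its codings avoid the limsup set, we have $R(\bo,\ell)\comp\subseteq\bigcup_{m\in\bbN}\pi(E_m)$, and by countable stability of Hausdorff dimension it suffices to show $\dim_H\pi(E_m)\le t(\alpha)$ for each fixed $m$; here $t(\alpha)=\inf_{q<0}P_\alpha(q)$, the constraint in \eqref{eq:salpha2} being exactly the defining equation of $P_\alpha(q)$.

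The next step is to fix a subsequence $(n_k)$ realising the $\limsup$ attached to \eqref{eq:famousmn}, i.e.\ $\tfrac1{n_k}\log m(n_k)\to\alpha$, say $m(n_k)\ge e^{(\alpha-\varepsilon_k)n_k}$ with $\varepsilon_k\downarrow0$. If $\bi\in E_m$, then for every $n\ge m$ with $\ell(n)=n_k$ one has $\bi|_{n_k}\ne(\sigma^n\bo)|_{n_k}$; hence, as soon as $\ell^{-1}(n_k)\cap[m,\infty)\ne\emptyset$,
\[
  \pi(E_m)\ \subseteq\ \bigcup\Big\{\pi[\bj]\ :\ |\bj|=n_k,\ \bj\ne(\sigma^n\bo)|_{n_k}\text{ for every }n\ge m\text{ with }\ell(n)=n_k\Big\},
\]
a cover by sets of diameter $\le c\,\lambda_{\max}^{n_k}\to0$. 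Taking the expectation over $\bo$ and decomposing $\ell^{-1}(n_k)$ into the disjoint stretches $P_k(n)$ exactly as in \cref{thm:ubmain1} --- which exhibits at least $m(n_k)-m$ independent copies of the random word $\bo\mapsto(\sigma^{\bullet}\bo)|_{n_k}$ --- one obtains
\[
  \bbE_\bo\Big(\sum_{\text{cover at level }n_k}\diam(\cdot)^t\Big)\ \le\ c^t\sum_{|\bj|=n_k}\lambda_\bj^t\,(1-p_\bj)^{\,m(n_k)-m}.
\]

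The crux is to show this decays exponentially in $n_k$ whenever $t>t(\alpha)$. Given such a $t$, pick $q<0$ with $P_\alpha(q)<t$. From $(1-x)^n\le e^{-xn}$ in \eqref{eq:repeat} one gets, with the finite constant $C_q:=\sup_{y\ge0}e^{-y}y^{|q|}$, the inequality $(1-x)^n\le C_q(nx)^q$ for all $x\in(0,1]$, $n\in\bbN$; applied with $x=p_\bj$ it yields $\sum_{|\bj|=n_k}\lambda_\bj^t(1-p_\bj)^{m(n_k)-m}\le C_q(m(n_k)-m)^q\sum_{|\bj|=n_k}\lambda_\bj^t p_\bj^q$. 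Since the defining equation of $P_\alpha(q)$ gives $\sum_i\lambda_i^{P_\alpha(q)}p_i^q=e^{-\alpha q}$, hence $\sum_{|\bj|=n}\lambda_\bj^{P_\alpha(q)}p_\bj^q=e^{-\alpha q n}$, and since $t-P_\alpha(q)>0$ forces $\lambda_\bj^t\le\lambda_\bj^{P_\alpha(q)}\lambda_{\max}^{(t-P_\alpha(q))n_k}$, one reaches
\[
  \sum_{|\bj|=n_k}\lambda_\bj^t(1-p_\bj)^{m(n_k)-m}\ \le\ C\,\big((m(n_k)-m)e^{-\alpha n_k}\big)^q\,\lambda_{\max}^{(t-P_\alpha(q))n_k}
\]
for a constant $C$ depending only on $q$. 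Because $q<0$ and $(m(n_k)-m)e^{-\alpha n_k}\ge\tfrac12 e^{-\varepsilon_k n_k}$ along our subsequence (for $k$ large), the first factor is at most $2^{|q|}e^{\varepsilon_k|q|n_k}$, so the exponential rate of the bound tends to $(t-P_\alpha(q))\log\lambda_{\max}<0$ as $k\to\infty$. Thus the expected cover sums are summable over $k$; by Tonelli and Borel--Cantelli, for $\bbP_p$-almost every $\bo$ the level-$n_k$ cover sums tend to $0$ along a subsequence, so $\mathcal H^t(\pi(E_m))=0$. Intersecting these full-measure events over a sequence $t_j\downarrow t(\alpha)$ and over $m\in\bbN$ gives $\dim_H R(\bo,\ell)\comp=\sup_m\dim_H\pi(E_m)\le t(\alpha)$ almost surely.

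The main obstacle is the key exponential estimate just displayed: the idea is that passing to a negative exponent $q$ linearises the survival probability $(1-p_\bj)^{m(n)}$ through $(1-x)^n\le C_q(nx)^q$ and lets one factor out the pressure $P_\alpha(q)$, but this only works along a subsequence realising $\limsup\tfrac1n\log m(n)=\alpha$, since with the negative power $q$ one needs $m(n_k)e^{-\alpha n_k}$ to be at most subexponentially small. The remaining ingredients --- the reduction $R(\bo,\ell)\comp\subseteq\bigcup_m\pi(E_m)$ and the bookkeeping that discarding the $\le m$ smallest indices of $\ell^{-1}(n_k)$ still leaves $\approx m(n_k)$ independent stretches --- are routine, the latter being identical to the corresponding step in the proof of \cref{thm:ubmain1}.
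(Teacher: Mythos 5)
Your proposal is correct, and while it shares the paper's overall framework (cover $R(\bo,\ell)^c$ by the unvisited level-$n_k$ cylinders along a subsequence realising $\limsup\frac1n\log m(n)=\alpha$, then bound the expected $t$-dimensional cover sum and conclude by a first-moment/Borel--Cantelli argument), the key analytic step is handled by a genuinely different and cleaner device. The paper splits $\Sigma_{n_p}$ into the sets $\mathcal{G'}^\epsilon_{n_p}$ and $\mathcal{V'}^\epsilon_{n_p}$ of \cref{eq:modifGV}, kills the first via $(1-p_{\bi})^{m}\le e^{-p_\bi m}\le e^{-(1+\epsilon)^{n_p}}$, and controls the second by a multinomial/entropy count tied to the variational principle of \cref{thm:varprinc}, at the price of working with the perturbed parameter $\alpha+2\log(1+\epsilon)$ and invoking continuity of $\alpha\mapsto\inf_qP_\alpha(q)$ at the end. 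You instead linearise the survival probability in one stroke: for $q<0$, $(1-p_\bj)^{M}\le e^{-Mp_\bj}\le C_q(Mp_\bj)^q$ with $C_q=\sup_{y\ge 0}e^{-y}y^{|q|}<\infty$, after which multiplicativity of $\sum_i\lambda_i^{P_\alpha(q)}p_i^q=e^{-\alpha q}$ does the rest, and the only delicate point --- that $(Me^{-\alpha n_k})^q$ must not blow up --- is exactly why one must work along the subsequence where $m(n_k)e^{-\alpha n_k}$ is at most subexponentially small, which you identify correctly. Your route avoids the large-deviation bookkeeping and the $\epsilon$-perturbation of $\alpha$ entirely; the paper's route has the mild advantage of reusing machinery ($\cG'/\cV'$, the variational principle) that it needs elsewhere anyway. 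Minor points, none of which is a gap: the reduction $R(\bo,\ell)^c\subseteq\bigcup_m\pi(E_m)$ and the ``$m(n_k)-m$ disjoint stretches'' correction are handled more carefully than in the paper's own write-up, and your argument, like the paper's, only yields $\mathcal H^t(\pi(E_m))=0$ for $t>\max\{t(\alpha),0\}$, which is all that is meant by the dimension bound.
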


\begin{proof}
  Notice that,
  \[
    R(\bo, \ell)^c= \bigcup_{m}\bigcap_{n\ge m}\bigcup_{\substack{|\bi|=n\\
    \sigma^k(\bo)|_{\ell_k}\neq\bi\text{ for every $k$}}}\pi[\bi],
  \]
  Let $\epsilon>0$ be arbitrary but fixed. Let $(n_p)$ be the subsequence for which
  $$
  m(n_p)e^{\alpha n_p}\geq(1+\epsilon)^{-n_p},
  $$
  where $m(n)$ is as in \cref{eq:defmn}. So, for every $m\geq1$
  \[
    R(\bo, \ell)^c\subset \bigcup_{p=m}^\infty\bigcup_{\substack{|\bi|=n_p\\
    \sigma^k(\bo)|_{\ell_k}\neq\bi\text{ for every $k$}}}\pi[\bi].
  \]
  So, similarly to the previous case, the Hausdorff dimension of $R(\bo, \ell)^c$ is bounded from above by $t$ if
  $$
  \sum_{p=1}^\infty\sum_{|\bi|=n_p}\lambda_{\bi}^t\cdot\mathbb 1\{\bi\neq\sigma^k(\bo)|_{n_p} \text{
  for every $k$ for which }n_p=\ell(k)\}<\infty.
  $$
  To show that $\dim_HR(\bo,\ell)^c\leq t$ for $\bbP_p$-almost every $\bo$, it is enough to show that
  \begin{multline*}
    \bbE\left(\sum_{p=1}^\infty\sum_{|\bi|=n_p}\lambda_{\bi}^t\cdot\mathbb
      1\{\bi\neq\sigma^k(\bo)|_{n_p} \text{ for every $k$ for which
    }n_p=\ell(k)\}\right) \\
    \leq\sum_{p=1}^\infty\sum_{|\bi|=n_p}\lambda_{\bi}^t\cdot(1-p_{\bi})^{m(n_p)}<\infty.
  \end{multline*}
  For $\epsilon>0$, let us now define
  \begin{equation}\label{eq:modifGV}
    \mathcal{G'}_n^\epsilon=\{\bj\in\Sigma_n:p_{\bi}m(n)\geq(1+\epsilon)^n\}\quad\text{and}\quad
    \mathcal{V'}_n^\epsilon=\{\bj\in\Sigma_n:p_{\bi}m(n)<(1+\epsilon)^n\}.
  \end{equation}
  Then by choosing
  $t=(1+\epsilon)P_{\alpha+2\log(1+\epsilon)}(q^*(\alpha)+2\log(1+\epsilon))=(1+\epsilon)\inf_{q<0}P_{\alpha+2\log(1+\epsilon)}(q)$
  and \cref{eq:repeat}
  \[\begin{split}
    \sum_{|\bi|=n_p}\lambda_{\bi}^t\cdot(1-p_{\bi})^{m(n_p)}
  &\leq\sum_{\bi\in\mathcal{G'}_{n_p}^\epsilon}\lambda_{\bi}^te^{-p_{\bi}m(n_p)}+\sum_{\bi\in\mathcal{V'}_{n_p}^\epsilon}\lambda_{\bi}^t\\
  &\leq e^{-(1+\epsilon)^{n_p}}N^{n_p}+\sum_{\substack{k_1,\ldots,k_N\geq0\\ k_1+\cdots+k_N=n_p \\ n_p\alpha+\sum_{i}k_i\log p_i<2n_p\log(1+\epsilon)}}\binom{n}{k_1,\ldots,k_N}\left(\lambda_1^{k_1}\cdots\lambda_N^{k_N}\right)^t\\
  &\leq e^{-(1+\epsilon)^{n_p}}N^{n_p}+\binom{n_p+N-1}{n_p}e^{n_p\left(h_{\bbQ_{q^*(\alpha+2\log(1+\epsilon))}+t\sum_{i}q_{i,\alpha+2\log(1+\epsilon)}^*\log\lambda_i}\right)}\\
  &\leq e^{-(1+\epsilon)^{n_p}}N^{n_p}+\binom{n_p+N-1}{n_p}\lambda_{\max}^{n_p\epsilon P_{\alpha+2\log(1+\epsilon)}(q^*(\alpha+2\log(1+\epsilon))}.
  \end{split}\]
  The right-hand side forms a convergent series, and since $\epsilon>0$ was arbitrary and the map
  $\alpha\to\inf_{q}P_\alpha(q)$ is continuous, the upper bound follows.
\end{proof}

\section{Lower bounds for ``large'' \texorpdfstring{$\alpha$}{a}}

\label{sec:lowerlarge}

\subsection{The region \texorpdfstring{$\alpha>\alpha_2$}{large a}}

We begin this section with a proof that for high $\alpha$, the dynamical covering set turns out to
be the entire attractor $\Lambda$. This is captured by the
following theorem. Recall the notation $\alpha_2=-\log p_{\min}$. 
\begin{theorem}\label{thm:fullCover}
  Let $\{f_i(x)=\lambda_iO_i+t_i\}_{i=1}^N$ be an IFS of similarities with attractor $\Lambda$ and
  let $\bbP_p$ be a Bernoulli measure corresponding to a probability vector $p=(p_1, \dots, p_N)$.
  Let $\alpha>0$ be such that $\alpha>\alpha_2$ and assume that $\ell: \mathbb
  N\to \mathbb N$ is monotone increasing and it satisfies
  \begin{equation*}
    \liminf_{n\to \infty} \frac{\ell(n)}{\log n}=\tfrac 1\alpha.
  \end{equation*}
  Then, for
  $\bbP_p$-almost all $\bo\in\Sigma$,
  \[
    R(\bo, \ell) =  \Lambda.
  \]
\end{theorem}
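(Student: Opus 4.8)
The plan is to run a first Borel--Cantelli argument along a subsequence of block-lengths where $m(n)$ is essentially maximal, reusing the disjoint-block independence structure already isolated in the proof of Proposition \ref{thm:ubmain1}. Since $R(\bo,\ell)\subseteq\Lambda$ is automatic, it suffices to show that for $\bbP_p$-almost every $\bo$ the set $\{\bi\in\Sigma:\bi\in[(\sigma^n\bo)|_{\ell(n)}]\text{ i.o.}\}$ is all of $\Sigma$, i.e.\ that for a.e.\ $\bo$ \emph{every} $\bi$ is covered infinitely often. The point is that I will actually prove the uniform statement: a.e.\ $\bo$ has the property that for all large $k$, \emph{every} word of a suitable length $n_k$ occurs as a block of $\bo$ at a position in $\ell^{-1}(n_k)$; the uniformity over words is exactly what lets the quantifier ``for all $\bi$'' be pulled outside the ``almost every $\bo$''.

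First I would fix an increasing sequence $(n_k)$ with $\frac{\log m(n_k)}{n_k}\to\alpha$, which exists because $\limsup_{n\to\infty}\frac{\log m(n)}{n}=\alpha$ (recorded after \eqref{eq:famousmn}); in particular $m(n_k)\to\infty$. For each $n_k$, exactly as in the proof of Proposition \ref{thm:ubmain1} via the residue classes $P_k(n_k)$, I extract positions $p_1<\dots<p_{m(n_k)}$ in $\ell^{-1}(n_k)$ that are pairwise at distance $\ge n_k$, so the blocks $W_i:=\sigma^{p_i}(\bo)|_{n_k}=\bo_{p_i+1}\cdots\bo_{p_i+n_k}$ depend on disjoint coordinate sets and hence, under the Bernoulli measure $\bbP_p$, are i.i.d.\ with the law of a length-$n_k$ block. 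For a fixed $\bj\in\Sigma_{n_k}$ the probability that none of the $W_i$ equals $\bj$ is $(1-p_{\bj})^{m(n_k)}\le e^{-p_{\min}^{n_k}m(n_k)}$; and since $\alpha>\alpha_2=-\log p_{\min}$ we have $p_{\min}^{n_k}m(n_k)=e^{n_k(\alpha-\alpha_2+o(1))}\ge e^{\delta n_k}$ for $\delta:=(\alpha-\alpha_2)/2>0$ and all large $k$. A union bound over the $N^{n_k}$ words of length $n_k$ gives
\[
  \bbP_p\left(\text{some }\bj\in\Sigma_{n_k}\text{ has }\bj\neq\sigma^{p_i}(\bo)|_{n_k}\text{ for all }i\le m(n_k)\right)\ \le\ N^{n_k}e^{-p_{\min}^{n_k}m(n_k)}\ \le\ N^{n_k}e^{-e^{\delta n_k}},
\]
and $\sum_k N^{n_k}e^{-e^{\delta n_k}}<\infty$ since the doubly exponential factor dominates. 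By Borel--Cantelli, for $\bbP_p$-a.e.\ $\bo$ there is $K(\bo)$ such that for every $k\ge K(\bo)$ each word of length $n_k$ occurs as $\sigma^{p}(\bo)|_{n_k}$ for some $p\in\ell^{-1}(n_k)$.

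To finish, fix any $\bi\in\Sigma$ and any such $\bo$. For each $k\ge K(\bo)$, applying the previous paragraph to the prefix $\bi|_{n_k}$ yields $p_k\in\ell^{-1}(n_k)$ with $\sigma^{p_k}(\bo)|_{n_k}=\bi|_{n_k}$, i.e.\ $\bi\in[(\sigma^{p_k}\bo)|_{\ell(p_k)}]$. Because $\ell$ is monotone increasing and the $n_k$ are distinct, the $p_k$ are pairwise distinct, so $\bi$ lies in infinitely many of the cylinders $[(\sigma^n\bo)|_{\ell(n)}]$. As $\bi$ was arbitrary, $\pi(\Sigma)=\Lambda\subseteq R(\bo,\ell)$, hence equality.

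The one genuinely delicate point — and the only place where the hypothesis $\alpha>\alpha_2$ (rather than merely $\alpha>\alpha_1$) enters — is arranging decay fast enough to absorb the union bound over all $N^{n_k}$ target words. This forces the ``no match'' probability to be super-exponentially small, which needs $p_{\min}^{n_k}m(n_k)\to\infty$, equivalently $p_{\min}e^\alpha>1$; below $\alpha_2$ the crudest estimate fails and one only recovers full $\cH^{s_0}$-measure rather than literal equality. Everything else is routine once the disjoint-block independence from the proof of Proposition \ref{thm:ubmain1} is in place.
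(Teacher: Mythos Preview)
Your proof is correct and follows essentially the same route as the paper's: choose a subsequence $(n_k)$ along which $\tfrac{\log m(n_k)}{n_k}\to\alpha$, use disjoint blocks of length $n_k$ inside $\ell^{-1}(n_k)$ to get i.i.d.\ length-$n_k$ words, apply the union bound over all $N^{n_k}$ targets, and conclude by the first Borel--Cantelli lemma using the doubly-exponential decay coming from $p_{\min}e^\alpha>1$. The only cosmetic differences are that the paper rebuilds the disjoint-block structure in the proof itself rather than citing Proposition~\ref{thm:ubmain1}, and carries an extra harmless factor of $1/n_p$ in the exponent.
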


\begin{proof}
  Let $\epsilon>0$ be such that $p_{\min}e^{\alpha}(1-\epsilon)>1$. Let $m(n)$ be as in
  \cref{eq:famousmn}, and let $\{n_p\}$ be the subsequence such that $\limsup_{n\to
  \infty}\frac{m(n)}{n}=\lim_{p\to\infty}\frac{m(n_p)}{n_p}=\alpha$. Without loss of generality we
  may assume that $m(n_p)\geq e^{\alpha n_p}(1-\epsilon)^{n_p}$.

  We use a Borel-Cantelli argument to show that for typical $\bo$, for sufficiently large $p$ and
  for every cylinder $[\bj]$, where
  $\bj\in\Sigma_{n_p}$, there exists a $k$ such that $\ell(k) = n_p$ and
  $\sigma^k(\bo)|_{\ell(k)}=\bj$. In particular, we show that the set
  $$
  \bigcap_{k=1}^\infty\bigcup_{p=k}^\infty\bigcup_{\bj\in\Sigma_{n_p}}\{\bo:(\sigma^q\bo)|_{n_p}\neq\bj\text{
  for every $q\in\bbN$ with $\ell(q)=n_p$}\}
  $$
  has zero $\bbP$ measure. This then shows that $R(\bo, \ell) = \Lambda$.

  First write $M_p$ for the least integer and $M_p'$ for the last integer such that
  $\ell(M_p)=\ell(M_p')=n_p$.  We want to estimate the probability of the event that there exists
  at least one cylinder $\bj\in\Sigma_{n_p}$ that does not appear in $\bo$ between $M_p$ and
  $M_p'$. Denote this probability by $P_p$. Clearly, it is bounded by
  \begin{align*}
    P_p
    &\leq \sum_{\bj\in\Sigma_{n_p}}\bbP\{\bo\in\Sigma : (\sigma^k\bo)|_{n_p}\neq \bj, M_p\leq k <
    M_p'\}\\
    &\leq \sum_{\bj\in\Sigma_{n_p}}\bbP\left\{\bo\in\Sigma : \sigma^{M_p+n_pk}\bo|_{n_p}\neq \bj, 0\leq k
    <\frac{M_{p}'-M_p}{n_p}\right\}\\
    &=\sum_{\bj\in\Sigma_{n_p}}\left( \prod_{k=0}^{(M_p'-M_p)/n_p-1} \left(1-\bbP\left\{\bo\in\Sigma :
    \sigma^{M_p+n_pk}\bo|_{n_p}= \bj\right\} \right)\right)\\
    &=\sum_{\bj\in\Sigma_{n_p}}\left( 1-\bbP([\bj]) \right)^{(M_p'-M_p)/n_p}\\
    &\leq N^{n_p} \left( 1-p_{\min}^{n_p} \right)^{m(n_p)/n_p}
    =N^{n_p} \exp\log\left( 1-p_{\min}^{n_p} \right)^{m(n_p)/n_p}
    \intertext{and since $\log(1-x)\leq -x$ for all $x<1$,}
    &\leq N^{n_p} \exp \left(-\frac{m(n_p)p_{\min}^{n_p}}{n_p}\right)\leq N^{n_p} \exp
    \left(-\frac{e^{\alpha n_p}(1-\epsilon)^{n_p}p_{\min}^{n_p}}{n_p}\right).
  \end{align*}
  By assumption, $e^\alpha p_{\min}(1-\epsilon)>0$, and so, this bound decreases
  super-exponentially and $\sum_p P_p < \infty$. By the Borel-Cantelli
  lemmas we see that for almost every $\bo$ there exists a level $K_{\bo}$ such that for every cylinder $[\bj]$ where
  $\bj\in\Sigma_{n_p}$, $p>K_{\bo}$ there exists a $k$ such that $\ell(k) = n_p$ and
  $\sigma^k(\bo)|_{\ell(k)}=\bj$. It follows that $R(\bo, \ell)=\Lambda$, as required.
\end{proof}

\subsection{The region \texorpdfstring{$\alpha_1<\alpha<\alpha_2$}{intermediate probabilities}}\label{sec:lbver1}

In this region, we will study the Hausdorff dimension of $R(\bo,\ell)^c$ via the Bernoulli measures
of the set $R(\bo,\ell)$ with respect to measures for which $\sum_iq_i\log p_i+\alpha<0$. Recall that 
\[
  \alpha_1=- \sum_{i=1}^N \lambda_i^{s_0}\log p_i, 
\]
where $s_0=\dim_H\Lambda$. Also recall $\alpha_2=-\log p_{\min}$.  

\begin{theorem}\label{thm:berR0}
  Let $\alpha_1<\alpha$ and further assume that $\ell(n)$ satisfies
  \[
    \liminf_{n\to\infty}\frac{\ell(n)}{\log n} = \frac{1}{\alpha}.
  \]
  Let $\nu$ be a Bernoulli measure with probability
  vector $q=(q_1,\ldots,q_N)$ such that $\sum_iq_i\log p_i+\alpha<0$. Then, $\nu(R(\bo,\ell)) =0$ for
  $\bbP$-almost every $\bo$.
\end{theorem}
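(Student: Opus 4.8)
The plan is to transfer the question to the symbolic space, integrate it against the law of $\bo$, and then interchange the two integrals so that the first Borel--Cantelli lemma applies with summability supplied by the strong law of large numbers. Write $\bbP=\bbP_p$ for the law of $\bo$, let $\bbP_q$ denote the Bernoulli measure on $\Sigma$ with probability vector $q=(q_1,\ldots,q_N)$ (so that $\nu=\pi_*\bbP_q$), and set
\[
  \widetilde R(\bo,\ell)=\bigcap_{k\ge 1}\bigcup_{n\ge k}\bigl[(\sigma^n\bo)|_{\ell(n)}\bigr]\subseteq\Sigma ,
\]
so that $R(\bo,\ell)=\pi(\widetilde R(\bo,\ell))$. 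Since $\pi$ is injective off a $\bbP_q$-null set (the points with non-unique coding; see the remark following the definition of $\pi$), we have $\nu(R(\bo,\ell))=\bbP_q(\widetilde R(\bo,\ell))$, and as $\bo\mapsto\bbP_q(\widetilde R(\bo,\ell))$ is non-negative and (by Tonelli's theorem) measurable, the assertion follows once we show $\int_{\Sigma}\bbP_q(\widetilde R(\bo,\ell))\,d\bbP(\bo)=0$.

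To estimate this integral, fix $\bi\in\Sigma$ and put $E_n(\bi)=\{\bo\in\Sigma:(\sigma^n\bo)|_{\ell(n)}=\bi|_{\ell(n)}\}$, so that $\bbP(E_n(\bi))=p_{\bi|_{\ell(n)}}$ and $\{\bo:\bi\in\widetilde R(\bo,\ell)\}=\limsup_n E_n(\bi)$. Grouping the indices $n$ according to the value $\ell(n)$ (the fibres $\ell^{-1}(k)$ being finite since $\ell$ is increasing and diverges),
\[
  \sum_{n\ge 1}\bbP(E_n(\bi))=\sum_{k\ge 1}\#\ell^{-1}(k)\,p_{\bi|_k}.
\]
I would bound the two factors separately. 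From $\liminf_n\ell(n)/\log n=1/\alpha$ one gets, for every $\delta>0$ and all large $k$, that $\#\ell^{-1}(k)\le\#\{n:\ell(n)\le k\}\le e^{(\alpha+\delta)k}$, since $\ell(n)\le k$ with $n$ large forces $(\tfrac1\alpha-\delta)\log n\le\ell(n)\le k$. On the other hand, by the strong law of large numbers for the i.i.d.\ sequence $(\log p_{i_j})_j$ under $\bbP_q$, for $\bbP_q$-almost every $\bi$ one has $\tfrac1k\log p_{\bi|_k}\to\sum_{i=1}^Nq_i\log p_i$, hence $p_{\bi|_k}\le e^{(\sum_iq_i\log p_i+\delta)k}$ for all large $k$. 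Using the hypothesis $\sum_iq_i\log p_i+\alpha<0$, fix $\delta>0$ with $\alpha+\sum_iq_i\log p_i+2\delta<0$; then $\#\ell^{-1}(k)\,p_{\bi|_k}\le e^{(\alpha+\sum_iq_i\log p_i+2\delta)k}$ for all large $k$, the series above converges, and the first Borel--Cantelli lemma gives $\bbP(\limsup_nE_n(\bi))=0$ for $\bbP_q$-almost every $\bi$. Applying Tonelli's theorem to the Borel set $F=\{(\bo,\bi):\bo\in\limsup_nE_n(\bi)\}=\{(\bo,\bi):\bi\in\widetilde R(\bo,\ell)\}$ then yields
\[
  \int_{\Sigma}\bbP_q\bigl(\widetilde R(\bo,\ell)\bigr)\,d\bbP(\bo)=(\bbP\times\bbP_q)(F)=\int_{\Sigma}\bbP\bigl(\limsup_nE_n(\bi)\bigr)\,d\bbP_q(\bi)=0 ,
\]
as required.

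The one point that needs care — and the reason the argument is not the naive first-moment computation — is that the Fubini interchange must be performed \emph{before} estimating. Bounding $\int_{\Sigma}\bigl(\sum_k\#\ell^{-1}(k)p_{\bi|_k}\bigr)\,d\bbP_q(\bi)=\sum_k\#\ell^{-1}(k)\bigl(\sum_iq_ip_i\bigr)^k$ directly converges only under $\alpha+\log\sum_iq_ip_i<0$, which by Jensen's inequality ($\log\sum_iq_ip_i\ge\sum_iq_i\log p_i$) is in general strictly stronger than the hypothesis $\alpha+\sum_iq_i\log p_i<0$. Thus one must exploit the \emph{typical} exponential decay of $p_{\bi|_k}$ at the entropy-type rate $-\sum_iq_i\log p_i$ rather than the decay of its mean, which is exactly what moving the $\bbP_q$-average to the outside and invoking the SLLN accomplishes. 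Everything else (measurability of $F$, a countable combination of cylinder rectangles; finiteness of $\ell^{-1}(k)$; the elementary growth bound on $\#\ell^{-1}(k)$) is routine.
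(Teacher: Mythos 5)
Your proof is correct, but it runs the Fubini/Borel--Cantelli machinery in the opposite direction from the paper, and the comparison is worth making explicit. The paper fixes the cylinder level $n$, bounds $\bbE_{\bo}$ of the $\nu$-mass of the visited level-$n$ cylinders by $\sum_{|\bi|=n}q_{\bi}\,n\bigl(1-(1-p_{\bi})^{m(n)}\bigr)$, and then must split $\Sigma_n$ into the typical words $\cG_n^\epsilon$ and the atypical words $\cV_n^\epsilon$, controlling the latter by a Cram\'er-type large deviation estimate for $\nu$; Borel--Cantelli is then applied in the $\nu$-variable over the levels $n$. You instead fix the target $\bi$ for $\bbP_q$-a.e.\ $\bi$, apply Borel--Cantelli in the $\bbP$-variable over the orbit times $n$ with $\bbP(E_n(\bi))=p_{\bi|_{\ell(n)}}$, and only need the strong law of large numbers to see that $p_{\bi|_k}$ decays at the entropy rate $e^{k\sum_iq_i\log p_i}$; Tonelli then converts the pointwise statement into the a.s.\ statement in $\bo$. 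Your remark about why the naive first-moment bound fails (Jensen forces $\log\sum_iq_ip_i\ge\sum_iq_i\log p_i$) is exactly the obstruction the paper's good/bad decomposition is designed to circumvent, and your route replaces that quantitative large-deviation input with the purely qualitative SLLN, which suffices because no rate is needed for a measure-zero conclusion. The remaining ingredients --- the reduction $\nu(R(\bo,\ell))=\bbP_q(\widetilde R(\bo,\ell))$ via almost-injectivity of $\pi$ (which the paper also uses implicitly, since its argument is entirely symbolic), the finiteness and exponential growth bound on $\#\ell^{-1}(k)$ from the liminf hypothesis, and the measurability of $F$ as a countable combination of cylinder rectangles --- are all handled correctly.
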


First, we prove the corollary of this result.

\begin{corollary}\label{thm:notfullregion}
  Let $\alpha_1<\alpha<\alpha_2$, and assume
  that $\ell(n)$ satisfies
  $\liminf_{n\to\infty}\ell(n)/\log n = 1/\alpha$. Then, $\dim_H(R(\bo,\ell)^c)
  =\inf_{q<0}P_\alpha(q)<s_0$ and $\cH^{s_0}(R(\bo,\ell))=\cH^{s_0}(\Lambda)$, where $s_0 = \dimh
  \Lambda$ for $\bbP$-almost every $\bo$.
\end{corollary}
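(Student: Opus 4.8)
The plan is to prove \cref{thm:notfullregion} by pairing the dimension upper bound of \cref{thm:ubmain2} with a matching lower bound extracted from \cref{thm:berR0} and the variational principle \cref{eq:varprinc2}, and then reading off the Hausdorff measure statement from strictness of the dimension bound.

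First I would record the upper bound and the strict inequality. By \cref{thm:ubmain2}, $\dim_H R(\bo,\ell)^c \le t(\alpha) = \inf_{q<0}P_\alpha(q)$ for $\bbP$-almost every $\bo$, where $t(\alpha)$ is the quantity in \cref{eq:salpha2}. To see that $\inf_{q<0}P_\alpha(q) < s_0$, note that the interval $(\alpha_1,\alpha_2)$ is non-empty only when the $p_i$ are not all equal, so by \cref{thm:almostlegendre} (and its proof) the map $q\mapsto P_\alpha(q)$ is strictly convex with a unique minimiser $q^* = q^*(\alpha)$ over $\bbR$; differentiating the defining relation at $q=0$ as in \cref{eq:Pder} and using $P_\alpha(0)=s_0$ gives $P_\alpha'(0) = (\alpha-\alpha_1)/(-\sum_i \lambda_i^{s_0}\log\lambda_i) > 0$, whence $q^* < 0$ and therefore $\inf_{q<0}P_\alpha(q) = P_\alpha(q^*) < P_\alpha(0) = s_0$.

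For the lower bound I would use \cref{thm:berR0}: any Bernoulli measure $\nu$ with probability vector $q$ satisfying $\sum_i q_i \log p_i + \alpha < 0$ has $\nu(R(\bo,\ell)) = 0$, so $\nu$ is carried by $R(\bo,\ell)^c$ for $\bbP$-almost every $\bo$, and by exact dimensionality of $\nu$ under the open set condition together with the mass distribution principle, $\dim_H R(\bo,\ell)^c \ge (-\sum_i q_i \log q_i)/(-\sum_i q_i \log\lambda_i)$. By \cref{thm:varprinc}, the supremum of this ratio over probability vectors $q$ with $\sum_i q_i \log p_i + \alpha \le 0$ equals $\inf_{q<0}P_\alpha(q)$ and is attained at $q^*_\alpha$ with $q^*_{i,\alpha} = \lambda_i^{P_\alpha(q^*)}(p_i e^\alpha)^{q^*}$. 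The main obstacle is that this optimal vector satisfies $\sum_i q^*_{i,\alpha}\log p_i + \alpha = 0$ (immediate from $P_\alpha'(q^*)=0$ and \cref{eq:Pder}), so it lies on the boundary of the feasible region and \cref{thm:berR0} does not apply directly to it. I would circumvent this by perturbing towards a vertex: fix $i_0$ with $p_{i_0} = p_{\min}$ and set $q^{(n)} = (1-\tfrac1n)q^*_\alpha + \tfrac1n e_{i_0}$, where $e_{i_0}$ is the vector with a $1$ in coordinate $i_0$ and $0$ elsewhere. Then $q^{(n)}$ has all positive entries, $q^{(n)}\to q^*_\alpha$, and $\sum_i q^{(n)}_i\log p_i + \alpha = \tfrac1n(\alpha-\alpha_2) < 0$ precisely because $\alpha<\alpha_2$. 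Applying \cref{thm:berR0} to each $q^{(n)}$ and intersecting the countably many resulting full-measure sets, I obtain for $\bbP$-almost every $\bo$ that $\dim_H R(\bo,\ell)^c \ge (-\sum_i q^{(n)}_i \log q^{(n)}_i)/(-\sum_i q^{(n)}_i \log\lambda_i)$ for every $n$; letting $n\to\infty$ and using continuity of $q\mapsto (-\sum_i q_i \log q_i)/(-\sum_i q_i \log\lambda_i)$ on the simplex recovers $\inf_{q<0}P_\alpha(q)$. Together with the previous paragraph this yields $\dim_H R(\bo,\ell)^c = \inf_{q<0}P_\alpha(q) < s_0$.

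Finally, $\dim_H R(\bo,\ell)^c < s_0$ forces $\cH^{s_0}(R(\bo,\ell)^c) = 0$; since $0 < \cH^{s_0}(\Lambda) < \infty$ under the open set condition (Hutchinson \cite{Hutchinson1981}) and $\Lambda$ is the disjoint union of $R(\bo,\ell)$ and $R(\bo,\ell)^c$, we conclude $\cH^{s_0}(R(\bo,\ell)) = \cH^{s_0}(\Lambda)$ for $\bbP$-almost every $\bo$, completing the proof of \cref{thm:notfullregion}.
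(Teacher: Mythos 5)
Your proof is correct and follows the same overall architecture as the paper's: upper bound from \cref{thm:ubmain2}, lower bound by pushing Bernoulli measures onto $R(\bo,\ell)^c$ via \cref{thm:berR0} and optimising through the variational principle \cref{thm:varprinc}, and the Hausdorff measure claim from strictness of the upper bound. The one substantive difference is that you correctly identify and repair a point the paper elides: the optimiser $q^*_{i,\alpha}=\lambda_i^{P_\alpha(q^*)}(p_ie^\alpha)^{q^*}$ of \cref{eq:varprinc2} satisfies $\sum_i q^*_{i,\alpha}\log p_i+\alpha=0$ (since $P_\alpha'(q^*)=0$ together with \cref{eq:Pder}), not the strict inequality asserted in the paper's proof, so \cref{thm:berR0} does not apply to it directly. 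Your perturbation $q^{(n)}=(1-\tfrac1n)q^*_\alpha+\tfrac1n e_{i_0}$ with $p_{i_0}=p_{\min}$ gives $\sum_i q^{(n)}_i\log p_i+\alpha=\tfrac1n(\alpha-\alpha_2)<0$, which is exactly where the hypothesis $\alpha<\alpha_2$ enters, and the countable intersection plus continuity of the entropy-to-Lyapunov ratio recovers $\inf_{q<0}P_\alpha(q)$ in the limit. Your derivation of $\inf_{q<0}P_\alpha(q)<s_0$ from $P_\alpha'(0)>0$ for $\alpha>\alpha_1$ and strict convexity is also sound (and note the paper's closing display contains sign typos, writing the constraint as $\sum_i q_i\log p_i+\alpha>0$ and the value as $\inf_{q>0}P_\alpha(q)$, where the correct versions appear in your write-up). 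In short: same route, but your version closes a small gap in the published argument.
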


\begin{proof}
  For $\alpha$ in this range, by \cref{thm:varprinc} and \cref{thm:ubmain2}, 
  \[\dim_H(R(\bo,\ell)^c) \leq\inf_{q<0}P_\alpha(q)<s_0\]
  and so, the
  claim for the Hausdorff measure of $R(\bo,\ell)$ follows.

  Let $q=(q_1,\ldots,q_N)$ be the probability vector, where the supremum in \cref{eq:varprinc2}
  is attained. Then by \cref{thm:varprinc}, $\sum_iq_i\log p_i+\alpha<0$, and so, by
  \cref{thm:berR0},
  $$
  \dim_H(R(\bo,\ell)^c)\geq\sup\left\{\frac{-\sum_{i=1}^Nq_i\log q_i}{-\sum_{i=1}^Nq_i\log\lambda_i}
  : \sum_{i=1}^Nq_i\log p_i+\alpha>0\right\}=\inf_{q>0}P_\alpha(q).
  $$
\end{proof}

\begin{proof}[Proof of \cref{thm:berR0}]
  The proof is similar to \cref{sec:upper}. That is, by Borel-Cantelli's lemma, it is enough to show
  that for $\bbP$-almost every $\bo$
  \[
    \sum_{n=1}^\infty \sum_{|\bi|=n}q_{\bi}\cdot\mathbb 1\{\bi=\sigma^k(\bo)|_n \text{ for some $k$
    for which }n=\ell(k)\}<\infty.
  \]
  Similarly to \cref{eq:expectationest} and \cref{eq:standard}, it is enough to show that
  \[
    \sum_{n=1}^\infty \sum_{|\bi|=n}q_{\bi}\cdot n(1-(1-p_{\bi}))^{m(n)}<\infty.
  \]
  Let $\epsilon>0$ be such that $\sum_{i=1}^Nq_i\log p_i+\alpha<2\log(1-\epsilon)<0$. Recalling the
  definition of $\mathcal{G}_n^\epsilon$ and $\mathcal{V}_n^\epsilon$ from \cref{eq:GV} together
  with \cref{eq:setest} and \cref{eq:setest2}, we get for sufficiently large $n$ that
  \[\begin{split}
    \sum_{|\bi|=n}q_{\bi}\cdot n(1-(1-p_{\bi}))^{m(n)}
    &\leq \sum_{\bi\in\mathcal{G}_n^\epsilon}q_{\bi}m(n)p_{\bi}+\sum_{\bi\in\mathcal{V}_n^\epsilon}q_{\bi}\\
    &\leq \tfrac1\epsilon(1-\epsilon)^n+\nu\left(\left\{\bi: p_{\bi|_n}m(n)>(1-\epsilon)^n\right\}\right)
    \intertext{by $m(n)\leq(1-\epsilon)^{-n}e^{\alpha n}$ for $n$ sufficiently large}
    &\leq \tfrac1\epsilon(1-\epsilon)^n+\nu\left(\left\{\bi: p_{\bi|_n}e^{n\alpha}>(1-\epsilon)^{2n}\right\}\right)\\
    &\leq \tfrac1\epsilon(1-\epsilon)^n+e^{-\delta n},
  \end{split}\]
  where the last inequality follows by the large deviation principle for some
  $\delta=\delta(q,\epsilon)$. Then $\sum_{n=1}^\infty
  \sum_{|\bi|=n}q_{\bi}\cdot n(1-(1-p_{\bi}))^{m(n)}<\infty$ clearly follows.
\end{proof}

\subsection{The region \texorpdfstring{$\alpha_0<\alpha<\alpha_1$}{low values}}

Similarly to \cref{sec:lbver1}, we will study the Hausdorff dimension of $R(\bo,\ell)$ via the
Bernoulli measures of the set $R(\bo,\ell)^c$ with respect to measures for which $\sum_iq_i\log
p_i+\alpha>0$.

\begin{theorem}\label{thm:berRc0}
  Let $\alpha>0$ be such that $\alpha<\alpha_1$ and further assume that $\ell(n)$ satisfies
  $\liminf_{n\to\infty}\ell(n)/\log n = 1/\alpha$. Let $\nu$ be a Bernoulli measure with probability
  vector $q=(q_1,\ldots,q_N)$ such that $\sum_iq_i\log p_i+\alpha>0$. Then, $\nu(R(\bo,\ell)^c) =0$ for
  $\bbP$-almost every $\bo$.
\end{theorem}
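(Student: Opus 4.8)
The plan is to combine Tonelli's theorem with the second Borel--Cantelli lemma, dually to the proof of \cref{thm:berR0} (which, in the opposite regime, produced a first-moment convergence argument). Write $\widetilde R(\bo,\ell)=\bigcap_{k\ge 1}\bigcup_{n\ge k}[(\sigma^n\bo)|_{\ell(n)}]\subseteq\Sigma$, so that $R(\bo,\ell)=\pi(\widetilde R(\bo,\ell))$ and hence $\pi^{-1}(R(\bo,\ell)^c)\subseteq\widetilde R(\bo,\ell)^c$; it therefore suffices to show $\nu(\widetilde R(\bo,\ell)^c)=0$ for $\bbP$-almost every $\bo$, where $\nu$ is regarded as the Bernoulli measure with weights $q$ on $\Sigma$. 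Since $(\bo,\bi)\mapsto\mathbb 1\{\bi\notin\widetilde R(\bo,\ell)\}$ is jointly measurable, Tonelli's theorem gives
\[
  \int_\Sigma\nu(\widetilde R(\bo,\ell)^c)\,d\bbP(\bo)=\int_\Sigma\bbP\big(\bi\notin\widetilde R(\bo,\ell)\big)\,d\nu(\bi),
\]
so the claim reduces to showing that $\bbP(\bi\in\widetilde R(\bo,\ell))=1$ for $\nu$-almost every $\bi$.

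For a fixed $\bi$ I would obtain this from independence of suitably spaced visits. Since $\ell(\cdot)$ is monotone, for each value $\ell$ in its (infinite) range the set $\ell^{-1}(\ell)$ is a block of consecutive integers $\{a_\ell,\dots,b_\ell\}$; with $m(\ell)=\lfloor\#\ell^{-1}(\ell)/\ell\rfloor$ as in \cref{eq:famousmn}, the $\ell$-spaced times $k^{(\ell)}_j=a_\ell+j\ell$, $0\le j\le m(\ell)-1$, all lie in $\ell^{-1}(\ell)$ because $m(\ell)\ell\le\#\ell^{-1}(\ell)=b_\ell-a_\ell+1$. Let $A_\ell$ be the event that $(\sigma^{k^{(\ell)}_j}\bo)|_\ell=\bi|_\ell$ for at least one such $j$; if $A_\ell$ occurs then $\bi\in[(\sigma^{k}\bo)|_{\ell(k)}]$ with $\ell(k)=\ell$. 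The event $A_\ell$ depends only on the coordinates $\bo_{a_\ell+1},\dots,\bo_{b_\ell+1}$, and since consecutive blocks abut ($a_{\ell'}=b_\ell+1$ whenever $\ell<\ell'$ are consecutive values in the range) these coordinate blocks are pairwise disjoint, so the family $(A_\ell)_\ell$ is independent under the Bernoulli measure $\bbP$; moreover $\bbP(A_\ell)=1-(1-p_{\bi|_\ell})^{m(\ell)}\ge 1-e^{-p_{\bi|_\ell}m(\ell)}$ because the $m(\ell)$ chosen times are $\ell$ apart.

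It remains to check that $\sum_\ell\bbP(A_\ell)=\infty$ for $\nu$-almost every $\bi$; the second Borel--Cantelli lemma then gives $\bbP(A_\ell\text{ i.o.})=1$, and infinitely many occurrences of the $A_\ell$ (at distinct times, since the blocks are distinct) force $\bi\in\widetilde R(\bo,\ell)$, which together with the displayed identity completes the proof. Write $\gamma=-\sum_iq_i\log p_i$, so the hypothesis $\sum_iq_i\log p_i+\alpha>0$ reads $\gamma<\alpha$; fix $\epsilon\in(0,(\alpha-\gamma)/2)$. By the strong law of large numbers for $\nu$, for $\nu$-a.e. $\bi$ one has $p_{\bi|_\ell}\ge e^{-(\gamma+\epsilon)\ell}$ for all large $\ell$. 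Since $\limsup_{n\to\infty}\frac{\log m(n)}{n}=\alpha$ (noted after \cref{eq:famousmn}), there is a subsequence $\ell_p\to\infty$ in the range of $\ell(\cdot)$ with $m(\ell_p)\ge e^{(\alpha-\epsilon)\ell_p}$; hence $p_{\bi|_{\ell_p}}m(\ell_p)\ge e^{(\alpha-\gamma-2\epsilon)\ell_p}\to\infty$, so $\bbP(A_{\ell_p})\ge 1-e^{-1}$ for all large $p$ and the series diverges.

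The step I expect to be the main obstacle is the combinatorial bookkeeping underlying the exact independence of the events $(A_\ell)_\ell$: inside each block $\ell^{-1}(\ell)$ one must select exactly $m(\ell)$ starting times spaced $\ell$ apart so that the windows $\{k^{(\ell)}_j+1,\dots,k^{(\ell)}_j+\ell\}$ are disjoint within the block and, because consecutive blocks abut, disjoint across blocks as well. Monotonicity of $\ell(\cdot)$ (so that blocks are intervals) together with the inequality $m(\ell)\ell\le\#\ell^{-1}(\ell)$ is exactly what makes this possible, and it is this exact independence that upgrades the first-moment estimates used in \cref{sec:upper} into a genuine second Borel--Cantelli conclusion; the large-deviation input is then routine and entirely parallel to \cref{thm:berR0}.
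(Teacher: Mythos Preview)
Your proof is correct but follows a genuinely different route from the paper's. The paper argues, as in \cref{sec:upperforcomp}, via a \emph{first} Borel--Cantelli/first-moment computation: along the subsequence $(n_p)$ on which $m(n_p)\geq e^{(\alpha-\epsilon)n_p}$ one shows
\[
  \bbE_{\bo}\Bigl(\sum_p\sum_{|\bi|=n_p}q_{\bi}\,\mathbb 1\{\bi\neq(\sigma^k\bo)|_{n_p}\ \forall\,k\in\ell^{-1}(n_p)\}\Bigr)\le\sum_p\sum_{|\bi|=n_p}q_{\bi}(1-p_{\bi})^{m(n_p)}<\infty,
\]
by splitting into $\mathcal{G'}_{n_p}^\epsilon$ and $\mathcal{V'}_{n_p}^\epsilon$ and invoking a large-deviation bound for $\nu$. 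Your approach instead swaps the order of integration via Tonelli and, for $\nu$-a.e.\ fixed $\bi$, uses the \emph{second} Borel--Cantelli lemma on the independent events $(A_\ell)_\ell$; the divergence $\sum_p\bbP(A_{\ell_p})=\infty$ then only needs the SLLN $\tfrac1\ell\log p_{\bi|_\ell}\to-\gamma$ rather than a quantitative large-deviation rate.

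What each buys: the paper's method is uniform with the other covering/expectation arguments in \cref{sec:upper,sec:upperforcomp} and avoids any discussion of independence across blocks. Your method is slightly more elementary on the probabilistic side (SLLN in place of Cram\'er, no $\mathcal{G'}/\mathcal{V'}$ split), at the cost of the careful bookkeeping you flag---verifying that the $\ell$-spaced windows inside each block $\ell^{-1}(\ell)$, and the blocks themselves, read disjoint coordinate ranges of $\bo$. That bookkeeping is indeed the crux and you handle it correctly: $a_\ell+m(\ell)\ell\le b_\ell+1$ and $a_{\ell'}+1=b_\ell+2$ give disjoint coordinate sets, so the $(A_\ell)$ are independent under $\bbP$. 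Note also that the hypothesis $\alpha<\alpha_1$ plays no role in either argument; only $\sum_i q_i\log p_i+\alpha>0$ is used.
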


This result has the immediate corollary

\begin{corollary}\label{thm:spectrumregion}
  Let  $\alpha_0<\alpha<\alpha_1$ and assume that $\ell(n)$ satisfies
  $\liminf_{n\to\infty}\ell(n)/\log n = 1/\alpha$. Then, $\dim_H(R(\bo,\ell))
  =\inf_{q\in[0,1]}P_\alpha(q)$ and also $\cH^{s_0}(R(\bo,\ell)^c)=\cH^{s_0}(\Lambda)$, where $s_0 =
  \dimh \Lambda$ for $\bbP$-almost every $\bo$.
\end{corollary}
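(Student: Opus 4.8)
The strategy mirrors the proof of \cref{thm:notfullregion}: the upper bound comes from \cref{thm:ubmain1}, the matching lower bound from \cref{thm:berRc0}, and the Hausdorff measure statement is then immediate. For the upper bound, \cref{thm:ubmain1} together with \cref{thm:twospec} gives $\dim_H R(\bo,\ell)\le s(\alpha)=\inf_{q\in[0,1]}P_\alpha(q)$ for $\bbP$-almost every $\bo$, and since $\alpha<\alpha_1$ the last sentence of \cref{thm:legendre} makes this value strictly smaller than $s_0$. Consequently $\cH^{s_0}(R(\bo,\ell))=0$; as $R(\bo,\ell)^c=\Lambda\setminus R(\bo,\ell)$ and $\cH^{s_0}(\Lambda)<\infty$ by the OSC, monotonicity and subadditivity of Hausdorff measure force $\cH^{s_0}(R(\bo,\ell)^c)=\cH^{s_0}(\Lambda)$.

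For the lower bound I would proceed as follows. Since $\alpha_0<\alpha<\alpha_1$, we are in the middle case of \cref{thm:legendre}, so $\inf_{q\in[0,1]}P_\alpha(q)=\inf_{q\in\bbR}P_\alpha(q)$, and as the minimiser $q^*$ of $P_\alpha$ over $\bbR$ lies in $(0,1)$ in this range this also equals $\inf_{q>0}P_\alpha(q)$; by \cref{thm:varprinc} this common value is $F(q^*_\alpha):=\bigl(-\sum_i q^*_{i,\alpha}\log q^*_{i,\alpha}\bigr)/\bigl(-\sum_i q^*_{i,\alpha}\log\lambda_i\bigr)$, attained at the probability vector $q^*_\alpha$ with components $q^*_{i,\alpha}=\lambda_i^{P_\alpha(q^*)}(p_ie^{\alpha})^{q^*}$. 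This vector lies in the open simplex and, since $P_\alpha'(q^*)=0$, \cref{eq:Pder} shows $\sum_i q^*_{i,\alpha}\log p_i+\alpha=0$. Because \cref{thm:berRc0} requires the strict inequality $\sum_i q_i\log p_i+\alpha>0$, I would choose a sequence of probability vectors $q^{(n)}\to q^*_\alpha$ with $\sum_i q^{(n)}_i\log p_i+\alpha>0$; such perturbations exist as $q^*_\alpha$ is interior and the $p_i$ are not all equal (otherwise the middle region of \cref{eq:regions} would be empty, contradicting $\alpha_0<\alpha<\alpha_1$), so $q\mapsto\sum_i q_i\log p_i+\alpha$ is a non-constant affine function on the simplex. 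For each fixed $n$, \cref{thm:berRc0} gives that the Bernoulli measure $\nu_n$ with vector $q^{(n)}$ satisfies $\nu_n(R(\bo,\ell)^c)=0$, hence $\nu_n(R(\bo,\ell))=1$, for $\bbP$-almost every $\bo$; since $\dim_H\nu_n=F(q^{(n)})$ under the OSC, the mass distribution principle yields $\dim_H R(\bo,\ell)\ge F(q^{(n)})$. Intersecting these countably many full-measure events and letting $n\to\infty$, continuity of $F$ on the simplex gives $\dim_H R(\bo,\ell)\ge F(q^*_\alpha)=\inf_{q\in[0,1]}P_\alpha(q)$ almost surely, which with the upper bound completes the proof.

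All of the analytic heavy lifting is already contained in \cref{thm:ubmain1,thm:berRc0} (Borel--Cantelli and the construction of suitable Bernoulli measures on $R(\bo,\ell)^c$), so I expect the proof of this corollary to be short. The one point that does need care is the passage from the closed constraint $\sum_i q_i\log p_i+\alpha\ge0$, on whose boundary the supremum in \cref{thm:varprinc} is attained, to the strict constraint demanded by \cref{thm:berRc0}; this is precisely what the perturbation argument above resolves, using that the optimiser is interior to the simplex and that the constraint function is non-constant there. I do not anticipate any further difficulty.
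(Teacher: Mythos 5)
Your proposal is correct and follows essentially the same route as the paper: the upper bound comes from \cref{thm:ubmain1} together with \cref{thm:twospec}, the statement $\cH^{s_0}(R(\bo,\ell)^c)=\cH^{s_0}(\Lambda)$ follows immediately from $\inf_{q\in[0,1]}P_\alpha(q)<s_0$, and the lower bound is obtained by applying \cref{thm:berRc0} to Bernoulli measures realising the variational principle of \cref{thm:varprinc}. The one place where you genuinely diverge is your perturbation step, and it is an improvement rather than a detour. The paper's proof takes the optimising vector of the variational principle and asserts that it satisfies $\sum_i q_i\log p_i+\alpha>0$; but in the region $\alpha_0<\alpha<\alpha_1$ the optimiser $q^*_{i,\alpha}=\lambda_i^{P_\alpha(q^*)}(p_ie^{\alpha})^{q^*}$ corresponds to $P_\alpha'(q^*)=0$, so \cref{eq:Pder} forces $\sum_i q^*_{i,\alpha}\log p_i+\alpha=0$ exactly, and the hypothesis of \cref{thm:berRc0} (whose proof needs room to choose $\epsilon$ with $\sum_i q_i\log p_i+\alpha>2\log(1+\epsilon)>0$) fails at the optimiser itself. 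Your replacement of the optimiser by interior probability vectors $q^{(n)}\to q^*_\alpha$ satisfying the strict inequality --- which exist precisely because the constraint is a non-constant affine function on the simplex when the $p_i$ are not all equal, as is guaranteed by $\alpha_0<\alpha_1$ --- followed by a countable intersection of full-measure events and continuity of the entropy-over-Lyapunov ratio, is exactly what is needed to make the limiting argument rigorous. The rest of your argument (using that $\nu_n(R(\bo,\ell))=1$ and that the Bernoulli measure has exact dimension $F(q^{(n)})$ under the OSC) matches the paper's intent, and I see no gap.
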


\begin{proof}
  By \cref{sec:upper} and \cref{thm:twospec}, $\dim_H(R(\bo,\ell))
  \leq\inf_{q\in[0,1]}P_\alpha(q)<s_0$ and so, the claim for the Hausdorff measure of
  $R(\bo,\ell)^c$ follows.

  Let $q=(q_1,\ldots,q_N)$ be the probability vector, where the supremum in \cref{eq:varprinc2} is
  attained. Then by \cref{thm:varprinc}, $\sum_iq_i\log p_i+\alpha>0$. Thus, by \cref{thm:berRc0},
  $$
  \dim_H(R(\bo,\ell))\geq\sup\left\{\frac{-\sum_{i=1}^Nq_i\log q_i}{-\sum_{i=1}^Nq_i\log\lambda_i} :
  \sum_{i=1}^Nq_i\log p_i+\alpha>0\right\}=\inf_{q>0}P_\alpha(q),
  $$
  where in the last equality, we have used again \cref{thm:varprinc}.
\end{proof}

\begin{proof}[Proof of \cref{thm:berRc0}]
  The proof will be similar to \cref{sec:upperforcomp}.  Let $\epsilon>0$ and let $(n_p)$ be the
  subsequence for which
  $$
  m(n_p)e^{-\alpha n_p}\geq(1+\epsilon)^{-n_p}.
  $$
  By Borel-Cantelli's lemma, it is enough to show that for $\bbP$-almost every $\bo$
  $$
  \sum_{p=1}^\infty\sum_{|\bi|=n_p}q_{\bi}\cdot\mathbb 1\{\bi\neq\sigma^k(\bo)|_{n_p} \text{ for
  every $k$ for which }n_p=\ell(k)\}<\infty.
  $$
  To show this for $\bbP_p$-almost every $\bo$, it is enough to show that
  \begin{align}
    \bbE\left(\sum_{p=1}^\infty\sum_{|\bi|=n_p}q_{\bi}\cdot\mathbb 1\{\bi\neq\sigma^k(\bo)|_{n_p}
	\text{ for every $k$ for which
    }n_p=\ell(k)\}\right)\nonumber
    \\
    \leq\sum_{p=1}^\infty\sum_{|\bi|=n_p}q_{\bi}\cdot(1-p_{\bi})^{m(n_p)}<\infty.\label{eq:conv}
  \end{align}
  Choose $\epsilon>0$ such that $\sum_iq_i\log p_i+\alpha>2\log(1+\epsilon)>0$, and by recall the
  definition of $\mathcal{G'}_n^\epsilon$ and $\mathcal{V'}_n^\epsilon$ from \cref{eq:modifGV}, we
  get
  \[\begin{split}
    \sum_{|\bi|=n_p}q_{\bi}\cdot(1-p_{\bi})^{m(n_p)}
    &\leq\sum_{\bi\in\mathcal{G'}_{n_p}^\epsilon}q_{\bi}e^{-p_{\bi}m(n_p)}+\sum_{\bi\in\mathcal{V'}_{n_p}^\epsilon}q_{\bi}\\
    &\leq e^{-(1+\epsilon)^{n_p}}+\nu\left(\left\{\bi\in\Sigma: \alpha+\tfrac1n_p\log
    p_{\bi}<2\log(1+\epsilon)\right\}\right)\\
    &\leq e^{-(1+\epsilon)^{n_p}}+e^{-n_p\delta}
  \end{split}\]
  where the last inequality follows by the large deviation principle. Then \cref{eq:conv} clearly follows.
\end{proof}

\section{Lower bound for \texorpdfstring{$\alpha\le\alpha_0$}{small a}}
\label{sec:lowersmall}

At its heart, the strategy of proof of the lower bound from \cref{thm:main} in this case is standard: We
construct a Cantor subset to $R(\bo, \ell)$ in \cref{sec:cantor}, and then a measure on this
Cantor set in \cref{sec:randommeasure}. We prove well-distribution properties of the measure in
\cref{subsec:measure}, and show that $\dim_HR(\bo,\ell)\geq s(\alpha)$ via an energy estimate in
\cref{subsec:energy}. By far the most difficult and technical part of this proof is the definition of the
measure and handling its intricacies in order to compute its energy.

Let $\alpha_0\ge \alpha>0$ and $\ell\colon\bbN\to\bbN$ be an increasing function such that
$$
\liminf_{n\to\infty}\frac{\ell(n)}{\log n}=\frac1\alpha.
$$
In this section, our standing assumption is that
$\alpha\leq-\sum_{i=1}^N\lambda_i^{P_\alpha(1)}p_ie^{\alpha}\log p_i$. Let $\gamma<\alpha$ be
arbitrary. Then $\gamma<-\sum_{i=1}^N\lambda_i^{P_\gamma(1)}p_ie^{\gamma}\log p_i$ holds.

For short, we use the notation $s(\gamma)=P_\gamma(1)$ (recall \cref{thm:legendre} and
\cref{thm:twospec}). So, let us fix $\gamma\in\bbR$ such that
$\gamma<-\sum_{i=1}^N\lambda_i^{s(\gamma)}p_ie^{\gamma}\log p_i$. Then there exists
$\epsilon_\gamma>0$ such that $\gamma+\sum_{i=1}^N\lambda_i^{s(\gamma)}p_ie^{\gamma}\log
p_i<\log(1-\epsilon_\gamma)$ holds. Then by Cram\'er's bound, there exists $\beta=\beta_\gamma>0$
such that for every $n\in\bbN$
\begin{align}
  \bbQ_1\left(\left\{\bj\in\Sigma_{n}: p_{\bj}e^{\gamma |\bj|} > (1-\epsilon_\gamma)^{|\bj|}
  \right\}\right)\leq e^{-\beta n},\label{eq:Cramer}
\end{align}
where we recall that $\bbQ_1$ is the Bernoulli measure induced by the probabilities
$(\lambda_i^{s(\gamma)}p_ie^{\gamma})_{i=1}^N$.

\subsection{Cantor subset for the lower bound}\label{sec:cantor}

Denote, for each $n\in \bbN$, $M_n:=\min\ell^{-1}(n)$. Let $q$ be the maximal integer such that
$q\leq 2e^{\gamma n}/n$ and $M_n+qn\in \ell^{-1}(n)$. Let
\[
  P(n)=\{M_n, M_n+n, \dots, M_n +(q-1)n\}.
\]
Let $\{n_p\}$ be a sequence such that $\limsup_{n\to\infty}\frac{\log
m(n)}{n}=\lim_{p\to\infty}\frac{\log m(n_p)}{n_p}=\alpha>\gamma$. Without loss of generality, we may assume that
\begin{equation}\label{eq:mi}
  \frac{e^{\gamma n_p}}{2n_p}\leq\# P(n_p)\leq\frac{2e^{\gamma n_p}}{n_p}\quad\text{ for every }p\in\bbN.
\end{equation}

Let us now define a class of functions $n: \Sigma_*\to \bbN$, which plays a central role in our construction.

\begin{definition}\label{def:constructor}
  We call a function $n: \Sigma_*\to \bbN$ a \emph{constructor function} if it satisfies the
  following properties: For every $\bi, \bj\in \Sigma_*$
  \begin{enumerate}[({C}1)]
    \item
      $n(\bi)\in\{n_p\}$,
    \item if $\bi\neq \bj$, then $n(\bi)\neq n(\bj)$,
    \item\label{it:condni3} if $|\bj|>|\bi|$, then $n(\bj) > n(\bi)$
    \item\label{it:condni4} $\frac{n(\bi)}{p_\bi e^{\gamma|\bi|}}>2$,
    \item\label{it:condni5} $\frac{|\bi|}{n(\bi)}\leq\frac{1}{n(\emptyset)}\leq\frac12$.
  \end{enumerate}
\end{definition}
It is easy to construct constructor functions. Arrange the elements of $\Sigma_*$ such that
their length are increasing and the same length words are in alphabetic order. Let this be
$(\emptyset,1,\ldots,m,11,\ldots)=(\bi_0,\bi_1,\ldots)$ Fix $2\leq n(\emptyset)\in\{n_p\}$ being
arbitrary and then define $n(\bi_n)$ by induction such that let
$n(\bi_n)\in\{n_p\}\cap(n(\bi_{n-1}),\infty)$ be the smallest element such that
\cref{it:condni4} and \cref{it:condni5} are satisfied.

Denote, for $\bi\in \Sigma_*$
\[
  m(\bi):=\# P(n(\bi)).
\]
Set inductively, for $\bi\in\Sigma_*, n\in \bbN$,
\[
  \Xi_0^{\bi}=\{\bi\} \quad\text{and}\quad
  \Xi_n^\bi=\{\bj\in \Sigma_*\mid n(\bi')=|\bj|\text{ and
  }\bj|_{|\bi'|}=\bi'\text{ for some }\bi'\in \Xi_{n-1}^\bi\},
\]
the set of children of $\bi$ at the $n$th construction level. Observe that
$[\bi]=\bigcup\{[\bj]:\bj\in\Xi_n^\bi\}$ for every $n\geq0$, hence $\{\Xi_n^\bi\}_{n\in\bbN}$ form a
very sparse subtree of $\Sigma_*$, which still covers the whole set $\Sigma$. Now, we will choose
the random subset which are actually visited by the random cover process.

Let $\cC_1$ be the collection of finite words, which have been visited by the ``children of the empty
set''. That is,
$$
\cC_1:=\{(\sigma^p\bo)|_{n(\emptyset)} :p\in P(n(\emptyset))\}\subseteq\Xi_1^\emptyset
$$
Now, let us define the $n$th generation by induction, namely,
$$
\cC_n:=\bigcup_{\bi\in\cC_{n-1}}\{(\sigma^p\bo)|_{n(\bi)} :p\in P(n(\bi))\text{ }\&\text{
}\bi<\sigma^p\bo\}\subseteq\Xi_n^\emptyset.
$$
Observe that
$$
\{(\sigma^p\bo)|_{n(\bi)} :p\in P(n(\bi))\text{ }\&\text{ }\bi<\sigma^p\bo\}\subseteq\Xi_1^\bi.
$$
Then the random Cantor subset $C$ of $R(\bo,\ell)$ we consider is defined as intersection of the
nested sequence of compact sets $C_n\subset\Sigma$, where
\[
  C_{n}=\bigcup _{\bi\in\cC_n}[\bi]\qquad\text{and}\qquad C=\bigcap_{n=1}^\infty C_n.
\]
Let us observe that the random Cantor subset $C$ depends on the constructor function defined in
\cref{def:constructor}. Clearly, $C=\emptyset$ with positive probability. In particular, we will
construct a random measure $\mu=\mu_{\bo}$ for $\bbP$-almost every $\bo$ such that
$0<\mu(C)<\infty$ with positive probability. Our goal is to show that for every $\epsilon>0$ there
is a properly chosen constructor function such that the $(s(\gamma)-\epsilon)$-energy of $\mu$ is
finite for every $\epsilon>0$ almost surely. In the remaining part of the section, we suppress the
notation $\mu_\bo$, but the reader shall keep in mind that the measure actually depends on
$\bo$.

\subsection{Construction of a random measure on the Cantor set}\label{sec:randommeasure}

For every $\bi\in \Sigma_*$ we will define an $L^2$-martingale that will be used to determine the
measure of $[\bi]$. First we will need some notation. Let $s_\bi$ be such that
\[
  \sum_{|\bj|=n(\bi),\ \bi<\bj}\lambda^{s_\bi}_{\sigma^{|\bi|}(\bj)}(1-(1-p_\bj)^{m(\bi)})=1.
\]
Then recall the definition of $P(n)$ from \cref{sec:cantor} and let $\mathbb 1_{\bj}^\bi$ denote the
characteristic function of the set
\[
  \{\bo\in \Sigma\mid \exists p\in P(n(\bi))\text{ s.t. }(\sigma^p\bo)|_{\ell(p)}=\bj\text{ and }\bi<\bj\}.
\]
Note that since $p\in P(n(\bi))$, $\ell(p)=n(\bi)$.

We set $X_\bi^{(0)}=1$ for every $\bi\in\Sigma_*$. Then, for $n\in \bbN$, we define
$X_{\bi}^{(n+1)}$ inductively for
every $\bi\in\Sigma_*$ by
\[
  X_{\bi}^{(n+1)}=\sum_{|\bj|=n(\bi)}\lambda^{s_\bi}_{\sigma^{|\bi|}(\bj)}\cdot\mathbb 1^\bi_\bj\cdot X_\bj^{(n)}.
\]
In particular,
\[
  X_{\bi}^{(1)}=\sum_{|\bj|=n(\bi)}\lambda^{s_{\bi}}_{\sigma^{|\bi|}(\bj)}\cdot\mathbb 1^\bi_\bj.
\]
Note that by choice of $s_{\bi}$, we obtain $\bbE ( X_\bi^{(1)})=1$.

Let us now set up a filtration of sigma-algebras. Let
\[
  a_n^\bi=\max\{\max \ell^{-1}(n(\bj))+n(\bj)\mid \bj \in \Xi^\bi_{n-1}\}, \quad\text{ with }a_0^\bi=0.
\]
Then let $\cF_n^\bi$ be the sigma-algebra generated by the cylinders of length at most $a_n^\bi$ on
$\Sigma$, with $\cF_0^\bi$ the trivial sigma-algebra.

\begin{proposition}
  For each $\bi\in \Sigma_*$ the collection $(\cF_n^\bi)$ is a filtration, \ie $\cF_{n}^\bi\supset
  \cF_{n-1}^\bi$. Further,
  \[
    \cF_n^\bi = \bigcup_{|\bj|=n(\bi), \bi<\bj}\cF_{n-1}^\bj.
  \]
\end{proposition}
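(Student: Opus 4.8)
The plan is to reduce both assertions to elementary statements about the integers $a_n^\bi$. Indeed $\cF_n^\bi$ is just the sigma-algebra of events depending on the first $a_n^\bi$ coordinates of $\Sigma$, and for such cylinder sigma-algebras one has $\cF^a\subseteq\cF^b$ if and only if $a\le b$. Hence the filtration property $\cF_{n-1}^\bi\subseteq\cF_n^\bi$ is equivalent to $a_{n-1}^\bi\le a_n^\bi$; and, noting that $\{\cF_{n-1}^\bj:|\bj|=n(\bi),\ \bi<\bj\}$ is a \emph{finite} family of cylinder sigma-algebras, hence totally ordered by inclusion, its union equals its largest member, so the second identity is equivalent to $a_n^\bi=\max\{a_{n-1}^\bj:|\bj|=n(\bi),\ \bi<\bj\}$.

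For the second identity I would first record the basic recursion of the trees $\Xi_\bullet^\bi$. From $\Xi_0^\bi=\{\bi\}$ one reads off $\Xi_1^\bi=\{\bj\in\Sigma_*:|\bj|=n(\bi),\ \bi<\bj\}$, and then an easy induction on $m\ge1$ gives
\[
  \Xi_m^\bi=\bigcup_{\bj\in\Xi_1^\bi}\Xi_{m-1}^\bj,
\]
the base case being $\bigcup_{\bj\in\Xi_1^\bi}\{\bj\}=\Xi_1^\bi$ and the step merely unrolling the defining relation $\Xi_{m+1}^\bi=\{\bk:|\bk|=n(\bk'),\ \bk'<\bk\text{ for some }\bk'\in\Xi_m^\bi\}$ through the induction hypothesis. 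Plugging this (with $m=n-1$) into the definition of $a_n^\bi$ yields, for every $n\ge2$,
\[
  a_n^\bi=\max_{\bj\in\Xi_1^\bi}\ \max_{\bk\in\Xi_{n-2}^\bj}\bigl(\max\ell^{-1}(n(\bk))+n(\bk)\bigr)=\max_{\bj\in\Xi_1^\bi}a_{n-1}^\bj,
\]
which is exactly the claimed identity (for $n=1$ the right-hand side is degenerate and $\cF_1^\bi$ is simply the sigma-algebra on the first $\max\ell^{-1}(n(\bi))+n(\bi)$ coordinates, so only the monotonicity below is at issue).

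For the monotonicity $a_{n-1}^\bi\le a_n^\bi$ I would argue as follows. Every $\bk\in\Xi_{n-1}^\bi$ is an extension, to length $n(\bk')$, of some $\bk'\in\Xi_{n-2}^\bi$; by (C5) when $|\bk'|\ge1$ (and since $n(\emptyset)\ge2$ when $\bk'=\emptyset$) we have $n(\bk')>|\bk'|$, hence $|\bk|=n(\bk')>|\bk'|$, and so (C3) gives $n(\bk)>n(\bk')$. Because $\ell$ is monotone increasing, $m<m'$ in the range of $\ell$ forces $\max\ell^{-1}(m)<\min\ell^{-1}(m')\le\max\ell^{-1}(m')$, so $m\mapsto\max\ell^{-1}(m)+m$ is strictly increasing; applied to $m=n(\bk')<n(\bk)=m'$ this gives $\max\ell^{-1}(n(\bk'))+n(\bk')<\max\ell^{-1}(n(\bk))+n(\bk)$. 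Taking a maximiser $\bk^\ast\in\Xi_{n-2}^\bi$ for $a_{n-1}^\bi$ and any child $\bk\in\Xi_{n-1}^\bi$ of $\bk^\ast$ (the set $\Xi_1^{\bk^\ast}$ is non-empty, e.g. it contains $\bk^\ast$ padded with $1$'s), we get $a_n^\bi\ge\max\ell^{-1}(n(\bk))+n(\bk)>a_{n-1}^\bi$. For $n=1$ one has directly $a_1^\bi=\max\ell^{-1}(n(\bi))+n(\bi)\ge n(\bi)\ge2>0=a_0^\bi$ while $\cF_0^\bi$ is trivial; thus $\cF_{n-1}^\bi\subseteq\cF_n^\bi$ for every $n\ge1$.

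The proof is essentially bookkeeping and I do not expect a serious obstacle; the one point requiring care is the interplay of the constructor-function axioms. It is precisely (C3) together with (C5) (and $n(\emptyset)\ge2$) that guarantees that passing to children strictly increases $n(\cdot)$, and this, combined with the monotonicity of $m\mapsto\max\ell^{-1}(m)+m$ inherited from the monotonicity of $\ell$, is what pushes $a_{n-1}^\bi$ strictly below $a_n^\bi$ rather than merely weakly. One should also keep in mind the harmless implicit standing assumption that every value $n_p$ actually used satisfies $\ell^{-1}(n_p)\neq\emptyset$ — automatic since $m(n_p)\to\infty$ — so that all the quantities $\max\ell^{-1}(n(\bk))$ appearing above are well defined.
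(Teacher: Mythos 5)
Your proof is correct and follows essentially the same route as the paper: both claims are reduced to the identity $a_n^\bi=\max\{a_{n-1}^\bj : |\bj|=n(\bi),\ \bi<\bj\}$, obtained from the induction $\Xi_m^\bi=\bigcup_{\bj\in\Xi_1^\bi}\Xi_{m-1}^\bj$ and the observation that the $\cF$'s form a chain of cylinder sigma-algebras. You additionally supply the verification of $a_{n-1}^\bi<a_n^\bi$ via (C3), (C5) and the monotonicity of $\ell$ (which the paper asserts without proof), and you correctly flag the degenerate $n=1$ case of the union identity, where the right-hand side collapses to the trivial sigma-algebra.
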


\begin{proof}
  The first claim follows immediately from $a_{n}^\bi> a_{n-1}^\bi$.

  For the second claim, we prove by induction that
  \[
    \Xi_{n}^\bi=\bigcup_{|\bj|=n(\bi), \bi<\bj}\Xi_{n-1}^\bj.
  \]
  The case $n=1$ is immediate from definitions. Assume the claim holds up to $n$. Then
  \begin{align*}
    \Xi_{n+1}^\bi
    & = \{\bj\mid |\bj|=n(\bi'), \bi'<\bj \text{ for some }\bi'\in \Xi_{n}^\bi\}\\
    &=\bigcup_{|\bj|=n(\bi), \bi<\bj}\{\bj'\mid n(\bi')=|\bj'|, \bi'<\bj'\text{ for
    some }\bi'\in \Xi^{\bj}_n\}\\
    &=\bigcup_{|\bj|=n(\bi), \bi<\bj}\Xi^{\bj}_n,
  \end{align*}
  by the induction hypothesis and the definition of $\Xi_n^\bj$. Hence,
  \[
    a^\bi_n=\max \{a_{n-1}^\bj\mid |\bj|=n(\bi), \bi < \bj\},
  \]
  which proves the claim.
\end{proof}

\begin{proposition}\label{thm:uniforms}
  For every $\epsilon_0>0$ there exists a $K\geq1$ such that for every constructor function with $n(\emptyset)\geq K$
  \[
    0 \leq s(\gamma)-s_{\bi}<\epsilon_0 \quad\text{ for all } \bi \in \Sigma_*.
  \]
  Moreover, there exists a $K\geq1$ such that for every constructor function with
  $n(\emptyset)\geq K$ we get that for all $\bi\in \Sigma_*$
  \begin{equation}
    \left( \frac{n(\bi)}{p_\bi e^{\gamma|\bi|}}\right)^{\frac{1}{n(\bi) - |\bi|}}\le \sum_{j=1}^m
    \lambda_j^{s_\bi}p_j e^{\gamma}\le  \left( \frac{5 n(\bi)}{p_\bi
    e^{\gamma|\bi|}}\right)^{\frac{1}{n(\bi) - |\bi|}}.
    \label{eq:goodSumBound}
  \end{equation}
\end{proposition}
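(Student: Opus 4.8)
The plan is to reduce the statement to a one-word estimate. Fix a constructor function and a word $\bi\in\Sigma_*$, and write $n=n(\bi)$, $m=m(\bi)$, $L=n-|\bi|$; by \cref{it:condni5} we have $L\ge n(\emptyset)-1$, so $L$ is uniformly large once $n(\emptyset)$ is large. Writing $\bj=\bi\bk$ with $|\bk|=L$ in the defining relation for $s_\bi$, we see $s_\bi$ is the unique root of
\[
  \Psi(s):=\sum_{|\bk|=L}\lambda_\bk^{s}\bigl(1-(1-p_\bi p_\bk)^{m}\bigr)=1 ,
\]
$\Psi$ being continuous, strictly decreasing, tending to $+\infty$ as $s\to-\infty$ and to $0$ as $s\to+\infty$. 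The heuristic is that $1-(1-x)^m\approx mx$ when $mx$ is small, and the words $\bk$ carrying the mass of the sum satisfy exactly this, so $s_\bi$ should be close to the root $s^{*}$ of $\Phi(s):=mp_\bi\bigl(\sum_{j=1}^N\lambda_j^{s}p_j\bigr)^{L}=1$; note $\sum_j\lambda_j^{s^*}p_je^\gamma=\bigl(e^{\gamma L}/(mp_\bi)\bigr)^{1/L}$, and by \cref{eq:mi} the quantity $e^{\gamma L}/(mp_\bi)$ is comparable to $n(\bi)/(p_\bi e^{\gamma|\bi|})$, which is exactly the quantity in \cref{eq:goodSumBound}.

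First I would establish the sandwich $\Psi\asymp\Phi$. The inequality $\Psi(s)\le\Phi(s)$ is immediate from $1-(1-x)^m\le mx$ (\cref{eq:repeat}) and $\sum_{|\bk|=L}\lambda_\bk^{s}p_\bk=(\sum_j\lambda_j^{s}p_j)^{L}$. For the reverse inequality, restrict the sum defining $\Psi$ to the ``good'' words $\bk$ with $mp_\bi p_\bk\le(1-\epsilon_\gamma)^{L}$, on which \cref{eq:taylor} gives $1-(1-p_\bi p_\bk)^{m}\ge mp_\bi p_\bk\bigl(1-\tfrac12(1-\epsilon_\gamma)^{L}\bigr)$. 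Each complementary ``bad'' word has $p_\bk e^{\gamma L}>(1-\epsilon_\gamma)^{L}$, since $mp_\bi e^{-\gamma L}<1$ by \cref{it:condni4} together with \cref{eq:mi}; because $\{\lambda_\bk^{s(\gamma)}p_\bk e^{\gamma L}\}_{|\bk|=L}$ is precisely the distribution of $\bbQ_1$ on $\Sigma_L$, Cram\'er's bound \cref{eq:Cramer} gives $\sum_{\bk\ \text{bad}}\lambda_\bk^{s(\gamma)}p_\bk\le e^{-\gamma L}e^{-\beta L}$. Using $\lambda_\bk^{s}\le\lambda_{\min}^{-(s(\gamma)-s)L}\lambda_\bk^{s(\gamma)}$ for $s\le s(\gamma)$ transfers this into $\sum_{\bk\ \text{bad}}\lambda_\bk^{s}p_\bk\le e^{-\beta' L}\,(\sum_j\lambda_j^{s}p_j)^{L}$ with some $\beta'>0$, provided $s$ stays in the fixed interval $I:=\bigl(s(\gamma)-\tfrac{\beta}{2(-\log\lambda_{\min})},\,s(\gamma)\bigr]$ (here one uses $\sum_j\lambda_j^{s}p_je^\gamma\ge1$ for $s\le s(\gamma)$). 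Combining, for all $s\in I$, $(1-\eta_L)\Phi(s)\le\Psi(s)\le\Phi(s)$ with $\eta_L\to0$ uniformly in $\bi$ as $n(\emptyset)\to\infty$.

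Next I would locate $s^{*}$ and then $s_\bi$. From \cref{it:condni4}, \cref{it:condni5} and \cref{eq:mi} one gets $1<e^{\gamma L}/(mp_\bi)\le 2n(\bi)\,(p_{\min}^{-1}e^{|\gamma|})^{|\bi|}$, and since $|\bi|/L\le 1/(n(\emptyset)-1)$ and $L\ge n(\bi)/2$, raising to the power $1/L$ yields $\sum_j\lambda_j^{s^*}p_je^\gamma=\bigl(e^{\gamma L}/(mp_\bi)\bigr)^{1/L}\to1$ uniformly in $\bi$ as $n(\emptyset)\to\infty$; as $s\mapsto\sum_j\lambda_j^{s}p_je^\gamma$ is strictly decreasing with derivative bounded away from $0$ and equals $1$ at $s(\gamma)$, this forces $s^{*}<s(\gamma)$ and $s^{*}\to s(\gamma)$ uniformly, so $s^{*}\in I$ for $n(\emptyset)$ large. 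Monotonicity of $\Psi$ with $\Psi(s^{*})\le\Phi(s^{*})=1=\Psi(s_\bi)$ gives $s_\bi\le s^{*}<s(\gamma)$; conversely, by Bernoulli's inequality $\Phi(s^{*}-1/L)\ge\bigl(1+\tfrac1L(-\log\lambda_{\max})\bigr)^{L}\Phi(s^{*})\ge1-\log\lambda_{\max}>1$, so $\Psi(s^{*}-1/L)\ge(1-\eta_L)\Phi(s^{*}-1/L)>1$ once $\eta_L$ is small, whence $s_\bi>s^{*}-1/L$. Thus $s_\bi\to s(\gamma)$ uniformly, which with $s_\bi<s(\gamma)$ and the monotonicity of the claim in $\epsilon_0$ gives the first assertion. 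Finally, with $A:=\sum_{j=1}^N\lambda_j^{s_\bi}p_je^{\gamma}$ one has $\Phi(s_\bi)=mp_\bi A^{L}e^{-\gamma L}$, and since $s_\bi\in I$ the sandwich reads $e^{\gamma L}/(mp_\bi)\le A^{L}\le e^{\gamma L}/\bigl((1-\eta_L)mp_\bi\bigr)$; inserting the bounds on $m=m(\bi)$ from \cref{eq:mi} and $L=n(\bi)-|\bi|$, then taking $\eta_L$ small, yields \cref{eq:goodSumBound}.

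The main obstacle is the uniform control of the ``bad'' words by a single $K$. The only estimate tailored to the tail of $p_\bk$ is Cram\'er's bound \cref{eq:Cramer}, which is stated for $\bbQ_1$, i.e.\ for the $\lambda_\bk^{s(\gamma)}$-weights; passing from $\lambda_\bk^{s}$- to $\lambda_\bk^{s(\gamma)}$-weights costs a factor $\lambda_{\min}^{-(s(\gamma)-s)L}$, which is beaten by the fixed Cram\'er gain $e^{-\beta L}$ only once $s$ is a priori known to lie within a fixed distance of $s(\gamma)$. This apparent circularity is broken by first pinning $s_\bi$ to $(s^{*}-1/L,\,s^{*}]$ and separately proving $s^{*}\to s(\gamma)$, all estimates depending on $\bi$ solely through $L\ge n(\emptyset)-1$ and the ratio $p_\bi e^{\gamma|\bi|}/n(\bi)$ that \cref{it:condni4} and \cref{it:condni5} keep uniformly bounded.
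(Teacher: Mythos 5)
Your argument is correct, and it is built from exactly the same ingredients as the paper's proof: the linearisations \cref{eq:repeat} and \cref{eq:taylor}, the good/bad decomposition of $\Sigma_{n(\bi)-|\bi|}$ governed by $\epsilon_\gamma$, Cram\'er's bound \cref{eq:Cramer} for $\bbQ_1$, and the conversion between $\lambda_{\bk}^{s}$- and $\lambda_{\bk}^{s(\gamma)}$-weights at the cost of a factor $\lambda^{\pm(s(\gamma)-s)L}$, with \cref{it:condni4}--\cref{it:condni5} supplying the uniformity in $\bi$. The difference is organisational: the paper works directly with the defining equation for $s_\bi$, obtaining the lower half of \cref{eq:goodSumBound} and $s_\bi\le s(\gamma)$ in one line from \cref{eq:repeat}, then proving closeness by taking logarithms of a one-sided inequality (where the exponent conversion $\lambda_{\max}^{L(s_\bi-s(\gamma))}$ goes in the harmless direction, so no a priori localisation of $s_\bi$ is needed), and only afterwards deriving the upper half of \cref{eq:goodSumBound} using the already-established closeness to absorb $\lambda_{\min}^{(s_\bi-s(\gamma))L}e^{-\beta L}$. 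You instead set up a two-sided sandwich $\;(1-\eta_L)\Phi\le\Psi\le\Phi\;$ valid on a fixed interval $I$ and locate $s_\bi$ by perturbing the root $s^*$ of the comparison pressure $\Phi$; this packages all three assertions into one comparison and makes the "no circularity" point explicit, at the price of having to verify separately that $s^*$ and $s^*-1/L$ lie in $I$. Both resolutions of the localisation issue are sound. One cosmetic remark: your route, like the paper's own computation, actually yields the left-hand side of \cref{eq:goodSumBound} with $n(\bi)/(2p_\bi e^{\gamma|\bi|})$ rather than $n(\bi)/(p_\bi e^{\gamma|\bi|})$; the constant in the stated inequality appears to be a typo in the proposition, and the extra factor $2$ is harmless since \cref{it:condni4} still forces the bound to exceed $1$.
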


\begin{proof}
  Let $\bi\in \Sigma_*$. By the definition of $s_\bi$
  and using the approximations \eqref{eq:mi} and \eqref{eq:repeat}, we obtain
  \[
    1=\sum_{|\ba|=n(\bi) - |\bi|}\lambda_{\ba}^{s_\bi}(1-(1-p_\bi p_{\ba})^{m(\bi)})\le
    \frac{2p_\bi e^{\gamma|\bi|}}{n(\bi)}\left ( \sum_{i=1}^m \lambda_i^{s_\bi}p_ie^{\gamma}\right)^{n(\bi) - |\bi|},
  \]
  which rearranges to
  \[
    \left( \frac{n(\bi)}{2p_\bi e^{\gamma|\bi|}}\right)^{\frac{1}{n(\bi) - |\bi|}} \le
    \sum_{i=1}^m \lambda_i^{s_\bi}p_i e^{\gamma}.
  \]
  By \cref{it:condni4},
  \[
    \left( \frac{n(\bi)}{2p_\bi e^{\gamma|\bi|}}\right)^{\frac{1}{n(\bi) - |\bi|}}> 1
  \]
  so that $s(\gamma) \geq s_{\bi}$.

  Let $\epsilon_\gamma$ be as in \cref{eq:Cramer}. Then denote
  \[
    \cG_\bi^{\epsilon_\gamma} :=\{\bj \in \Sigma_{n(\bi) - |\bi|}\mid p_\bj e^{\gamma |\bj|}\le
    (1-\epsilon_\gamma)^{n(\bi) - |\bi|}\}.
  \]
  We estimate,
  \begin{align}
    1&=\sum_{|\bj|=n(\bi)-|\bi|}\lambda^{s_\bi}_{\bj}(1-(1-p_{\bi}p_\bj)^{m(\bi)})\nonumber\\
     &\geq\lambda_{\max}^{(n(\bi)-|\bi|)(s_{\bi}-s(\gamma))}\sum_{\bj\in\cG_{\bi}^{\epsilon_\gamma}}\lambda_{\bj}^{s(\gamma)}(1-(1-p_{\bi}p_\bj)^{m(\bi)})\label{eq:loweroneminusboundstart}
     \\
     &\geq\lambda_{\max}^{(n(\bi)-|\bi|)(s_{\bi}-s(\gamma))}\sum_{\bj\in\cG_{\bi}^{\epsilon_\gamma}}\lambda_{\bj}^{s(\gamma)}m(\bi)p_{\bi}p_{\bj}\left(1-\frac{m(\bi)p_{\bi}p_{\bj}}{2}\right)\nonumber\\
     \intertext{by \cref{eq:taylor}, and}
     &\geq\lambda_{\max}^{(n(\bi)-|\bi|)(s_{\bi}-s(\gamma))}\sum_{\bj\in\cG_\bi^{\epsilon_\gamma}}\lambda_{\bj}^{s(\gamma)}\frac{e^{\gamma
       n(\bi)}}{2n(\bi)}p_{\bi}p_{\bj}\left(1-\frac{e^{\gamma n(\bi)}p_{\bi}p_{\bj}}{
       n(\bi)}\right)\nonumber\\
       \intertext{by the definition of $\cG^{\epsilon_\gamma}_{\bi}$. So,}
     &\geq\lambda_{\max}^{(n(\bi)-|\bi|)(s_{\bi}-s(\gamma))}\sum_{\bj\in\cG_\bi^{\epsilon_\gamma}}\lambda_{\bj}^{s(\gamma)}\frac{e^{\gamma
     n(\bi)}}{2n(\bi)}p_{\bi}p_{\bj}\left(1-\frac{e^{\gamma
       |\bi|}p_{\bi}(1-\epsilon_\gamma)^{n(\bi)-|\bi|}}{
   n(\bi)}\right)\nonumber
   \intertext{and by \cref{it:condni4},}
     &\geq\lambda_{\max}^{(n(\bi)-|\bi|)(s_{\bi}-s(\gamma))}\sum_{\bj\in\cG_\bi^{\epsilon_\gamma}}\lambda_{\bj}^{s(\gamma)}\frac{e^{\gamma
     n(\bi)}}{4n(\bi)}p_{\bi}p_{\bj}\nonumber\\
     &\geq\lambda_{\max}^{(n(\bi)-|\bi|)(s_{\bi}-s(\gamma))}\frac{e^{\gamma|\bi|}p_{\bi}}{4n(\bi)}\sum_{\bj\in\cG_\bi^{\epsilon_\gamma}}\lambda_{\bj}^{s(\gamma)}e^{\gamma
     (n(\bi)-|\bi|)}p_{\bj}\label{eq:loweroneminusbound}
  \end{align}
  The sum in \cref{eq:loweroneminusbound} is the $\bbQ_1$ measure of $\cG^{\epsilon_\gamma}_{\bi}$. Then by Cram\'er's theorem \cref{eq:Cramer}
  \[
    \sum_{\bj\in\left(\cG_\bi^{\epsilon_\gamma}\right)^c}\lambda_{\bj}^{s(\gamma)}e^{\gamma
    (n(\bi)-|\bi|)}p_{\bj}\leq e^{-\beta(n(\bi)-|\bi|)}.
  \]

  Now, taking logarithms in \cref{eq:loweroneminusbound} and dividing by
  $(n(\bi)-|\bi|)\log\lambda_{\max}$, we obtain
  \begin{equation*}
    (s_{\bi}-s(\gamma))
    +\frac{\gamma|\bi|+\log p_{\bi}-\log
    4n(\bi)+\log(1-e^{-\beta(n(\bi)-|\bi|)})}{(n(\bi)-|\bi|)\log \lambda_{\max}} \geq 0.
  \end{equation*}
  The second term is bounded above by
  \begin{align*}
    \frac{\gamma|\bi|+\log p_{\bi}-\log 4n(\bi)+\log(1-e^{-\beta(n(\bi)-|\bi|)})}{(n(\bi)-|\bi|)\log \lambda_{\max}}
&\leq
\frac{|\bi|\log p_{\min}-\log 4n(\bi)+\log(1-e^{-\beta n(\bi)/2})}{\tfrac12 n(\bi)\log \lambda_{\max}}\\
&\leq2\cdot\frac{\log p_{\min}-\log4n(\emptyset)+\log(1-e^{-\beta
n(\emptyset)/2})}{n(\emptyset)\log\lambda_{\max}},
  \end{align*}
  where we have used \cref{it:condni3}, \cref{it:condni5} and the monotonicity of the above maps.
  Hence, we may set $K$ large enough such that
  $s_{\bi}-s(\gamma) < \epsilon_0$ holds for our chosen $\epsilon_0$.

  We finish the proof by showing the upper bound in \cref{eq:goodSumBound}. Choose $\epsilon_0>0$
  such that $\lambda_{\min}^{-\epsilon_0}e^{-\beta}<1$, where $\beta$ is the exponent from
  \cref{eq:Cramer}. Then choose $K\geq1$ corresponding to $\epsilon_0>0$.  A computation similar
  to that from \cref{eq:loweroneminusboundstart} to \cref{eq:loweroneminusbound}
  gives
  \begin{align*}
    1
    &\geq
    \frac{e^{\gamma|\bi|}p_{\bi}}{4n(\bi)}\sum_{\bj\in\cG_\bi^{\epsilon_\gamma}}\lambda_{\bj}^{s_{\bi}}e^{\gamma
    (n(\bi)-|\bi|)}p_{\bj}\\
    &=
    \frac{e^{\gamma|\bi|}p_{\bi}}{4n(\bi)}
    \left(\sum_{\bj\in\Sigma_{n(\bi)-|\bi|}}\lambda_{\bj}^{s_{\bi}}e^{\gamma (n(\bi)-|\bi|)}p_{\bj}
      -\sum_{\bj\in(\cG_\bi^{\epsilon_\gamma})^c}\lambda_{\bj}^{s_{\bi}}e^{\gamma
    (n(\bi)-|\bi|)}p_{\bj}\right)\\
    &=\frac{e^{\gamma|\bi|}p_{\bi}}{4n(\bi)}\left(\left(\sum_{j=1}^N\lambda_{j}^{s_{\bi}}e^{\gamma
      }p_{j}\right)^{n(\bi)-|\bi|}
      -\sum_{\bj\in(\cG_\bi^{\epsilon_\gamma})^c}\lambda_{\bj}^{s_{\bi}}e^{\gamma
    (n(\bi)-|\bi|)}p_{\bj}\right)\\
    &=\frac{e^{\gamma|\bi|}p_{\bi}}{4n(\bi)}\left(\left(\sum_{j=1}^N\lambda_{j}^{s_{\bi}}e^{\gamma
      }p_{j}\right)^{n(\bi)-|\bi|}
      -\lambda_{\min}^{(s_\bi-s(\gamma))(n(\bi)-|\bi|)}\sum_{\bj\in(\cG_\bi^{\epsilon_\gamma})^c}\lambda_{\bj}^{s(\gamma)}e^{\gamma
    (n(\bi)-|\bi|)}p_{\bj}\right)
    \\
    &\geq
    \frac{e^{\gamma|\bi|}p_{\bi}}{4n(\bi)}\left(\left(\sum_{j=1}^N\lambda_{j}^{s_{\bi}}e^{\gamma
      }p_{j}\right)^{n(\bi)-|\bi|}
    -\lambda_{\min}^{(s_\bi-s(\gamma))(n(\bi)-|\bi|)}e^{-\beta(n(\bi)-|\bi|)}\right)
  \end{align*}
  by Cram\'er's theorem \cref{eq:Cramer}, as above.
  Now, rearranging gives
  \begin{align*}
    \sum_{j=1}^N\lambda_{j}^{s_{\bi}}e^{\gamma}p_{j}
     &\leq
     \left(
       \frac{4n(\bi)}{e^{\gamma|\bi|}p_{\bi}}+
       \lambda_{\min}^{(s_\bi-s(\gamma))(n(\bi)-|\bi|)}e^{-\beta(n(\bi)-|\bi|)}
     \right)^{1/(n(\bi)-|\bi|)}\\
     &\leq \left(
       \frac{5n(\bi)}{e^{\gamma|\bi|}p_{\bi}}
     \right)^{1/(n(\bi)-|\bi|)}.\qedhere
  \end{align*}  
\end{proof}

\begin{proposition}\label{thm:martingale}
  For all $\bi\in \Sigma_*$, the sequence of random variables $(X_\bi^{(n)})$ is a martingale with
  respect to the sequence of sigma-algebras $\cF_n^\bi$.
\end{proposition}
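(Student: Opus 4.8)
The plan is to verify adaptedness, integrability, and the conditional-expectation identity by induction on $n$, simultaneously over all $\bi\in\Sigma_*$. The argument is driven by one independence observation, which I record first. Write $G_\bj:=(\mathbb 1^\bj_\bk)_{|\bk|=n(\bj),\,\bj<\bk}$ for the vector of ``branching indicators'' attached to $\bj$. Each $G_\bj$ is a function of $\bo$ restricted to the coordinates $\bigcup_{p\in P(n(\bj))}\{p+1,\dots,p+n(\bj)\}$, and since $M_{n(\bj)}+qn(\bj)\in\ell^{-1}(n(\bj))$ these all lie inside the fibre $\ell^{-1}(n(\bj))$; as $\ell$ is a function and $n(\cdot)$ is injective by (C2), the fibres $\ell^{-1}(n(\bj))$ are pairwise disjoint, so $\{G_\bj\}_{\bj\in\Sigma_*}$ is an independent family. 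Since $X_\bj^{(m)}$ is by construction a function of the $G_{\bj'}$ with $\bj'$ ranging over the constructor-subtree rooted at $\bj$, it follows that the processes $(X_{\bj'}^{(\cdot)})$, $\bj'\in\Xi_1^\bi$, together with $G_\bi$, are mutually independent.

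\textbf{Adaptedness and integrability.} Both go by induction on $n$. For adaptedness: $X_\bi^{(0)}=1$ is trivially adapted, and in the step, in $X_\bi^{(n+1)}=\sum_{|\bj|=n(\bi),\,\bi<\bj}\lambda^{s_\bi}_{\sigma^{|\bi|}(\bj)}\mathbb 1^\bi_\bj X_\bj^{(n)}$ the factor $\mathbb 1^\bi_\bj$ is $\cF_1^\bi$-measurable (it depends only on $(\sigma^p\bo)|_{n(\bi)}$ for $p\in P(n(\bi))$), and $\cF_1^\bi\subseteq\cF_{n+1}^\bi$ because the $a_m^\bi$ increase in $m$; meanwhile $X_\bj^{(n)}$ is $\cF_n^\bj$-measurable, hence $\cF_{n+1}^\bi$-measurable by the identity $\cF_{n+1}^\bi=\bigcup_{|\bj|=n(\bi),\,\bi<\bj}\cF_n^\bj$ proved above. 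For integrability: $\bbE(X_\bi^{(1)})=1$ is the defining relation for $s_\bi$; in the step, the independence just recorded gives $\bbE(\mathbb 1^\bi_\bj X_\bj^{(n)})=\bbP(\mathbb 1^\bi_\bj=1)\,\bbE(X_\bj^{(n)})=\bbP(\mathbb 1^\bi_\bj=1)$ using the inductive hypothesis at $\bj$, and summing against $\lambda^{s_\bi}_{\sigma^{|\bi|}(\bj)}$ reproduces exactly the equation defining $s_\bi$, so $\bbE(X_\bi^{(n+1)})=1$.

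\textbf{Martingale property.} Induction on $n$: assume $(X_{\bi'}^{(m)})_{m\le n}$ is an $(\cF_m^{\bi'})$-martingale for all $\bi'\in\Sigma_*$. From the recursion, $\bbE(X_\bi^{(n+1)}\mid\cF_n^\bi)=\sum_{\bj\in\Xi_1^\bi}\lambda^{s_\bi}_{\sigma^{|\bi|}(\bj)}\,\bbE(\mathbb 1^\bi_\bj X_\bj^{(n)}\mid\cF_n^\bi)$; fix $\bj\in\Xi_1^\bi$ and pull out the $\cF_n^\bi$-measurable factor $\mathbb 1^\bi_\bj$. It remains to prove $\bbE(X_\bj^{(n)}\mid\cF_n^\bi)=X_\bj^{(n-1)}$. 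Here I use: (i) $\cF_{n-1}^\bj\subseteq\cF_n^\bi$, immediate from $\cF_n^\bi=\bigcup_{\bj'\in\Xi_1^\bi}\cF_{n-1}^{\bj'}$; (ii) a monotonicity lemma: for $\bi'\in\Xi_k^\bi$ and $\bi''\in\Xi_{k-1}^\bi$ one has $|\bi'|>|\bi''|$, hence $n(\bi')>n(\bi'')$ by (C3) -- this follows by induction on $k$ from (C3) and $n(\bj)>|\bj|$, the latter a consequence of (C5) and $n(\emptyset)\ge2$; and (iii) as a consequence, the fibres $\ell^{-1}(n(\bj''))$ with $\bj''\in\Xi_{n-1}^\bj$ -- the only coordinate blocks on which the increment $X_\bj^{(n)}-X_\bj^{(n-1)}$ depends beyond $\cF_{n-1}^\bj$ -- are disjoint from, hence independent of, all coordinate information that $\cF_n^\bi$ resolves in excess of $\cF_{n-1}^\bj$. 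Given (i)--(iii), the elementary fact that an $\mathcal A$-measurable variable conditioned on $\mathcal D\vee\mathcal B$, with $\mathcal D\subseteq\mathcal A$ and $\mathcal A$ independent of $\mathcal B$, equals its conditional expectation given $\mathcal D$, yields $\bbE(X_\bj^{(n)}\mid\cF_n^\bi)=\bbE(X_\bj^{(n)}\mid\cF_{n-1}^\bj)=X_\bj^{(n-1)}$ by the inductive hypothesis at $\bj$. Substituting back, $\bbE(X_\bi^{(n+1)}\mid\cF_n^\bi)=\sum_{\bj\in\Xi_1^\bi}\lambda^{s_\bi}_{\sigma^{|\bi|}(\bj)}\mathbb 1^\bi_\bj X_\bj^{(n-1)}=X_\bi^{(n)}$, as desired.

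\textbf{Expected main difficulty.} The delicate point is step (iii): one must show, using the exact formula $a_n^\bi=\max\{\max\ell^{-1}(n(\bj))+n(\bj):\bj\in\Xi_{n-1}^\bi\}$ together with conditions (C1)--(C5) and the sparsity of $\{n_p\}$, that the prefix $\sigma$-algebra $\cF_n^\bi$ does not even partially resolve any fibre feeding the increment $X_\bj^{(n)}-X_\bj^{(n-1)}$, i.e.\ that each subtree contributes to $\cF_n^\bi$ precisely its own filtration level $\cF_{n-1}^\bj$ and nothing more. Once this coordinate-block bookkeeping is settled, the remainder is the routine independence computation above.
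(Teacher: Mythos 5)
Your proof follows the paper's argument essentially verbatim: induction over $n$ uniformly in $\bi\in\Sigma_*$, pulling out the $\cF_n^\bi$-measurable factor $\mathbb 1^\bi_\bj$, and reducing $\bbE(X_\bj^{(n)}\mid\cF_n^\bi)$ to $\bbE(X_\bj^{(n)}\mid\cF_{n-1}^\bj)=X_\bj^{(n-1)}$ via independence of the deepest generation of indicators from the conditioning $\sigma$-algebra. The step you defer as ``(iii)'' is exactly the one the paper compresses into the single assertion that $X_\bj^{(n)}$ is independent of the digits of $\bo$ beyond index $a_{n-1}^\bj$; your observations that each block of indicators $G_{\bj'}$ is determined by digits inside the fibre $\ell^{-1}(n(\bj'))$ (since $\max P(n)+n\in\ell^{-1}(n)$) and that $n(\cdot)$ grows strictly along generations are precisely what make that assertion correct, so at the one delicate point your writeup is, if anything, more explicit than the paper's.
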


\begin{proof}
  We will argue by induction. Let $n=1$ and $\bi\in \Sigma_*$. Then, since $|\bi|\le \min \ell^{-1}(n(\bi))$,
  \[
    \bbE (X_\bi^{(1)}\mid \cF_0^\bi)=\bbE(X_\bi^{(1)})=1=X_\bi^{(0)}.
  \]
  Assume, inductively, that for all $\bi\in\Sigma_*$ and for $n$ the claim holds, that is,
  \[
    \bbE(X_\bi^{(n)}\mid \cF_{n-1}^\bi)=X_{\bi}^{(n-1)}.
  \]
  Since $a_1^\bi=\max\ell^{-1}(n(\bi))<a_{n}^\bi$, the random variable $\mathbb 1_{\bj}^\bi$ is
  measurable with respect to the sigma-algebra $\cF_n^{\bi}$, and hence
  \begin{align}
    \bbE(X_{\bi}^{(n+1)}\mid \cF_n^{\bi})
    &=\sum_{|\bj|=n(\bi),
    \bi<\bj}\lambda_{\sigma_{|\bi|}(\bj)}^{s_\bi}\bbE(\mathbb{1}^{\bi}_\bj X_{\bj}^{(n)}\mid
    \cF_n^{\bi})\nonumber\\
    &=\sum_{|\bj|=n(\bi), \bi<\bj}\lambda_{\sigma_{|\bi|}(\bj)}^{s_\bi}\mathbb{1}^{\bi}_\bj\bbE(
    X_{\bj}^{(n)}\mid \cF_n^{\bi}).\label{eq:mart}
  \end{align}
  Notice now that the random variable $X_\bj^{(n)}$ is independent of digits of $\bo$ beyond index
  $a_{n-1}^\bj$. Hence conditioning on $\cF_n^\bi$ is equivalent to conditioning on $\cF_{n-1}^\bi$, and
  \[
    \bbE(X_\bj^{(n)}\mid \cF_n^{\bi})=\bbE(X_\bj^{(n)}\mid \cF_{n-1}^{\bi})=X_\bj^{(n-1)},
  \]
  where we have made use of the induction hypothesis. Continuing with this from \eqref{eq:mart},
  \[
    \sum_{|\bj|=n(\bi), \bi<\bj}\lambda_{\sigma_{|\bi|}(\bj)}^{s_\bi}\mathbb 1^{\bi}_\bj\bbE(
    X_{\bj}^{(n)}\mid \cF_n^{\bi})=\sum_{|\bj|=n(\bi),
    \bi<\bj}\lambda_{\sigma_{|\bi|}(\bj)}^{s_\bi}\mathbb 1^{\bi}_\bj X_{\bj}^{(n-1)}=X_\bi^{(n)}.
  \]
  This proves the martingale property.
\end{proof}

\begin{proposition}\label{thm:L2}
  There exists a $K\geq1$ such that for every constructor function with $n(\emptyset)\geq K$ there
  is $c>0$ such that for all $n\in \bbN, \bi\in \Sigma_*$, we have
  \[
    \bbE((X_\bi^{(n)})^2)<c.
  \]
\end{proposition}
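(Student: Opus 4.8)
The plan is to establish a self-improving recursion for $c_n:=\sup_{\bi\in\Sigma_*}\bbE\bigl((X_\bi^{(n)})^2\bigr)$ which, once $n(\emptyset)$ is large enough, reads $c_{n+1}\le\tfrac12 c_n+100$; since $c_0=1$, induction then gives $c_n\le 200$ for every $n$, which is the assertion. (Throughout one may assume $\gamma>0$, so that $s(\gamma)=P_\gamma(1)>0$; only $\gamma$ close to $\alpha$ is used in the subsequent dimension bound.)

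First I would square the defining relation for $X_\bi^{(n+1)}$ and split off the diagonal:
\[
  \bbE\bigl((X_\bi^{(n+1)})^2\bigr)=\sum_{|\bj|=n(\bi)}\sum_{|\bk|=n(\bi)}\lambda^{s_\bi}_{\sigma^{|\bi|}(\bj)}\lambda^{s_\bi}_{\sigma^{|\bi|}(\bk)}\,\bbE\bigl(\mathbb 1^\bi_\bj\,\mathbb 1^\bi_\bk\, X^{(n)}_\bj\, X^{(n)}_\bk\bigr).
\]
The crucial structural input — which I would isolate as a short lemma on coordinate dependence — is that $\bbP$ is a Bernoulli (product) measure and the three groups of factors depend on pairwise disjoint blocks of coordinates of $\bo$. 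Indeed, $\mathbb 1^\bi_\bj$ only inspects the windows $\{p,\dots,p+n(\bi)-1\}$ for $p\in P(n(\bi))$; since $\max P(n(\bi))+n(\bi)\in\ell^{-1}(n(\bi))$ by the definition of $P$, these windows lie inside $[\,\min\ell^{-1}(n(\bi)),\,\max\ell^{-1}(n(\bi))\,)$. On the other hand $X^{(n)}_\bj$ is built from indicators attached to the descendants of $\bj$ in the construction tree, all of which carry $n$-values strictly larger than $n(\bi)$ by \cref{it:condni3}, so by monotonicity of $\ell$ the coordinates they inspect lie to the right of $\max\ell^{-1}(n(\bi))$; hence $(\mathbb 1^\bi_\bj,\mathbb 1^\bi_\bk)$ is independent of $(X^{(n)}_\bj,X^{(n)}_\bk)$. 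Similarly, for $\bj\ne\bk$ with $|\bj|=|\bk|$ the descendant sets of $\bj$ and $\bk$ are disjoint, and since the constructor function is injective (C2) the corresponding coordinate blocks are disjoint as well, so $X^{(n)}_\bj\perp X^{(n)}_\bk$. Therefore, recalling $\bbE(X^{(n)}_\bj)=1$ from \cref{thm:martingale}, the diagonal term equals $\sum_{\bj}\lambda^{2s_\bi}_{\sigma^{|\bi|}(\bj)}\bbE(\mathbb 1^\bi_\bj)\,\bbE\bigl((X^{(n)}_\bj)^2\bigr)$ and the off-diagonal term equals $\sum_{\bj\ne\bk}\lambda^{s_\bi}_{\sigma^{|\bi|}(\bj)}\lambda^{s_\bi}_{\sigma^{|\bi|}(\bk)}\bbE(\mathbb 1^\bi_\bj\mathbb 1^\bi_\bk)$.

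Next I would bound both terms using \cref{thm:uniforms}. Writing $m(\bi)=\#P(n(\bi))$, one has $\bbE(\mathbb 1^\bi_\bj)=1-(1-p_\bj)^{m(\bi)}\le m(\bi)p_\bj$ by \cref{eq:repeat}, and $\bbE(\mathbb 1^\bi_\bj\mathbb 1^\bi_\bk)\le m(\bi)^2 p_\bj p_\bk$ for $\bj\ne\bk$ (the two distinct words must be matched at two distinct positions among the $m(\bi)$ disjoint reads). Using $\sum_{|\bj|=n(\bi),\,\bi<\bj}\lambda^{s_\bi}_{\sigma^{|\bi|}(\bj)}p_\bj=p_\bi\bigl(\sum_j\lambda_j^{s_\bi}p_j\bigr)^{n(\bi)-|\bi|}$ together with \cref{eq:goodSumBound} (after dividing out $e^{\gamma}$), this sum is at most $5n(\bi)e^{-\gamma n(\bi)}$, and $\lambda^{2s_\bi}_{\sigma^{|\bi|}(\bj)}\le\lambda_{\max}^{s_\bi(n(\bi)-|\bi|)}\lambda^{s_\bi}_{\sigma^{|\bi|}(\bj)}$ since $s_\bi\ge 0$. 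Combining these with $m(\bi)\le 2e^{\gamma n(\bi)}/n(\bi)$ from \cref{eq:mi} yields that the diagonal term is at most $10\,\lambda_{\max}^{s_\bi(n(\bi)-|\bi|)}\,c_n$ and the off-diagonal term is at most $m(\bi)^2\bigl(5n(\bi)e^{-\gamma n(\bi)}\bigr)^2\le 100$.

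Finally I would kill the contraction factor. By \cref{thm:uniforms} with $\epsilon_0:=s(\gamma)/2$ there is $K$ with $s_\bi\ge s(\gamma)/2>0$ for every constructor function with $n(\emptyset)\ge K$, and $n(\bi)-|\bi|\ge n(\bi)/2\ge n(\emptyset)/2$ by \cref{it:condni5} and \cref{it:condni3}; hence $\lambda_{\max}^{s_\bi(n(\bi)-|\bi|)}\le\lambda_{\max}^{s(\gamma)n(\emptyset)/4}\to0$ as $n(\emptyset)\to\infty$, so after enlarging $K$ we may assume $10\lambda_{\max}^{s_\bi(n(\bi)-|\bi|)}\le\tfrac12$ uniformly in $\bi$. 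Putting the pieces together gives $\bbE\bigl((X_\bi^{(n+1)})^2\bigr)\le\tfrac12 c_n+100$ for every $\bi$, hence $c_{n+1}\le\tfrac12 c_n+100$, and the induction closes with $c=200$. I expect the main obstacle to be the first structural step: pinning down precisely which coordinate blocks of $\bo$ the random variables $\mathbb 1^\bi_\bj$, $X^{(n)}_\bj$ and $X^{(n)}_\bk$ depend on, and verifying their pairwise disjointness from the definition of $P(n)$, the monotonicity of $\ell$, and the constructor-function properties (C2)–\cref{it:condni3}. Once that independence is in place, what remains is the routine $L^2$ cascade estimate above.
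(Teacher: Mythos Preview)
Your proof is correct and follows essentially the same recursion-on-$\sup_\bi\bbE((X_\bi^{(n)})^2)$ strategy as the paper, with the same diagonal/off-diagonal split and the same independence structure. The only difference is in how the two pieces are bounded: the paper controls the diagonal directly via the normalising identity $\sum_{\bj}\lambda^{s_\bi}_{\sigma^{|\bi|}(\bj)}(1-(1-p_\bj)^{m(\bi)})=1$ (getting $S_1\le\lambda_{\max}^{s_\bi(n(\bi)-|\bi|)}c_n$) and the off-diagonal via the negative-correlation inequality $(1-p_{\bj_1}-p_{\bj_2})^{m}\le(1-p_{\bj_1})^{m}(1-p_{\bj_2})^{m}$ (getting $S_2\le1$), whereas you replace both by the cruder union-bound $1-(1-p_\bj)^{m(\bi)}\le m(\bi)p_\bj$ together with \cref{eq:goodSumBound} and \cref{eq:mi}, picking up harmless constants $10$ and $100$. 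Your route avoids the correlation inequality at the cost of invoking \cref{thm:uniforms}; both close the same contractive recursion.
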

\begin{proof}
  We will find $\tilde \lambda<1$ such that for all $n\in \bbN$,
  \[
    \max_{\bi\in \Sigma_*}\bbE((X_\bi^{(n)})^2)\le 1+\max_{\bi\in \Sigma_*}\bbE((X_\bi^{(n-1)})^2)\tilde\lambda,
  \]
  from which it follows by induction that for all $n\in\bbN$,
  \[
    \max_{\bi\in \Sigma_*}\bbE((X_\bi^{(n)})^2)\le \frac{1}{1-\tilde\lambda}.
  \]

  Fix $\bi\in \Sigma_*$ and $n\in\bbN$. Then
  \begin{align}
    \bbE((X_\bi^{(n+1)})^2)
    &=\sum_{|\bj|=n(\bi),
    \bi<\bj}\lambda^{2s_\bi}_{\sigma^{|\bi|}(\bj)}\bbE(\mathbb{1}^{\bi}_\bj(X_\bj^{(n)})^2)\nonumber\\
    &\hspace{10em}+\sum_{\bj_1\neq \bj_2, |\bj_k|=n(\bi),
    \bi<\bj_k}\lambda^{s_\bi}_{\sigma^{|\bi|}(\bj_1)}\lambda^{s_\bi}_{\sigma^{|\bi|}(\bj_2)}
    \bbE(\mathbb 1^{\bi}_{\bj_1}\mathbb 1^{\bi}_{\bj_2}X_{\bj_1}^{(n)}X_{\bj_2}^{(n)})\label{eq:mart2}\\
    &=:S_1+S_2.\nonumber
  \end{align}
  Note that since $n(\bi)\neq n(\bj)$, $\mathbb 1^\bi_\bj$ is independent of $X_{\bj}^{(n)}$. Hence,
  $S_1$ in the sum above is equal to
  \[
    S_1 = \sum_{|\bj|=n(\bi), \bi<\bj}
    \lambda^{2s_\bi}_{\sigma^{|\bi|}(\bj)}(1-(1-p_\bj)^{m(\bi)})\bbE((X_{\bj}^{(n)})^2)\le
    \lambda_{\max}^{(n(\bi)-|\bi|)s_{\bi}}\max_{|\bj|=n(\bi), \bi<\bj}\bbE((X_{\bj}^{(n)})^2),
  \]
  where we have used the definition of $s_\bi$. Let us choose $\epsilon_0>0$ such that
  $s(\gamma)-\epsilon_0>0$. By \cref{thm:uniforms}, there exists a $K\geq1$ such that for every
  constructor function with $n(\emptyset)\geq K$, $s_\bi\geq s(\gamma)-\epsilon_0$ for every
  $\bi\in\Sigma_*$. Thus,
  $\lambda_{\max}^{s_\bi(n(\bi)-|\bi|)}\le\lambda_{\max}^{(s(\gamma)-\epsilon_0)}=:\tilde
  \lambda<1$, where we have used the fact that $n(\bi)-|\bi|\geq \frac12n(\bi)\geq
  \frac12n(\emptyset)\geq1$.

  For the second term in the sum \eqref{eq:mart2} we note first of all that since $n(\bi)\neq
  n(\bj_k)$,  $\mathbb 1^\bi_{\bj_1}\mathbb 1^\bi_{\bj_2}$ is independent of $X_{\bj_k}^{(n)}$. Since,
  furthermore, $\bbE(X_{\bj_k}^{(n)})=1$ by definition, we obtain for $S_2$
  \begin{align*}
    &\sum_{\bj_1\neq \bj_2, |\bj_k|=n(\bi),
    \bi<\bj_k}\lambda^{s_\bi}_{\sigma^{|\bi|}}(\bj_1)\lambda^{s_\bi}_{\sigma^{|\bi|}}(\bj_2)
    \bbE(\mathbb 1^{\bi}_{\bj_1}\mathbb 1^{\bi}_{\bj_2}X_{\bj_1}^{(n)}X_{\bj_2}^{(n)})\\[1.0em]
    &=\sum_{\bj_1\neq \bj_2, |\bj_k|=n(\bi),
    \bi<\bj_k}\lambda^{s_\bi}_{\sigma^{|\bi|}(\bj_1)}\lambda^{s_\bi}_{\sigma^{|\bi|}(\bj_2)}(1-(1-p_{\bj_1})^{m(\bi)}
    - (1-p_{\bj_2})^{m(\bi)} + (1-p_{\bj_2}-p_{\bj_2})^{m(\bi)}).
  \end{align*}
  Here, from the elementary computation
  \begin{align}
    1-p_{\bj_1} - p_{\bj_2}&\le 1-p_{\bj_1} - p_{\bj_2}+p_{\bj_1}p_{\bj_2}\Leftrightarrow\nonumber\\
    (1-p_{\bj_1} - p_{\bj_2})^{m(\bi)}&\le (1-p_{\bj_1})^{m(\bi)} (1- p_{\bj_2})^{m(\bi)}\Leftrightarrow\nonumber\\
    1-(1-p_{\bj_1})^{m(\bi)}-(1- p_{\bj_2})^{m(\bi)}+(1-p_{\bj_1} - p_{\bj_2})^{m(\bi)}&\le
    (1-(1-p_{\bj_1})^{m(\bi)}) (1-(1- p_{\bj_2})^{m(\bi)})\label{eq:poscor}
  \end{align}
  we can deduce that
  \[
    S_2\le \left(\sum_{|\bj|=n(\bi), \bi<\bj}\lambda^{s_\bi}_{\sigma^{|\bi|}(\bj)} (1-(1-p_{\bj})^{m(\bi)})\right)^2,
  \]
  which equals $1$ by definition of $s_\bi$. Thus, $S_1+S_2\le 1+\max\bbE((X_\bj^{(n)})^2)\tilde
  \lambda$, as in the claim.
\end{proof}

The upshot of the content of this section is the following: For each $\bi\in \Sigma_*$ there is a
limiting value $X_\bi$, which we will be able to use for the definition of the mass distribution on
the Cantor subset of $R(\bo, \ell)$ from \cref{sec:cantor}.

\begin{proposition}\label{thm:summability}
  For all $\bi\in \Sigma_*$, there is $X_\bi$ such that $X_\bi^{(n)}\to X_\bi$ as $n\to \infty$ almost
  surely and in $L^2$. Furthermore, there exists a $C>0$ such that $\bbE(X_{\bi}^2)\leq C$, $\bbE(X_\bi)=1$ and 
  \[
    X_\bi=\sum_{|\bj|=n(\bi), \bi<\bj} \lambda_{\sigma^{|\bi|}(\bj)}^{s_\bi}\mathbb 1_\bj^\bi X_\bj
  \]
  for all $\bi\in\Sigma_*$.
\end{proposition}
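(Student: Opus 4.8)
The plan is to apply the $L^2$ martingale convergence theorem, the hypotheses for which have already been verified. Fix $\bi\in\Sigma_*$. By \cref{thm:martingale}, the sequence $(X_\bi^{(n)})_{n\in\bbN}$ is a martingale with respect to the filtration $(\cF_n^\bi)$, and by \cref{thm:L2} there is a constant $c>0$, independent of both $\bi$ and $n$ (valid for any constructor function with $n(\emptyset)\geq K$, with $K$ as in \cref{thm:L2}), such that $\bbE((X_\bi^{(n)})^2)<c$ for all $n$. A martingale bounded in $L^2$ converges almost surely and in $L^2$; let $X_\bi$ denote its limit. Since $X_\bi^{(n)}\geq0$ for every $n$ (an immediate induction on $n$ from $X_\bi^{(0)}=1$ and the defining recursion), we also get $X_\bi\geq0$ almost surely. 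By Fatou's lemma, $\bbE(X_\bi^2)\leq\liminf_{n\to\infty}\bbE((X_\bi^{(n)})^2)\leq c=:C$, which is the uniform bound in the statement. Convergence in $L^2$ entails convergence in $L^1$, so $\bbE(X_\bi)=\lim_{n\to\infty}\bbE(X_\bi^{(n)})=\bbE(X_\bi^{(0)})=1$, using the martingale property.

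Next I would pass to the limit in the defining recursion
\[
  X_\bi^{(n+1)}=\sum_{|\bj|=n(\bi),\ \bi<\bj}\lambda_{\sigma^{|\bi|}(\bj)}^{s_\bi}\,\mathbb 1_\bj^\bi\,X_\bj^{(n)}.
\]
This is a finite sum, each coefficient $\lambda_{\sigma^{|\bi|}(\bj)}^{s_\bi}\mathbb 1_\bj^\bi$ lies in $[0,1]$, and for each $\bj$ occurring in it we have $X_\bj^{(n)}\to X_\bj$ in $L^2$ by the first part. Hence the right-hand side converges in $L^2$ (a finite linear combination of $L^2$-convergent sequences), while the left-hand side converges in $L^2$ to $X_\bi$; identifying the two limits gives
\[
  X_\bi=\sum_{|\bj|=n(\bi),\ \bi<\bj}\lambda_{\sigma^{|\bi|}(\bj)}^{s_\bi}\,\mathbb 1_\bj^\bi\,X_\bj
\]
as claimed. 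Alternatively, one may pass to the limit almost surely along a subsequence realising the a.s.\ convergence of all the finitely many terms simultaneously.

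There is no real obstacle here: the substance has already been absorbed into \cref{thm:martingale} and, above all, \cref{thm:L2}, where the choice of the exponents $s_\bi$, the positive-correlation inequality for $\{1-(1-p_{\bj_k})^{m(\bi)}\}$, and the constructor-function conditions (C1)--(C5) do the heavy lifting. The only point worth flagging is the \emph{uniformity} of $C$ over all $\bi\in\Sigma_*$: it is available precisely because the bound $c=1/(1-\tilde\lambda)$ from \cref{thm:L2}, with $\tilde\lambda=\lambda_{\max}^{s(\gamma)-\epsilon_0}$, depends on no feature of the particular word $\bi$. This uniformity is exactly what will make the random measure constructed in the next subsection — whose mass on a cylinder $[\bi]$ is built from $X_\bi$ — have finite and controllable moments.
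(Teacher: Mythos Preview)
Your proof is correct and follows exactly the same approach as the paper, which simply states that the result is immediate from \cref{thm:L2}, \cref{thm:martingale}, and Doob's $L^2$-martingale convergence theorem. You have merely spelled out the details (the uniform second-moment bound via Fatou, $\bbE(X_\bi)=1$ via $L^1$ convergence, and passage to the limit in the finite recursion) that the paper leaves implicit.
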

\begin{proof}
  This is immediate from \cref{thm:L2,thm:martingale} and Doob's $L^2$-martingale convergence
  theorem \cite[Theorem~VII.4.1]{Doob}.
\end{proof}

We will define a random measure $\mu=\mu_\bo$ on $\Sigma$ using the weights $X_\bi$ from the last
section, for cylinders of the following form: Consider $[\bi_1\bi_2\dots\bi_n]$ where
$n(\emptyset)=\bi_1$, $n(\bi_1)=\bi_1\bi_2$, and so on, up to $n(\bi_1\dots\bi_{n-1})=|\bi_1\dots
\bi_n|$. For any cylinder of this form, set
\begin{equation*}
  \mu([\bi_1\dots\bi_n]):=\lambda_{\bi_1}^{s_\emptyset}\lambda_{\bi_2}^{s_{{\bi_1}}}\cdots
  \lambda_{\bi_n}^{s_{\bi_1\dots\bi_{n-1}}}\mathbb 1_{\bi_1}^{\emptyset} \dots \mathbb
  1_{\bi_{1}\dots \bi_n}^{\bi_1\dots \bi_{n-1}}X_{\bi_1\dots \bi_n}.
\end{equation*}
By \cref{thm:summability} and Kolmogorov's extension theorem \cite[Theorem~1.7.8]{Tao}, for almost
every $\bo$ the measure $\mu$ is a well-defined Borel measure on $\Sigma$. By the construction, it
is supported on the random Cantor set defined in \cref{sec:cantor} and satisfies
$\mu(\Sigma)=\mu(R(\bo, \ell))=X_\emptyset$. Note that $\mu$ can be a zero measure, but using the
martingale property, $\bbE(\mu(\Sigma))=1$. For the remainder of this section we study properties
of $\mu$.

\subsection{Properties of the random measure on the Cantor set}\label{subsec:measure}

\begin{lemma}\label{lem:lemma6}
  There exists a $K\geq1$ such that for every constructor function with $n(\emptyset)\geq K$ we
  have that for every $\bi,\bj\in\Sigma_*$ with $|\bj|\leq n(\bi)-|\bi|$,
  $$
  \sum_{|\hbar|=n(\bi)-|\bi|-|\bj|}\lambda_{\hbar}^{s_\bi}\left(1-(1-p_{\bi\bj\hbar})^{m(\bi)}\right)\leq
  10p_{\bj}e^{\gamma|\bj|}.
  $$
\end{lemma}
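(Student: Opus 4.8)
The plan is to reduce the claimed inequality to a short computation built from three ingredients already available: the linearisation $1-(1-x)^{m}\le mx$ from \eqref{eq:repeat}, the multiplicativity $\sum_{|\hbar|=k}\lambda_\hbar^{s_\bi}p_\hbar=\bigl(\sum_{j}\lambda_j^{s_\bi}p_j\bigr)^{k}$ of Bernoulli weights, and the upper bound on $\sum_{j}\lambda_j^{s_\bi}p_je^{\gamma}$ from \eqref{eq:goodSumBound} in \cref{thm:uniforms}. I would take $K$ to be the constant furnished by that upper bound in \cref{thm:uniforms}, fix a constructor function with $n(\emptyset)\ge K$, and abbreviate $L=n(\bi)-|\bi|$ and $k=L-|\bj|$; the hypothesis $|\bj|\le n(\bi)-|\bi|$ makes $k\ge0$, and \cref{it:condni5} gives $L\ge n(\bi)/2\ge1$.

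First I would apply $1-(1-p_{\bi\bj\hbar})^{m(\bi)}\le m(\bi)p_{\bi\bj\hbar}=m(\bi)p_\bi p_\bj p_\hbar$, factor the sum over $\hbar$, and rewrite
\[
  \sum_{|\hbar|=k}\lambda_\hbar^{s_\bi}p_\hbar=\Bigl(\sum_{j=1}^N\lambda_j^{s_\bi}p_j\Bigr)^{k}=e^{-\gamma k}\Bigl(\sum_{j=1}^N\lambda_j^{s_\bi}p_je^{\gamma}\Bigr)^{k},
\]
so that the left-hand side of the lemma is at most $m(\bi)\,p_\bi p_\bj\,e^{-\gamma k}\bigl(\sum_{j}\lambda_j^{s_\bi}p_je^{\gamma}\bigr)^{k}$. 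Then I would bound $m(\bi)=\#P(n(\bi))\le 2e^{\gamma n(\bi)}/n(\bi)$ by \eqref{eq:mi}, and bound $\bigl(\sum_{j}\lambda_j^{s_\bi}p_je^{\gamma}\bigr)^{k}\le A^{k/L}$ using \eqref{eq:goodSumBound}, where $A:=5n(\bi)/(p_\bi e^{\gamma|\bi|})$. Because \cref{it:condni4} forces $A>10>1$ while $0\le k/L\le1$, we have $A^{k/L}\le A$; combining this with $e^{-\gamma k}=e^{-\gamma L}e^{\gamma|\bj|}$ and $e^{\gamma n(\bi)}e^{-\gamma L}=e^{\gamma|\bi|}$, all of $n(\bi)$, $p_\bi$ and the powers of $e^{\gamma}$ cancel:
\[
  m(\bi)\,p_\bi p_\bj\,e^{-\gamma k}\Bigl(\sum_{j}\lambda_j^{s_\bi}p_je^{\gamma}\Bigr)^{k}\le\frac{2e^{\gamma n(\bi)}}{n(\bi)}\cdot p_\bi p_\bj\cdot e^{-\gamma L}e^{\gamma|\bj|}\cdot\frac{5n(\bi)}{p_\bi e^{\gamma|\bi|}}=10\,p_\bj e^{\gamma|\bj|},
\]
which is the assertion.

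I do not expect a genuine obstacle: all the analytic substance sits in \cref{thm:uniforms}, and the rest is bookkeeping designed so that the constant comes out as exactly $2\cdot5=10$. The one place that needs a moment's care is the step $A^{k/L}\le A$, which relies on \cref{it:condni4} to guarantee $A\ge1$ — this is precisely the point at which the inhomogeneity of the contraction ratios is absorbed into a crude constant rather than refined. As a sanity check, for $\bj=\emptyset$ the left-hand side collapses to $1$ by the defining equation of $s_\bi$, consistent with the bound $10\,p_\emptyset e^{0}=10$.
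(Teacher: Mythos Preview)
Your proof is correct and follows essentially the same route as the paper's: linearise via \eqref{eq:repeat}, bound $m(\bi)$ by \eqref{eq:mi}, apply the upper estimate \eqref{eq:goodSumBound} from \cref{thm:uniforms}, and finish with \cref{it:condni4}. The only cosmetic difference is that the paper keeps the exponent $k/L$ and bounds $\bigl(p_\bi e^{\gamma|\bi|}/n(\bi)\bigr)^{|\bj|/L}\le 1$, whereas you equivalently use $A^{k/L}\le A$; both are the same inequality read in opposite directions.
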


\begin{proof}
  By \cref{eq:repeat} and \cref{eq:mi},
  \begin{align*}
    \sum_{|\hbar|=n(\bi)-|\bi|-|\bj|}\lambda_{\hbar}^{s_\bi}\left(1-(1-p_{\bi\bj\hbar})^{m(\bi)}\right)
      &\leq2\frac{p_\bi
      e^{\gamma|\bi|}}{n(\bi)}p_{\bj}e^{\gamma|\bj|}\sum_{|\hbar|=n(\bi)-|\bi|-|\bj|}\lambda_{\hbar}^{s_\bi}p_{\hbar}e^{\gamma|\hbar|}\\
      &=2\frac{p_\bi e^{\gamma|\bi|}}{n(\bi)}p_{\bj}e^{\gamma|\bj|}\left(\sum_{j=1}^m\lambda_{j}^{s_\bi}p_{j}e^{\gamma}\right)^{n(\bi)-|\bi|-|\bj|}\\
      \intertext{by \cref{thm:uniforms}, one can choose $K\geq1$ such that for every constructor function we get}
      &\leq 2\frac{p_\bi e^{\gamma|\bi|}}{n(\bi)}p_{\bj}e^{\gamma|\bj|}\left(\frac{5n(\bi)}{p_\bi
      e^{\gamma|\bi|}}\right)^{\frac{n(\bi)-|\bi|-|\bj|}{n(\bi)-|\bi|}}\\
      &\leq 10p_{\bj}e^{\gamma|\bj|}\left(\frac{p_\bi e^{\gamma|\bi|}}{n(\bi)}\right)^{\frac{|\bj|}{n(\bi)-|\bi|}}\\
      \intertext{by \cref{it:condni4}}
      &\leq 10p_{\bj}e^{\gamma|\bj|}.
  \end{align*}
\end{proof}

\begin{lemma}\label{lem:lemma7}
  For all $\epsilon_0>0$ there exists a $K\geq1$ such that for every constructor function with
  $n(\emptyset)\geq K$ the following holds: For all $\bj\in \Sigma_*$,
  \begin{equation*}
    \bbE(\mu([\bj]))\le C \lambda_{\min}^{-\epsilon_0 |\bj|}\bbQ_1([\bj]),
  \end{equation*}
  where $\bbQ_1$ is the Bernoulli measure with the probability vector $q_i=\lambda_i^{s(\gamma)}p_ie^\gamma$.
\end{lemma}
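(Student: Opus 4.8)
The plan is to reduce everything to an exact expectation identity for cylinders of \emph{constructor shape} and then sum. Fix $\bv=\bi_1\bi_2\dots\bi_n$ with $|\bi_1|=n(\emptyset)$, $|\bi_1\bi_2|=n(\bi_1)$, \dots, $|\bi_1\dots\bi_n|=n(\bi_1\dots\bi_{n-1})$, so that $\mu([\bv])=\big(\prod_{k=1}^n\lambda_{\bi_k}^{s_{\bi_1\dots\bi_{k-1}}}\big)\big(\prod_{k=1}^n\mathbb 1^{\bi_1\dots\bi_{k-1}}_{\bi_1\dots\bi_k}\big)X_{\bi_1\dots\bi_n}$. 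Each $\mathbb 1^{\bi_1\dots\bi_{k-1}}_{\bi_1\dots\bi_k}$ depends only on the digits of $\bo$ sitting in the length-$n(\bi_1\dots\bi_{k-1})$ windows based at the indices of $P(n(\bi_1\dots\bi_{k-1}))$; since $n$ is injective on $\Sigma_*$ and $n(\bj)>n(\bi)$ whenever $|\bj|>|\bi|$ (\cref{def:constructor}) and $\ell$ is monotone, the index range used at scale $n(\bi_1\dots\bi_{k-1})$ lies inside $[\min\ell^{-1}(n(\bi_1\dots\bi_{k-1})),\max\ell^{-1}(n(\bi_1\dots\bi_{k-1}))]$, and these ranges are pairwise disjoint over $k$ (this is exactly the independence already exploited in \cref{thm:martingale}); moreover $X_{\bi_1\dots\bi_n}$ only involves strictly deeper scales. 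Hence the indicators and $X_{\bi_1\dots\bi_n}$ are independent, and using $\bbE(X_{\bi_1\dots\bi_n})=1$ (\cref{thm:summability}),
\[
  \bbE\big(\mu([\bi_1\dots\bi_n])\big)=\prod_{k=1}^n\lambda_{\bi_k}^{s_{\bi_1\dots\bi_{k-1}}}\big(1-(1-p_{\bi_1\dots\bi_k})^{m(\bi_1\dots\bi_{k-1})}\big).
\]

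For an arbitrary $\bj\in\Sigma_*$, let $\mathbf w:=\bi_1\dots\bi_{n-1}$ be the prefix of $\bj$ at the largest construction level $\le|\bj|$ (so $\mathbf w=\bj|_{|\mathbf w|}$) and write $\bj=\mathbf w\,\mathbf u$ with $|\mathbf u|=|\bj|-|\mathbf w|\le n(\mathbf w)-|\mathbf w|$; then $[\bj]=\bigsqcup_{|\hbar|=n(\mathbf w)-|\bj|}[\mathbf w\,\mathbf u\,\hbar]$ is a finite disjoint union of constructor cylinders. By finite additivity and the identity above,
\[
  \bbE\big(\mu([\bj])\big)=\Big(\prod_{k=1}^{n-1}\lambda_{\bi_k}^{s_{\bi_1\dots\bi_{k-1}}}\big(1-(1-p_{\bi_1\dots\bi_k})^{m(\bi_1\dots\bi_{k-1})}\big)\Big)\,\lambda_{\mathbf u}^{s_{\mathbf w}}\sum_{|\hbar|=n(\mathbf w)-|\bj|}\lambda_{\hbar}^{s_{\mathbf w}}\big(1-(1-p_{\mathbf w\mathbf u\hbar})^{m(\mathbf w)}\big),
\]
and \cref{lem:lemma6} (with $\bi=\mathbf w$, $\bj=\mathbf u$) bounds the last sum by $10\,p_{\mathbf u}e^{\gamma|\mathbf u|}$; when $\mathbf u=\emptyset$ the sum equals $1$, and when $n=1$ the product over $k$ is empty.

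It then remains to telescope. By $1-(1-x)^m\le mx$ (\cref{eq:repeat}), $m(\bi_1\dots\bi_{k-1})\le 2e^{\gamma n(\bi_1\dots\bi_{k-1})}/n(\bi_1\dots\bi_{k-1})$ (\cref{eq:mi}), together with the constructor identities $n(\bi_1\dots\bi_{k-1})=|\bi_1\dots\bi_k|$ and $e^{\gamma|\bi_1\dots\bi_k|}=e^{\gamma|\bi_1\dots\bi_{k-1}|}e^{\gamma|\bi_k|}$, the $k$-th factor is at most $\big(2\,p_{\bi_1\dots\bi_{k-1}}e^{\gamma|\bi_1\dots\bi_{k-1}|}/n(\bi_1\dots\bi_{k-1})\big)\,\lambda_{\bi_k}^{s_{\bi_1\dots\bi_{k-1}}}p_{\bi_k}e^{\gamma|\bi_k|}$, where the bracketed quantity is $<1$ by \cref{it:condni4}. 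Hence $\bbE(\mu([\bj]))\le 10\,\lambda_{\mathbf u}^{s_{\mathbf w}}p_{\mathbf u}e^{\gamma|\mathbf u|}\prod_{k=1}^{n-1}\lambda_{\bi_k}^{s_{\bi_1\dots\bi_{k-1}}}p_{\bi_k}e^{\gamma|\bi_k|}$; dividing by $\bbQ_1([\bj])=\lambda_{\bj}^{s(\gamma)}p_{\bj}e^{\gamma|\bj|}=\lambda_{\mathbf u}^{s(\gamma)}p_{\mathbf u}e^{\gamma|\mathbf u|}\prod_{k=1}^{n-1}\lambda_{\bi_k}^{s(\gamma)}p_{\bi_k}e^{\gamma|\bi_k|}$ leaves the factor $10\,\lambda_{\mathbf u}^{s_{\mathbf w}-s(\gamma)}\prod_{k=1}^{n-1}\lambda_{\bi_k}^{s_{\bi_1\dots\bi_{k-1}}-s(\gamma)}$. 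By \cref{thm:uniforms} choose $K$ so that $0\le s(\gamma)-s_{\bi}<\epsilon_0$ for all $\bi\in\Sigma_*$ whenever $n(\emptyset)\ge K$; since $\lambda_{\ba}^{c}\le\lambda_{\min}^{-\epsilon_0|\ba|}$ for any word $\ba$ and any $c\in(-\epsilon_0,0]$, this factor is at most $10\,\lambda_{\min}^{-\epsilon_0(|\mathbf u|+\sum_{k}|\bi_k|)}=10\,\lambda_{\min}^{-\epsilon_0|\bj|}$, and taking $C\ge10$ finishes the proof.

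The one genuinely delicate point is the first one: pinning down exactly which digits of $\bo$ each indicator and each $X_{\bi_1\dots\bi_n}$ depends on, so that the expectation factorises — but this is the same independence already used for the martingale, which I would simply invoke. Everything afterwards is bookkeeping; the only real idea is that the overcounting factors $p_{\bi_1\dots\bi_{k-1}}e^{\gamma|\bi_1\dots\bi_{k-1}|}/n(\bi_1\dots\bi_{k-1})<\tfrac12$ supplied by \cref{it:condni4} are precisely what prevents an otherwise uncontrolled $1/(p_{\mathbf w}e^{\gamma|\mathbf w|})$ from surviving into the comparison with $\bbQ_1$.
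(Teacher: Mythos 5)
Your proposal is correct and follows essentially the same route as the paper: decompose $[\bj]$ along the construction levels into constructor-shape cylinders, use the independence of the per-scale indicators and the terminal martingale limit to factor the expectation, bound the tail sum by \cref{lem:lemma6}, absorb the factors $2p_{\bi_1\dots\bi_{k-1}}e^{\gamma|\bi_1\dots\bi_{k-1}|}/n(\bi_1\dots\bi_{k-1})<1$ via \cref{it:condni4}, and compare with $\bbQ_1([\bj])$ using the uniform bound $0\le s(\gamma)-s_{\bi}<\epsilon_0$ from \cref{thm:uniforms}. The only cosmetic difference is that you first state the exact expectation identity for constructor cylinders and then sum, whereas the paper writes $\mu([\bj])$ directly as in \cref{eq:muj}; the content is identical.
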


\begin{proof}
  Let $\bj\in\Sigma_*$ be arbitrary. Since $\{\Xi_\ell^\emptyset\}_{\ell\in\bbN}$ is a monotone
  refining sequence of partitions of $\Sigma$, there exists a maximal $n\in\bbN$ such that for
  every $k\leq n$ and every $\bi\in\Xi_k^\emptyset$, $[\bj]\cap[\bi]=\emptyset$ or
  $[\bj]\subsetneq[\bi]$. Thus, there exists $\bi_1,\ldots,\bi_n,\bj'\in\Sigma_*$ such that
  $\bj=\bi_1\dots\bi_n\bj'$ and $\bi_1\dots\bi_k\in\Xi_k^\emptyset$ for every $k$, moreover, for
  every $\hbar\in\Sigma_*$ with $|\hbar|=n(\bi_1\dots\bi_n)-|\bj|$,
  $\bj\hbar\in\Xi_{n+1}^\emptyset$. Hence,
  \begin{equation}\label{eq:muj}
    \mu([\bj])=\lambda_{\bi_1}^{s_\emptyset}\lambda_{\bi_2}^{s_{{\bi_1}}}\cdots
    \lambda_{\bi_n}^{s_{\bi_1\dots\bi_{n-1}}}\mathbb 1_{\bi_1}^{\emptyset} \dots \mathbb
    1_{\bi_{1}\dots \bi_n}^{\bi_1\dots
    \bi_{n-1}}\lambda_{\bj'}^{s_{\bi_1\dots\bi_n}}\sum_{|\hbar|=n(\bi_1\dots\bi_n)-|\bj|}\lambda_{\hbar}^{s_{\bi_1\dots\bi_n}}\mathbb
    1_{\bj\hbar}^{\bi_1\dots\bi_n}X_{\bj\hbar}.
  \end{equation}
  Then by the independence of the indicators and $X_{\bj\hbar}$,
  \begin{align*}
    \bbE(\mu([\bj]))&=\prod_{k=1}^{n}\lambda_{\bi_k}^{s_{\bi_1\dots\bi_{k-1}}}\left(1-(1-p_{\bi_1\dots\bi_k})^{m(\bi_1\dots\bi_{k-1})}\right)\\
		    &\hspace{10em}\cdot\lambda_{\bj'}^{s_{\bi_1\dots\bi_n}}\sum_{|\hbar|=n(\bi_1\dots\bi_n)-|\bj|}\lambda_{\hbar}^{s_{\bi_1\dots\bi_n}}\left(1-(1-p_{\bj\hbar})^{m(\bi)}\right).
  \end{align*}
  Choose $K\geq1$ sufficiently large such that \cref{thm:uniforms} and \cref{lem:lemma6} hold.
  Then by \cref{eq:repeat}, \cref{eq:mi} and \cref{lem:lemma6}
  \begin{align*}
    \bbE(\mu([\bj]))&\leq
    10\lambda_{\bi_1}^{s_\emptyset}\cdots\lambda_{\bi_n}^{s_{\bi_1\dots\bi_{n-1}}}p_{\bi_1\dots\bi_n}e^{\gamma|\bi_1\dots\bi_n|}\prod_{k=0}^{n-1}\frac{2p_{\bi_1\dots\bi_k}e^{\gamma|\bi_1\dots\bi_k|}}{n(\bi_1\dots\bi_k)}\lambda_{\bj'}^{s_{\bi_1\dots\bi_n}}p_{\bj'}e^{\gamma|\bj'|}\\
		    &\leq 10\lambda_{\min}^{-\epsilon_0|\bj|}\bbQ_1([\bj]),
  \end{align*}
  where in the last inequality we have used \cref{it:condni4} and \cref{thm:uniforms}.
\end{proof}

\begin{lemma}\label{lem:remainder}
  There exists a $K\geq1$ such that for every constructor function with $n(\emptyset)\geq K$ we
  have that for every $\bi,\bj\in\Sigma_*$ with $|\bj|\leq n(\bi)-|\bi|$
  $$
  \bbE\left(\left(\sum_{|\hbar|=n(\bi)-|\bi|-|\bj|}\lambda_{\hbar}^{s_{\bi}}\mathbb
  1_{\bi\bj\hbar}^{\bi}X_{\bi\bj\hbar}\right)^2\right)\leq
  10Cp_{\bj}e^{\gamma|\bj|}+100p_{\bj}^2e^{2\gamma|\bj|},
  $$
  where $C>0$ is the universal constant such that $\bbE(X_{\bi}^2)\leq C$ for all $\bi\in\Sigma_*$.
\end{lemma}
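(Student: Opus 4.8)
The plan is to follow the method of the proof of \cref{thm:L2}: expand the square, separate the diagonal from the off-diagonal terms, and estimate each using the independence built into the construction together with \cref{lem:lemma6}. Writing $r=n(\bi)-|\bi|-|\bj|$ (so $r\ge0$ by hypothesis), expand
\[
  \bbE\Bigg(\Bigg(\sum_{|\hbar|=r}\lambda_\hbar^{s_\bi}\mathbb 1^\bi_{\bi\bj\hbar}X_{\bi\bj\hbar}\Bigg)^2\Bigg)
  =\underbrace{\sum_{|\hbar|=r}\lambda_\hbar^{2s_\bi}\bbE\big((\mathbb 1^\bi_{\bi\bj\hbar})^2X_{\bi\bj\hbar}^2\big)}_{=:S_1}
  +\underbrace{\sum_{\substack{\hbar_1\neq\hbar_2\\|\hbar_k|=r}}\lambda_{\hbar_1}^{s_\bi}\lambda_{\hbar_2}^{s_\bi}\bbE\big(\mathbb 1^\bi_{\bi\bj\hbar_1}\mathbb 1^\bi_{\bi\bj\hbar_2}X_{\bi\bj\hbar_1}X_{\bi\bj\hbar_2}\big)}_{=:S_2}.
\]

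For $S_1$: the word $\bi\bj\hbar$ has length $n(\bi)$, so it is a level-one child of $\bi$, the indicator $\mathbb 1^\bi_{\bi\bj\hbar}$ reads the digits of $\bo$ in the (pairwise disjoint) windows indexed by $P(n(\bi))$, and $X_{\bi\bj\hbar}$ is determined by the constructor values of the descendants of $\bi\bj\hbar$, all strictly larger than $n(\bi)$ by \cref{it:condni3}; exactly as in \cref{thm:martingale,thm:L2}, $\mathbb 1^\bi_{\bi\bj\hbar}$ and $X_{\bi\bj\hbar}$ are independent. Using $(\mathbb 1^\bi_{\bi\bj\hbar})^2=\mathbb 1^\bi_{\bi\bj\hbar}$, $\bbE(\mathbb 1^\bi_{\bi\bj\hbar})=1-(1-p_{\bi\bj\hbar})^{m(\bi)}$, the bound $\bbE(X_{\bi\bj\hbar}^2)\le C$ from \cref{thm:summability}, and $\lambda_\hbar^{2s_\bi}\le\lambda_\hbar^{s_\bi}$ (as $\lambda_\hbar\le1$ and $s_\bi\ge0$), followed by \cref{lem:lemma6} (applicable since $|\bj|\le n(\bi)-|\bi|$), we get
\[
  S_1\le C\sum_{|\hbar|=n(\bi)-|\bi|-|\bj|}\lambda_\hbar^{s_\bi}\big(1-(1-p_{\bi\bj\hbar})^{m(\bi)}\big)\le 10C\,p_\bj e^{\gamma|\bj|}.
\]

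For $S_2$: when $\hbar_1\neq\hbar_2$ the words $\bi\bj\hbar_1$ and $\bi\bj\hbar_2$ are distinct children of $\bi$, so by \cref{it:condni3} (applied to their descendants) the pair $X_{\bi\bj\hbar_1},X_{\bi\bj\hbar_2}$ is independent, both variables are independent of $\mathbb 1^\bi_{\bi\bj\hbar_1}\mathbb 1^\bi_{\bi\bj\hbar_2}$, and $\bbE(X_{\bk})=1$, so the expectation factors to $\bbE(\mathbb 1^\bi_{\bi\bj\hbar_1}\mathbb 1^\bi_{\bi\bj\hbar_2})$. Since the $m(\bi)$ windows of $P(n(\bi))$ are i.i.d.\ blocks of length $n(\bi)$, each matching $\bi\bj\hbar_1$ with probability $p_{\bi\bj\hbar_1}$ and $\bi\bj\hbar_2$ with probability $p_{\bi\bj\hbar_2}$, inclusion--exclusion gives $\bbE(\mathbb 1^\bi_{\bi\bj\hbar_1}\mathbb 1^\bi_{\bi\bj\hbar_2})=1-(1-p_{\bi\bj\hbar_1})^{m(\bi)}-(1-p_{\bi\bj\hbar_2})^{m(\bi)}+(1-p_{\bi\bj\hbar_1}-p_{\bi\bj\hbar_2})^{m(\bi)}$, which by the positive-correlation estimate \cref{eq:poscor} is at most $\prod_{k=1,2}\big(1-(1-p_{\bi\bj\hbar_k})^{m(\bi)}\big)$. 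Dropping the restriction $\hbar_1\neq\hbar_2$ and applying \cref{lem:lemma6},
\[
  S_2\le\Bigg(\sum_{|\hbar|=n(\bi)-|\bi|-|\bj|}\lambda_\hbar^{s_\bi}\big(1-(1-p_{\bi\bj\hbar})^{m(\bi)}\big)\Bigg)^2\le 100\,p_\bj^2 e^{2\gamma|\bj|}.
\]
Adding $S_1$ and $S_2$ yields the claimed inequality, valid for every constructor function with $n(\emptyset)$ at least the largest of the thresholds furnished by \cref{thm:summability} and \cref{lem:lemma6}.

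The only genuinely delicate point is the independence bookkeeping --- that each $X_{\bi\bj\hbar}$ is independent of the coarse-scale indicators $\mathbb 1^\bi_{\cdot}$ and of the sibling variables $X_{\bi\bj\hbar'}$ --- which rests on the disjointness of the sets $\ell^{-1}(n)$ (hence of the window families $P(n)$) across distinct scales $n$, combined with the fact that for large $n(\emptyset)$ the windows $P(n(\bi))$ are exhausted before the first window at any larger scale begins. This is precisely the mechanism already invoked in \cref{thm:martingale,thm:L2}, so I would cite it there rather than re-derive it; the rest is the elementary algebra above.
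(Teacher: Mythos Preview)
Your proof is correct and follows essentially the same approach as the paper: expand the square into diagonal and off-diagonal sums, use the independence of the indicators $\mathbb 1^\bi_{\bi\bj\hbar}$ from the limits $X_{\bi\bj\hbar}$ (and of the $X$'s from one another) to factor the expectations, apply \cref{eq:poscor} to the cross terms, and bound both pieces with \cref{lem:lemma6}. Your write-up is in fact slightly more explicit than the paper's about why the independence holds (via the disjointness of the window families $P(n)$ across distinct constructor values), which is a welcome clarification.
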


\begin{proof}
  Let $\bi,\bj\in\Sigma_*$ be fixed such that $|\bj|\leq n(\bi)-|\bi|$. Then
  \begin{align*}
		&\bbE\left(\left(\sum_{|\hbar|=n(\bi)-|\bi|-|\bj|}\lambda_{\hbar}^{s_{\bi}}\mathbb
		  1_{\bi\bj\hbar}^{\bi}X_{\bi\bj\hbar}\right)^2\right)=\sum_{\substack{|\hbar_1|=n(\bi)-|\bi|-|\bj|\\|\hbar_2|=n(\bi)-|\bi|-|\bj|}}\bbE\left(\lambda_{\hbar_1}^{s_{\bi}}\lambda_{\hbar_2}^{s_{\bi}}\mathbb
		  1_{\bi\bj\hbar_1}^{\bi}\mathbb
		1_{\bi\bj\hbar_2}^{\bi}X_{\bi\bj\hbar_1}X_{\bi\bj\hbar_2}\right)
		\intertext{by the independence of the indicators and $X_{\bi\bj\hbar}$'s}
		&=\sum_{|\hbar|=n(\bi)-|\bi|-|\bj|}\lambda_{\hbar}^{2s_{\bi}}\bbE(\mathbb
		1_{\bi\bj\hbar}^{\bi})\bbE(X_{\bi\bj\hbar}^2)+\sum_{\substack{|\hbar_1|=n(\bi)-|\bi|-|\bj|\\|\hbar_2|=n(\bi)-|\bi|-|\bj|\\\hbar_1\neq\hbar_2}}\lambda_{\hbar_1}^{s_{\bi}}\lambda_{\hbar_2}^{s_{\bi}}\bbE\left(\mathbb
		  1_{\bi\bj\hbar_1}^{\bi}\mathbb
		1_{\bi\bj\hbar_2}^{\bi}\right)\bbE(X_{\bi\bj\hbar_1})\bbE(X_{\bi\bj\hbar_1})
		\intertext{by \cref{thm:summability}}
		&\leq C\sum_{|\hbar|=n(\bi)-|\bi|-|\bj|}\lambda_{\hbar}^{2s_{\bi}}(1-(1-p_{\bi\bj\hbar})^{m(\bi)})\\
		&\qquad+\sum_{\substack{|\hbar_1|=n(\bi)-|\bi|-|\bj|\\|\hbar_2|=n(\bi)-|\bi|-|\bj|\\\hbar_1\neq\hbar_2}}\lambda_{\hbar_1}^{s_{\bi}}\lambda_{\hbar_2}^{s_{\bi}}(1-(1-p_{\bi\bj\hbar_1})^{m(\bi)}-(1-p_{\bi\bj\hbar_2})^{m(\bi)}+(1-p_{\bi\bj\hbar_1}-p_{\bi\bj\hbar_2})^{m(\bi)})
		\intertext{by \cref{eq:poscor}}
		&\leq C\sum_{|\hbar|=n(\bi)-|\bi|-|\bj|}\lambda_{\hbar}^{2s_{\bi}}(1-(1-p_{\bi\bj\hbar})^{m(\bi)})
		\\
		&\qquad\qquad\qquad+\sum_{\substack{|\hbar_1|=n(\bi)-|\bi|-|\bj|\\|\hbar_2|=n(\bi)-|\bi|-|\bj|\\\hbar_1\neq\hbar_2}}\lambda_{\hbar_1}^{s_{\bi}}\lambda_{\hbar_2}^{s_{\bi}}(1-(1-p_{\bi\bj\hbar_1})^{m(\bi)})(1-(1-p_{\bi\bj\hbar_1})^{m(\bi)})\\
		&\leq
		C\sum_{|\hbar|=n(\bi)-|\bi|-|\bj|}\lambda_{\hbar}^{2s_{\bi}}(1-(1-p_{\bi\bj\hbar})^{m(\bi)})+\left(\sum_{|\hbar|=n(\bi)-|\bi|-|\bj|}\lambda_{\hbar}^{s_{\bi}}(1-(1-p_{\bi\bj\hbar})^{m(\bi)})\right)^2
		\intertext{by \cref{lem:lemma6}}
		&\leq C10p_{\bj}e^{\gamma|\bj|}+\left(10p_{\bj}e^{\gamma|\bj|}\right)^2
  \end{align*}
  for every constructor function with sufficiently large $n(\emptyset)$.
\end{proof}

\begin{proposition}\label{thm:rightsubset}
  For every $\delta>0$ there exists a $K\geq1$ such that for every constructor function with
  $n(\emptyset)\geq K$ we have
  \[
    \mu_{\bo}\left( \bigcap_{k=0}^\infty \bigcup_{n=k}^\infty
      \bigcup_{\ell=\lceil\delta n\rceil}^\infty \{\bj\in
    \Sigma \mid p_{(\sigma^n\bj)|_{\ell}}e^{\gamma \ell}\ge (1-\epsilon_\gamma)^{\ell} \} \right)
    =0\quad\text{ for $\bbP$-almost every $\bo$},
  \]
  where $\epsilon_\gamma>0$ is defined in \cref{eq:Cramer}.
\end{proposition}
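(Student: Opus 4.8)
The plan is to run a first-moment (Borel--Cantelli) argument for the random measure $\mu_{\bo}$. Set
\[
  B_{n,\ell}:=\bigl\{\bj\in\Sigma\mid p_{(\sigma^n\bj)|_{\ell}}e^{\gamma\ell}\ge(1-\epsilon_\gamma)^{\ell}\bigr\}
  \qquad\text{and}\qquad D_n:=\bigcup_{\ell=\lceil\delta n\rceil}^\infty B_{n,\ell},
\]
so the set in question is $\limsup_{n\to\infty}D_n$. Since $\mu_{\bo}$ is $\bbP$-almost surely a finite measure (recall $\mu_{\bo}(\Sigma)=X_\emptyset$ with $\bbE(X_\emptyset)=1$, by \cref{thm:summability}), continuity from above gives $\mu_{\bo}(\limsup_n D_n)=0$ as soon as $\sum_n\mu_{\bo}(D_n)<\infty$; by Tonelli it therefore suffices to establish
\[
  \sum_{n=1}^\infty\sum_{\ell=\lceil\delta n\rceil}^\infty\bbE\bigl(\mu_{\bo}(B_{n,\ell})\bigr)<\infty.
\]

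For the key estimate I would observe that $B_{n,\ell}$ depends only on the coordinates $n+1,\dots,n+\ell$, so it is a finite union of cylinders of length $n+\ell$; applying \cref{lem:lemma7} (with a parameter $\epsilon_0>0$ to be fixed below) to each such cylinder and summing gives, for every constructor function with $n(\emptyset)\ge K(\epsilon_0)$,
\[
  \bbE\bigl(\mu_{\bo}(B_{n,\ell})\bigr)\le C\lambda_{\min}^{-\epsilon_0(n+\ell)}\,\bbQ_1(B_{n,\ell}).
\]
Because $\bbQ_1$ is a Bernoulli (product) measure, $\bbQ_1(B_{n,\ell})$ equals the ``unshifted'' probability $\bbQ_1\bigl(\{\bj\in\Sigma_\ell:p_{\bj}e^{\gamma\ell}\ge(1-\epsilon_\gamma)^{\ell}\}\bigr)$, and this is at most $e^{-\beta\ell}$ by Cram\'er's bound \cref{eq:Cramer} (the choice of $\epsilon_\gamma$ guarantees $\log(1-\epsilon_\gamma)$ lies strictly above the $\bbQ_1$-mean of $\gamma+\log p_{j_1}$; the mild discrepancy between the weak inequality here and the strict one in \cref{eq:Cramer} is harmless, as Cram\'er's upper bound applies equally to the closed half-line). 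Hence $\bbE(\mu_{\bo}(B_{n,\ell}))\le C\,\lambda_{\min}^{-\epsilon_0 n}\bigl(\lambda_{\min}^{-\epsilon_0}e^{-\beta}\bigr)^{\ell}$.

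It then remains to choose $\epsilon_0$. Since $\beta=\beta_\gamma$ and the constant $C$ do not depend on the constructor function, I would first pick $\epsilon_0>0$ small enough that $\epsilon_0<\delta\beta/\bigl((1+\delta)\abs{\log\lambda_{\min}}\bigr)$, and only afterwards invoke \cref{lem:lemma7} to obtain the threshold $K$. Writing $\rho:=\lambda_{\min}^{-\epsilon_0}e^{-\beta}<1$ and summing the geometric series in $\ell$ and then in $n$,
\[
  \sum_{n=1}^\infty\sum_{\ell=\lceil\delta n\rceil}^\infty\bbE\bigl(\mu_{\bo}(B_{n,\ell})\bigr)
  \le\frac{C}{1-\rho}\sum_{n=1}^\infty\lambda_{\min}^{-\epsilon_0 n}\rho^{\delta n}
  =\frac{C}{1-\rho}\sum_{n=1}^\infty\bigl(\lambda_{\min}^{-\epsilon_0(1+\delta)}e^{-\delta\beta}\bigr)^{n}<\infty,
\]
the convergence being exactly what the choice of $\epsilon_0$ secures; the proposition follows. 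There is no substantive obstacle here --- all the real work has been front-loaded into \cref{lem:lemma7} and the Cram\'er estimate \cref{eq:Cramer} --- and the one point requiring care is this order of quantifiers: $\epsilon_0$ must be fixed in terms of $\delta$, $\beta_\gamma$, $\lambda_{\min}$ \emph{before} $K$ is chosen, which is legitimate precisely because none of these quantities depends on the constructor function, while the presence of the constraint $\ell\ge\lceil\delta n\rceil$ is essential, as it supplies the extra decay $\rho^{\delta n}$ needed to beat the factor $\lambda_{\min}^{-\epsilon_0 n}$.
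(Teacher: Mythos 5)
Your argument is correct and is essentially identical to the paper's proof: the same first-moment Borel--Cantelli reduction, the same application of \cref{lem:lemma7} to the length-$(n+\ell)$ cylinders making up each $B_{n,\ell}$, the same use of the product structure of $\bbQ_1$ together with the Cram\'er bound \cref{eq:Cramer}, and the same choice of $\epsilon_0$ (your inequality $\epsilon_0<\delta\beta/((1+\delta)\abs{\log\lambda_{\min}})$ is exactly the paper's condition $\lambda_{\min}^{-\epsilon_0(1+\delta)}e^{-\beta\delta}<1$). Your explicit remarks on the order of quantifiers and on the weak-versus-strict inequality in the Cram\'er estimate are sound and, if anything, slightly more careful than the paper's exposition.
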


\begin{proof}
  Let $\delta>0$ be arbitrary but fixed. By Borel-Cantelli's Lemma, it is enough to show that
  \begin{equation*}
    \bbE\left(\sum_{n=1}^\infty\sum_{\ell=\lceil\delta n\rceil}^\infty \mu\left(\{\bj\in \Sigma
	  \mid p_{(\sigma^n\bj)|_{\ell}}e^{\gamma \ell}\ge (1-\epsilon_\gamma)^{\ell}
    \}\right)\right)<\infty.
  \end{equation*}
  Clearly,
  \[
    \mu\left(\{\bj\in \Sigma \mid p_{(\sigma^n\bj)|_{\ell}}e^{\gamma \ell}\ge
    (1-\epsilon_\gamma)^{\ell} \}\right)=
    \sum_{|\bi|=n}\sum_{|\bj|=\ell}\mu([\bi\bj])\mathbb{1}\{p_{\bj}
    e^{\gamma \ell}\ge (1-\epsilon_\gamma)^{\ell}\}.
  \]
  Let $\epsilon_0>0$ be such that $\lambda_{\min}^{-\epsilon_0(1+\delta)}e^{-\beta\delta}<1$,
  where $\beta$ is defined in \cref{eq:Cramer}. Note that
  $\lambda_{\min}^{-\epsilon_0}e^{-\beta}<1$. Let $K\geq1$ be such that \cref{lem:lemma7} holds
  with $\epsilon_0$. Hence, for every $n\geq N$
  \begin{align*}
    \bbE\left(\mu\left(\{\bj\in \Sigma \mid p_{(\sigma^n\bj)|_{\ell}}e^{\gamma \ell}\ge
	  (1-\epsilon_\gamma)^{\ell}
    \}\right)\right)
      &=\sum_{|\bi|=n}\sum_{|\bj|=\ell}\bbE(\mu([\bi\bj]))\mathbb{1}\{p_{\bj}e^{\gamma
      \ell}\ge (1-\epsilon_\gamma)^{\ell}\}\\
      &\leq
      \lambda_{\min}^{-\epsilon_0(n+\ell)}\sum_{|\bi|=n}\sum_{|\bj|=\ell}\bbQ_1([\bi\bj])\mathbb{1}\{p_{\bj}e^{\gamma
	\ell}\ge
      (1-\epsilon_\gamma)^{\ell}\}\\
      &=\lambda_{\min}^{-\epsilon_0(n+\ell)}\sum_{|\bj|=\ell}\bbQ_1([\bj])\mathbb{1}\{p_{\bj}e^{\gamma \ell}\ge (1-\epsilon_\gamma)^{\ell}\}\\
      &=\lambda_{\min}^{-\epsilon_0(n+\ell)}\bbQ_1\left(\{\bj\in \Sigma \mid p_{\bj|_{\ell}}e^{\gamma \ell}\ge (1-\epsilon_\gamma)^{\ell}\}\right)\\
      &\leq \lambda_{\min}^{-\epsilon_0(n+\ell)}e^{-\beta\ell}.
  \end{align*}
  Thus,
  \begin{align*}
    \sum_{\ell=\lceil\delta n\rceil}^\infty\bbE\left(\mu\left(\{\bj\in 
	  \Sigma \mid p_{(\sigma^n\bj)|_{\ell}}e^{\gamma \ell}\ge 
    (1-\epsilon_\gamma)^{\ell} \}\right)\right)\leq\sum_{\ell
  =\lceil\delta n\rceil}^\infty\lambda_{\min}^{-\epsilon_0(n+\ell)}
  e^{-\beta\ell}\leq\frac{\lambda_{\min}^{-\epsilon_0(1+\delta)n}
  e^{-\beta\delta n}}{1-\lambda_{\min}^{-\epsilon_0}e^{-\beta}},
\end{align*}
which forms a convergent series.
\end{proof}

For $\delta>0$, let
\[
  E_k^\delta:=\bigcap_{n=k}^\infty\bigcap_{\ell=\lceil n\delta\rceil}^\infty \{ \bj\in \Sigma\mid
  p_{(\sigma^n\bj)|_{\ell}}e^{\gamma \ell} < (1-\epsilon_\gamma )^{\ell}  \}
\]
It is an immediate consequence of \cref{thm:rightsubset}, $0<\bbE(\mu(\Sigma))<\infty$ and the
monotone convergence theorem that for a well-chosen constructor function
\[
  \bbE(\mu(E_k^\delta))>0
\]
for all large enough $k$. 

\begin{proposition}\label{thm:forfrostman}
  For all $\epsilon_0>0$ there exists a $\delta>0$ and a $K\geq1$ such that for every constructor
  function with $n(\emptyset)\geq K$ and every $q\in\bbN$:
  \begin{equation*}
    \sum_{\bj\in\Sigma_*}\lambda_{\bj}^{-s(\gamma)+3\epsilon_0}\bbE(\mu([\bj])^2)\mathbb{1}\{[\bj]\cap
    E_q^\delta\neq\emptyset\}<\infty.
  \end{equation*}

\end{proposition}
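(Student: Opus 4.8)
The plan is to estimate $\bbE(\mu([\bj])^2)$ for each $\bj\in\Sigma_*$ and to show that, when we restrict to those $\bj$ with $[\bj]\cap E_q^\delta\neq\emptyset$, the resulting series converges. The starting point is the decomposition \cref{eq:muj} of $\mu([\bj])$: writing $\bj=\bi_1\dots\bi_n\bj'$ with $\bi_1\dots\bi_k\in\Xi_k^\emptyset$ for all $k$, we have
\[
  \mu([\bj])=\lambda_{\bi_1}^{s_\emptyset}\cdots\lambda_{\bi_n}^{s_{\bi_1\dots\bi_{n-1}}}\,\mathbb 1_{\bi_1}^{\emptyset}\cdots\mathbb 1_{\bi_1\dots\bi_n}^{\bi_1\dots\bi_{n-1}}\,\lambda_{\bj'}^{s_{\bi_1\dots\bi_n}}\sum_{|\hbar|=n(\bi_1\dots\bi_n)-|\bj|}\lambda_{\hbar}^{s_{\bi_1\dots\bi_n}}\mathbb 1_{\bj\hbar}^{\bi_1\dots\bi_n}X_{\bj\hbar}.
\]
Squaring and taking expectations, the prefactor indicators are independent of the inner sum (they involve disjoint construction levels, since $n(\bi_1\dots\bi_k)\neq n(\bi_1\dots\bi_n)$), so the prefactor contributes a product $\prod_{k=1}^n\lambda_{\bi_k}^{2s_{\bi_1\dots\bi_{k-1}}}(1-(1-p_{\bi_1\dots\bi_k})^{m(\bi_1\dots\bi_{k-1})})$, and the inner square is controlled by \cref{lem:remainder}, giving
\[
  \bbE(\mu([\bj])^2)\leq \prod_{k=1}^n\lambda_{\bi_k}^{2s_{\bi_1\dots\bi_{k-1}}}(1-(1-p_{\bi_1\dots\bi_k})^{m(\bi_1\dots\bi_{k-1})})\cdot \lambda_{\bj'}^{2s_{\bi_1\dots\bi_n}}\left(10Cp_{\bj'}e^{\gamma|\bj'|}+100p_{\bj'}^2e^{2\gamma|\bj'|}\right).
\]

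Next I would simplify the prefactor using \cref{eq:repeat} and \cref{eq:mi} exactly as in the proof of \cref{lem:lemma7}: each factor $1-(1-p_{\bi_1\dots\bi_k})^{m(\bi_1\dots\bi_{k-1})}\leq m(\bi_1\dots\bi_{k-1})p_{\bi_1\dots\bi_k}\leq \tfrac{2p_{\bi_1\dots\bi_{k-1}}e^{\gamma|\bi_1\dots\bi_{k-1}|}}{n(\bi_1\dots\bi_{k-1})}p_{\bi_k}e^{\gamma|\bi_k|}$, and then \cref{it:condni4} lets each ratio $\tfrac{p_{\bi_1\dots\bi_{k-1}}e^{\gamma|\bi_1\dots\bi_{k-1}|}}{n(\bi_1\dots\bi_{k-1})}$ be made arbitrarily small; by \cref{thm:uniforms} each $s_{\bi_1\dots\bi_{k-1}}\geq s(\gamma)-\epsilon_0$. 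This yields a bound of the form $\bbE(\mu([\bj])^2)\leq C'\,\lambda_{\min}^{-2\epsilon_0|\bj|}\,\lambda_{\bj}^{s(\gamma)}\,p_{\bj}e^{\gamma|\bj|}\cdot(\text{small})^{n}$ — crucially one power of $\lambda_{\bj}^{s(\gamma)}$ is consumed by the square of $\mu$ but the other is retained, and after using $p_{\bj}e^{\gamma|\bj|}=\bbQ_1([\bj])/\lambda_{\bj}^{s(\gamma)}$ we get $\bbE(\mu([\bj])^2)\leq C'\lambda_{\min}^{-2\epsilon_0|\bj|}\lambda_{\bj}^{s(\gamma)}\bbQ_1([\bj])$, up to a geometrically small factor in the number of construction levels $n$ crossed by $\bj$.

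Plugging this into the series, $\sum_{\bj}\lambda_{\bj}^{-s(\gamma)+3\epsilon_0}\bbE(\mu([\bj])^2)\mathbb 1\{[\bj]\cap E_q^\delta\neq\emptyset\}$ is bounded by a constant times $\sum_{\bj}\lambda_{\min}^{\epsilon_0|\bj|}\,\bbQ_1([\bj])\,\mathbb 1\{[\bj]\cap E_q^\delta\neq\emptyset\}$, where I have used $\lambda_{\bj}^{-s(\gamma)+3\epsilon_0}\lambda_{\bj}^{s(\gamma)}=\lambda_{\bj}^{3\epsilon_0}\leq\lambda_{\min}^{3\epsilon_0|\bj|}$ and absorbed the $\lambda_{\min}^{-2\epsilon_0|\bj|}$. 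The key point is that the restriction to $E_q^\delta$ forces, for every $\bj$ of length $\ell\geq\lceil n\delta\rceil$ sitting above a point of $E_q^\delta$ and crossing index $n\geq q$, the Cramér-type bound $p_{(\sigma^n\bj)|_\ell}e^{\gamma\ell}<(1-\epsilon_\gamma)^\ell$; summing $\bbQ_1$ over such words and invoking \cref{eq:Cramer} (or rather its consequence bounding the $\bbQ_1$-mass of the ``bad'' complement set, as in \cref{thm:rightsubset}) produces an extra exponentially small factor $e^{-\beta'\ell}$ for a suitable $\beta'>0$ depending on $\epsilon_\gamma$, which beats the $\lambda_{\min}^{-\epsilon_0}$ losses once $\epsilon_0$ is chosen small and $\delta$ small. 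Organizing the sum by the level $n$ where $\bj$ last aligns with a $\Xi_n^\emptyset$ word and by the remaining length $|\bj'|$, and summing the resulting geometric series in both parameters, gives finiteness.

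\textbf{Main obstacle.} The delicate point is bookkeeping the interaction between the very sparse tree structure (so that $n$, the number of construction levels crossed by $\bj$, grows only logarithmically, or worse, relative to $|\bj|$) and the Cramér bound, which wants to act on the \emph{tail} suffix $(\sigma^n\bj)|_\ell$ rather than on $\bj$ itself. One must be careful that the event $[\bj]\cap E_q^\delta\neq\emptyset$ really does impose the smallness of $p_{\bj'}e^{\gamma|\bj'|}$ on the relevant suffixes at \emph{all} scales $\ell\geq\lceil n\delta\rceil$ simultaneously, and that the constant $C$ from \cref{lem:remainder} and the geometric factor from the prefactor do not secretly depend on $\bj$ in a way that destroys summability. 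Choosing $\epsilon_0$ small enough that $\lambda_{\min}^{-\epsilon_0}e^{-\beta}<1$ (as already done in \cref{thm:rightsubset}) and then $\delta$ small enough that the level-count losses are controlled is what makes the argument close; verifying that these choices are mutually compatible and uniform over constructor functions with $n(\emptyset)\geq K$ is the crux.
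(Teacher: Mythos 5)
Your setup matches the paper's proof: the same decomposition of $\bj=\bi_1\dots\bi_n\bj'$ via \cref{eq:muj}, the same independence argument for the prefactor indicators, and the same appeal to \cref{lem:remainder} for the inner sum. The gap is in what you do with the two terms that \cref{lem:remainder} produces. Your claimed intermediate bound $\bbE(\mu([\bj])^2)\leq C'\lambda_{\min}^{-2\epsilon_0|\bj|}\lambda_{\bj}^{s(\gamma)}\bbQ_1([\bj])$ silently absorbs the quadratic term $100\,p_{\bj'}^2e^{2\gamma|\bj'|}$ into the linear one, which is only legitimate when $p_{\bj'}e^{\gamma|\bj'|}\le 1$. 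Since $p_{\max}e^{\gamma}>1$ is perfectly possible in the regime $\gamma<\alpha\le\alpha_0$, the quadratic term can be exponentially larger than the linear one, and this is precisely the term for which the restriction to $E_q^\delta$ is needed at all: the linear contribution $10C\lambda_{\bj}^{s(\gamma)-2\epsilon_0}\bbQ_1([\bj])$ already sums against $\lambda_{\bj}^{-s(\gamma)+3\epsilon_0}$ to $\sum_{\bj}\lambda_{\bj}^{\epsilon_0}\bbQ_1([\bj])<\infty$ with no use of $E_q^\delta$ whatsoever. Relatedly, you invoke the Cramér estimate \cref{eq:Cramer} on the $\bbQ_1$-mass of bad words; in this proposition the use of $E_q^\delta$ is deterministic, not probabilistic — if $[\bi\hbar]\cap E_q^\delta\neq\emptyset$ with $|\bi|\ge q$ and $|\hbar|\ge\delta|\bi|$, the definition of $E_q^\delta$ directly forces $p_{\hbar}e^{\gamma|\hbar|}<(1-\epsilon_\gamma)^{|\hbar|}$, which converts $p_{\hbar}^2e^{2\gamma|\hbar|}$ into $p_{\hbar}e^{\gamma|\hbar|}(1-\epsilon_\gamma)^{|\hbar|}$ and restores geometric decay. (The probabilistic Cramér bound is what \cref{thm:rightsubset} uses; here it is not needed.)

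The piece you flag as "the crux" but do not resolve is exactly the remaining case: suffixes $\bj'$ with $0\le|\bj'|<\delta|\bi_1\dots\bi_n|$, where $E_q^\delta$ gives no information on $p_{\bj'}e^{\gamma|\bj'|}$. The paper handles this by brute force: setting $A=\max\{1,\sum_{j}\lambda_j^{s(\gamma)}p_j^2e^{2\gamma}\}$, the sum of the quadratic term over all such short suffixes is at most of order $|\bi|\,A^{\delta|\bi|}$, and one chooses $\delta$ so small that $\lambda_{\max}^{\epsilon_0/2}A^{\delta}<1$, so that the surviving factor $\lambda_{\bi}^{\epsilon_0}$ from the prefactor kills $A^{\delta|\bi|}$. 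This is the actual reason $\delta$ must depend on $\epsilon_0$ in the statement — not "level-count losses". Without this step (or an equivalent treatment of the quadratic term on short suffixes) the series has not been shown to converge.
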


\begin{proof}
  Let $\bj\in\Sigma_*$ be arbitrary but fixed. Similarly to the proof of \cref{lem:lemma7}, there
  exists $\bi_1,\ldots,\bi_n,\bj'\in\Sigma_*$ such that $\bj=\bi_1\dots\bi_n\bj'$,
  $\bi_1\dots\bi_k\in\Xi_k^\emptyset$ for every $1\leq k\leq n$, and
  $\bj\hbar\in\Xi_{n+1}^\emptyset$ for every $\hbar\in\Sigma_*$ with
  $|\hbar|=n(\bi_1\dots\bi_n)-|\bj|$. By \cref{eq:muj}
  $$
  \mu([\bj])^2=\prod_{k=1}^n\lambda_{\bi_k}^{2s_{\bi_1\dots\bi_{k-1}}}\mathbb
  1_{\bi_{1}\dots \bi_k}^{\bi_1\dots
    \bi_{k-1}}\lambda_{\bj'}^{2s_{\bi_1\dots\bi_n}}\left(\sum_{|\hbar|=n(\bi_1\dots\bi_n)-|\bj|}\lambda_{\hbar}^{s_{\bi_1\dots\bi_n}}\mathbb
  1_{\bj\hbar}^{\bi_1\dots\bi_n}X_{\bj\hbar}\right)^2.
  $$
  By the independence of the indicators and $X_{\bj\hbar}$'s
  \begin{align*}
    \bbE(\mu([\bj])^2)&=\prod_{k=1}^{n}\lambda_{\bi_k}^{2s_{\bi_1\dots\bi_{k-1}}}\left(1-(1-p_{\bi_1\dots\bi_k})^{m(\bi_1\dots\bi_{k-1})}\right)\\
		      &\qquad\qquad\qquad\cdot\lambda_{\bj'}^{2s_{\bi_1\dots\bi_n}}\bbE\left(\left(\sum_{|\hbar|=n(\bi_1\dots\bi_n)-|\bj|}\lambda_{\hbar}^{s_{\bi_1\dots\bi_n}}\mathbb 1_{\bj\hbar}^{\bi_1\dots\bi_n}X_{\bj\hbar}\right)^2\right).
		      \intertext{By \cref{lem:remainder}}
		      &\leq\prod_{k=1}^{n}\lambda_{\bi_k}^{2s_{\bi_1\dots\bi_{k-1}}}\left(1-(1-p_{\bi_1\dots\bi_k})^{m(\bi_1\dots\bi_{k-1})}\right)\cdot\lambda_{\bj'}^{2s_{\bi_1\dots\bi_n}}\left(10Cp_{\bj'}e^{\gamma|\bj'|}+100p_{\bj'}^2e^{2\gamma|\bj'|}\right)
		      \intertext{By \cref{eq:repeat}, \cref{eq:mi} and \cref{it:condni4} and \cref{thm:uniforms}}
		      &\leq\prod_{k=1}^{n}\lambda_{\bi_k}^{2s_{\bi_1\dots\bi_{k-1}}}m(\bi_1\dots\bi_{k-1})p_{\bi_1\dots\bi_k}\cdot\lambda_{\bj'}^{2s_{\bi_1\dots\bi_n}}\left(10Cp_{\bj'}e^{\gamma|\bj'|}+100p_{\bj'}^2e^{2\gamma|\bj'|}\right)\\
		      &\leq\prod_{k=1}^{n}\lambda_{\bi_k}^{2s_{\bi_1\dots\bi_{k-1}}}\frac{2e^{\gamma n(\bi_1\dots\bi_{k-1})}p_{\bi_1\dots\bi_{k-1}}}{n(\bi\dots\bi_{k-1})}p_{\bi_k}\cdot\lambda_{\bj'}^{2s_{\bi_1\dots\bi_n}}\left(10Cp_{\bj'}e^{\gamma|\bj'|}+100p_{\bj'}^2e^{2\gamma|\bj'|}\right)\\
		      &\leq\prod_{k=1}^{n}\lambda_{\bi_k}^{2s_{\bi_1\dots\bi_{k-1}}}e^{\gamma|\bi_k|}p_{\bi_k}\cdot\lambda_{\bj'}^{2s_{\bi_1\dots\bi_n}}\left(10Cp_{\bj'}e^{\gamma|\bj'|}+100p_{\bj'}^2e^{2\gamma|\bj'|}\right)\\
		      &\leq10C\lambda_{\bj}^{s(\gamma)-2\epsilon_0}\bbQ_1([\bj])+100\lambda_{\bj}^{s(\gamma)-2\epsilon_0}\bbQ_1([\bi_1\dots\bi_n])\lambda_{\bj'}^{s(\gamma)}p_{\bj'}^2e^{2\gamma|\bj'|}.
  \end{align*}
  Let $A:=\max\{1,\sum_{j=1}^N\lambda_j^{s(\gamma)}p_j^2e^{2\gamma}\}$. Now, choose $\delta>0$ such that $\lambda_{\max}^{\epsilon_0/2}A^\delta<1$. By the construction of the sets $\Xi_n^\emptyset$,
  \begin{align*}
	&\sum_{\bj\in\Sigma_*}\lambda_{\bj}^{-s(\gamma)+3\epsilon_0}\bbE(\mu([\bj])^2)\mathbb{1}\{[\bj]\cap E_q^\delta\neq\emptyset\}\\
	&\qquad=\sum_{n=0}^\infty\sum_{\bi\in\Xi_n^\emptyset}\sum_{\ell=0}^{n(\bi)-|\bi|}\sum_{|\hbar|=\ell}\lambda_{\bi\hbar}^{-s(\gamma)+3\epsilon_0}\bbE(\mu([\bi\hbar])^2)\mathbb{1}\{[\bi\hbar]\cap E_q^\delta\neq\emptyset\}\\
	&\qquad\leq10C\sum_{\bj\in\Sigma_*}\lambda_{\bj}^{\epsilon_0}\bbQ_1([\bj])+100\sum_{n=0}^\infty\sum_{\bi\in\Xi_n^\emptyset}\sum_{\ell=0}^{n(\bi)-|\bi|}\sum_{|\hbar|=\ell}\lambda_{\bi\hbar}^{\epsilon_0}\bbQ_1([\bi])\lambda_{\hbar}^{s(\gamma)}p_{\hbar}^2e^{2\gamma|\hbar|}\mathbb{1}\{[\bi\hbar]\cap E_q^\delta\neq\emptyset\}.
  \end{align*}
  Since the first sum is clearly convergent, we only need to check the summability of the second part. For $n$ sufficiently large, $|\bi|\geq q$ for every $\bi\in\bigcup_{k=n}^\infty\Xi_k^\emptyset$ and $|\bi|\leq\lambda_{\max}^{-\epsilon_0/2|\bi|}$. So, for every sufficiently large $n$
  \begin{align*}
	&\sum_{\bi\in\Xi_n^\emptyset}\sum_{\ell=0}^{n(\bi)-|\bi|}\sum_{|\hbar|=\ell}\lambda_{\bi\hbar}^{\epsilon_0}\bbQ_1([\bi])\lambda_{\hbar}^{s(\gamma)}p_{\hbar}^2e^{2\gamma|\hbar|}\mathbb{1}\{[\bi\hbar]\cap E_q^\delta\neq\emptyset\}\\
	&\qquad=\sum_{\bi\in\Xi_n^\emptyset}\sum_{\ell=0}^{\lfloor\delta|\bi|\rfloor}\sum_{|\hbar|=\ell}\lambda_{\bi\hbar}^{\epsilon_0}\bbQ_1([\bi])\lambda_{\hbar}^{s(\gamma)}p_{\hbar}^2e^{2\gamma|\hbar|}\mathbb{1}\{[\bi\hbar]\cap E_q^\delta\neq\emptyset\}\\
	&\qquad\qquad\qquad\qquad\qquad\qquad+\sum_{\bi\in\Xi_n^\emptyset}\sum_{\ell=\lceil\delta|\bi|\rceil}^{n(\bi)-|\bi|}\sum_{|\hbar|=\ell}\lambda_{\bi\hbar}^{\epsilon_0}\bbQ_1([\bi])\lambda_{\hbar}^{s(\gamma)}p_{\hbar}^2e^{2\gamma|\hbar|}\mathbb{1}\{[\bi\hbar]\cap E_q^\delta\neq\emptyset\}\\
	&\qquad\leq\sum_{\bi\in\Xi_n^\emptyset}\lambda_{\bi}^{\epsilon_0}\bbQ_1([\bi])\sum_{\ell=0}^{\lfloor\delta|\bi|\rfloor}\left(\sum_{i=1}^N\lambda_{i}^{s(\gamma)}p_{i}^2e^{2\gamma}\right)^\ell+\sum_{\bi\in\Xi_n^\emptyset}\sum_{\ell=\lceil\delta|\bi|\rceil}^{n(\bi)-|\bi|}\sum_{|\hbar|=\ell}\lambda_{\bi\hbar}^{\epsilon_0}\bbQ_1([\bi])\lambda_{\hbar}^{s(\gamma)}p_{\hbar}e^{\gamma|\hbar|}(1-\epsilon_\gamma)^{|\hbar|}\\
	&\qquad\leq\sum_{\bi\in\Xi_n^\emptyset}\lambda_{\bi}^{\epsilon_0}\bbQ_1([\bi])\delta|\bi| A^{\delta|\bi|}+\sum_{\bi\in\Xi_n^\emptyset}\sum_{\ell=\lceil\delta|\bi|\rceil}^{n(\bi)-|\bi|}\lambda_{\bi}^{\epsilon_0}\bbQ_1([\bi])\lambda_{\max}^{\epsilon_0\ell}(1-\epsilon_\gamma)^{\ell}\\
	&\qquad\leq\sum_{\bi\in\Xi_n^\emptyset}\lambda_{\max}^{\epsilon_0/2|\bi|}\bbQ_1([\bi]) A^{\delta|\bi|}+\sum_{\bi\in\Xi_n^\emptyset}\lambda_{\max}^{\epsilon_0|\bi|}\bbQ_1([\bi])\left(1-\lambda_{\max}^{\epsilon_0}(1-\epsilon_\gamma)\right)^{-1}\\
	&\qquad\leq\lambda_{\max}^{\epsilon_0/2n}A^{\delta n}+\lambda_{\max}^{\epsilon_0n}\left(1-\lambda_{\max}^{\epsilon_0}(1-\epsilon_\gamma)\right)^{-1},
  \end{align*}
  which completes the proof.
\end{proof}

\subsection{Energy estimates of the random Cantor measure}\label{subsec:energy}

\begin{lemma}\label{thm:OSC}
  Let $\Phi=\{f_i(x)=\lambda_iO_ix+t_i\}_{i=1}^N$ be an IFS of similarities of $\bbR^d$ such that
  $\Phi$ satisfies the open set condition. For every Borel measure $\nu$ on
  $\Sigma=\{1,\ldots,N\}^\bbN$ with $0<\nu(\Sigma)<\infty$, if
  $\sum_{\bi\in\Sigma_*}\lambda_{\bi}^{-t}\nu([\bi])^2<\infty$ then $\dim_H\pi_*\nu\geq t$, where
  $\pi$ denotes the natural projection. 
\end{lemma}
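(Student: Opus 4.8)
\emph{The plan.} I would run the classical potential-theoretic (energy) method. Set $\mu:=\pi_*\nu$, a finite Borel measure on $\bbR^d$ with $0<\mu(\bbR^d)=\nu(\Sigma)<\infty$. The goal is to show that the $t$-energy $I_t(\mu):=\int_{\bbR^d}\int_{\bbR^d}|x-y|^{-t}\,d\mu(x)\,d\mu(y)$ is finite; then the classical energy criterion (Frostman's lemma) gives $\dimh\mu\ge t$, i.e.\ $\dimh\pi_*\nu\ge t$, as claimed. There is nothing to prove if $t\le0$ since Hausdorff dimension is always nonnegative, so assume $t>0$. Changing variables $x=\pi(\bi)$, $y=\pi(\bj)$ and using the layer-cake identity $|x-y|^{-t}=t\int_0^\infty r^{-t-1}\mathbb 1\{|x-y|<r\}\,dr$ (valid in $[0,\infty]$, which automatically accounts for coincidences $\pi\bi=\pi\bj$), Tonelli's theorem gives
\[
  I_t(\mu)=t\int_0^\infty r^{-t-1}\,(\nu\times\nu)\bigl\{(\bi,\bj)\in\Sigma\times\Sigma : |\pi\bi-\pi\bj|<r\bigr\}\,dr .
\]

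\emph{The core estimate.} The heart of the argument is to control the inner measure using the open set condition. For $r>0$ put $\Sigma(r):=\{\bk\in\Sigma_*\setminus\{\emptyset\} : \lambda_\bk\le r<\lambda_{\bk^-}\}$, where $\bk^-$ denotes $\bk$ with its last letter deleted; since $\lambda_{\bk|_j}$ is decreasing in $j$, this is a finite maximal antichain, so $\Sigma=\bigsqcup_{\bk\in\Sigma(r)}[\bk]$, and $\diam\pi[\bk]=\lambda_\bk\diam(\Lambda)\le r\diam(\Lambda)$ for $\bk\in\Sigma(r)$ because $f_\bk$ is a similarity of ratio $\lambda_\bk$. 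I would then invoke the standard consequence of the OSC (Hutchinson \cite{Hutchinson1981}; see also Bandt--Graf \cite{BandtGraf1992}, Schief \cite{Schief1994}): for every $a>0$ there is $M_a<\infty$, depending only on $\Phi$ and $a$, such that every ball of radius $ar$ meets at most $M_a$ of the cylinder images $\{\pi[\bk]:\bk\in\Sigma(r)\}$; crucially $M_a$ is uniform in $r$. Now decompose $\bi\in[\bk]$, $\bj\in[\bk']$ with $\bk,\bk'\in\Sigma(r)$ and fix anchors $z_\bk\in\pi[\bk]$, $z_{\bk'}\in\pi[\bk']$. If $|\pi\bi-\pi\bj|<r$ then $|z_\bk-\pi\bj|\le r\diam(\Lambda)+r$, so $\pi[\bk']$ meets $B(z_\bk,(\diam(\Lambda)+1)r)$, and symmetrically $\pi[\bk]$ meets $B(z_{\bk'},(\diam(\Lambda)+1)r)$. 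Hence the set of contributing pairs $(\bk,\bk')$ has degree at most $M:=M_{\diam(\Lambda)+1}$ on either side. Since $(\nu\times\nu)(\{\bi\in[\bk],\bj\in[\bk'],|\pi\bi-\pi\bj|<r\})\le\nu([\bk])\nu([\bk'])$, and is zero for non-contributing pairs, applying $\nu([\bk])\nu([\bk'])\le\tfrac12(\nu([\bk])^2+\nu([\bk'])^2)$ and summing over the at most $M$ neighbours of each $\bk$ and of each $\bk'$ yields
\[
  (\nu\times\nu)\bigl\{(\bi,\bj):|\pi\bi-\pi\bj|<r\bigr\}\le M\sum_{\bk\in\Sigma(r)}\nu([\bk])^2 .
\]

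\emph{Conclusion.} Finally I would substitute and integrate. Put $r_0:=\min\{1,\diam(\Lambda)\}$. On $r\ge r_0$ use the trivial bound $(\nu\times\nu)\{\cdots\}\le\nu(\Sigma)^2$, so that part of $I_t(\mu)$ is $\le t\,\nu(\Sigma)^2\int_{r_0}^\infty r^{-t-1}\,dr<\infty$. For $r<r_0$, exchange the $r$-integral with the sum over $\bk$: each nonempty $\bk$ lies in $\Sigma(r)$ exactly for $r\in[\lambda_\bk,\lambda_{\bk^-})$, so $\int_0^{r_0}r^{-t-1}\mathbb 1\{\bk\in\Sigma(r)\}\,dr\le\int_{\lambda_\bk}^{\lambda_{\bk^-}}r^{-t-1}\,dr\le\tfrac1t\lambda_\bk^{-t}$, and therefore
\[
  I_t(\mu)\le t\,\nu(\Sigma)^2\int_{r_0}^\infty r^{-t-1}\,dr + M\sum_{\bk\in\Sigma_*}\lambda_\bk^{-t}\,\nu([\bk])^2<\infty
\]
by hypothesis. (Finiteness of this series also forces $\nu$, hence $\mu$, to be non-atomic — otherwise the terms along a single infinite branch do not tend to zero — so $I_t(\mu)$ is genuinely the off-diagonal integral; this needs no separate discussion.) The energy criterion now gives $\dimh\pi_*\nu\ge t$.

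\emph{Expected obstacle.} There is essentially only one non-bookkeeping ingredient, namely the OSC bounded-multiplicity estimate for the $r$-mesh cylinders, together with the care that its constant $M_a$ is independent of the scale $r$; this is classical, and once it is in hand the rest is routine manipulation of the layer-cake representation of the energy integral and an interchange of sum and integral.
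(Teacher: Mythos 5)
Your proof is correct and follows essentially the same route as the paper's: both reduce the $t$-energy of $\pi_*\nu$ to $\sum_{\bi}\lambda_{\bi}^{-t}\nu([\bi])^2$ via the OSC bounded-multiplicity property of stopping-time cylinder antichains, and then invoke the Frostman/energy criterion. The only differences are bookkeeping — you use a continuous layer-cake decomposition over $r$ with direct cylinder-to-cylinder adjacency, whereas the paper discretises over the scales $\lambda_{\min}^n$ and routes the overlap count through an auxiliary grid of cubes — and these are interchangeable.
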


The proof of \cref{thm:OSC} is standard, however, we could not find any proper reference for it.

\begin{proof}
  Without loss of generality, we may assume that $|U|=1$, where $U$ is the open and bounded subset
  of $\bbR^d$ with respect to the OSC holds. Let $\lambda_{\min}$ be the smallest contraction
  ratio as usual. For simplicity, let $\mu:=\pi_*\nu$. By Frostman's lemma
  \cite[Lemma~1.9.9]{BSS23}, it is enough to show that $\iint\|x-y\|^{-t}d\mu(x)d\mu(y)<\infty$.
  Then
  \begin{align*}
    \iint\frac{d\mu(x)d\mu(y)}{\|x-y\|^{t}}&=\sum_{n=0}^\infty\iint\|x-y\|^{-t}\mathbb{1}\{\lambda_{\min}^{n+1}<\|x-y\|\leq\lambda_{\min}^n\}d\mu(x)d\mu(y)\\
					   &\leq\sum_{n=0}^\infty\lambda_{\min}^{-t(n+1)}\int\mu\left(B(x,\lambda_{\min}^n)\right)d\mu(x).
  \end{align*}
  Let $$\cQ_n=\{\prod_{j=1}^d[m_j\lambda_{\min}^n,(m_j+1)\lambda_{\min}^n):m_j\in\bbZ\text{ for
  }j=1,\ldots,d\}$$ be the partition of $\bbR^d$ into axis parallel squares of side length
  $\lambda_{\min}^n$ such that the origin is a vertex of a partition element. Let $\cQ_n(x)$ be
  the unique partition element in $\cQ_n$ containing $x$. Furthermore, for
  $\cQ(x)=\prod_{j=1}^d[m_j\lambda_{\min}^n,(m_j+1)\lambda_{\min}^n)$, let
  $\widehat{\cQ}(x)=\prod_{j=1}^d[(m_j-1)\lambda_{\min}^n,(m_j+2)\lambda_{\min}^n)$. Then
  \begin{align*}
    \sum_{n=0}^\infty\lambda_{\min}^{-t(n+1)}\int\mu\left(B(x,\lambda_{\min}^n)\right)d\mu(x)&\leq\sum_{n=0}^\infty\lambda_{\min}^{-t(n+1)}\sum_{Q\in\cQ_n}\mu\left(\widehat{Q}\right)^2
  \end{align*}
  Let 
  $$
  \Gamma_n:=\{\bi\in\Sigma_*:\lambda_{\bi}\leq\lambda_{\min}^n<\lambda_{\bi_-}\}.
  $$
  Clearly, $\Gamma_n\cap\Gamma_m=\emptyset$ for every $n\neq m$. By the open set condition, there
  exists $C>0$ such that
  $\#\{\bi\in\Gamma_n:f_{\bi}(\overline{U})\cap\widehat{Q}\neq\emptyset\}\leq C$ for every
  $Q\in\cQ_n$. Thus,
  \begin{align*}
    \sum_{n=0}^\infty\lambda_{\min}^{-t(n+1)}\sum_{Q\in\cQ_n}\mu\left(\widehat{Q}\right)^2
    &\leq\sum_{n=0}^\infty\lambda_{\min}^{-t(n+1)}\sum_{Q\in\cQ_n}\left(\sum_{\substack{\bi\in\Gamma_n\\
    f_{\bi}(\overline{U})\cap\widehat{Q}\neq\emptyset}}\nu([\bi])\right)^2\\
    &\leq
    C^2\sum_{n=0}^\infty\lambda_{\min}^{-t(n+1)}\sum_{Q\in\cQ_n}\sum_{\substack{\bi\in\Gamma_n\\
    f_{\bi}(\overline{U})\cap\widehat{Q}\neq\emptyset}}\nu([\bi])^2\\
    &=C^2\sum_{n=0}^\infty\lambda_{\min}^{-t(n+1)}\sum_{\bi\in\Gamma_n}\#\{Q\in\cQ_n: f_{\bi}(\overline{U})\cap\widehat{Q}\neq\emptyset\}\nu([\bi])^2.
    \intertext{Since for every $\bi\in\Gamma_n$, there are at most $5^d$-many $Q\in\cQ_n$ such that $ f_{\bi}(\overline{U})\cap\widehat{Q}\neq\emptyset$}
    &\leq C^25^d\sum_{n=0}^\infty\lambda_{\min}^{-t(n+1)}\sum_{\bi\in\Gamma_n}\nu([\bi])^2\\
    &\leq C^25^d\lambda_{\min}^{-t}\sum_{n=0}^\infty\sum_{\bi\in\Gamma_n}\lambda_{\bi}^{-t}\nu([\bi])^2\\
    &\leq C^25^d\lambda_{\min}^{-t}\sum_{n=0}^\infty\sum_{\bi\in\Sigma_n}\lambda_{\bi}^{-t}\nu([\bi])^2.
  \end{align*}
\end{proof}

\begin{proposition}\label{thm:lowerbound}
  Let $\alpha>0$ be such that $\alpha\leq\sum_{i=1}^m \lambda_i^{s(\alpha)}p_ie^\alpha\log p_i$, where $\sum_{i=1}^m \lambda_i^{s(\alpha)}p_ie^\alpha=1$. Suppose that $\ell(n)$ satisfies
  $\liminf_{n\to\infty}\ell(n)/\log n = 1/\alpha$. Then, $\dim_H(R(\bo,\ell))
  \geq s(\alpha)$ for $\bbP$-almost every $\bo$.
\end{proposition}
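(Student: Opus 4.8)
The plan is to feed the random measure $\mu=\mu_\bo$ constructed in \cref{sec:randommeasure} into the energy criterion of \cref{thm:OSC}, and then to upgrade the resulting positive-probability dimension bound to an almost sure one by a zero--one law. Throughout, fix $\gamma\in(0,\alpha)$ close enough to $\alpha$ that $s(\gamma)=P_\gamma(1)>0$ (recall from the start of the section that then $\gamma<-\sum_i\lambda_i^{s(\gamma)}p_ie^{\gamma}\log p_i$, so the constructions of \cref{sec:cantor,sec:randommeasure} are available), and fix $\epsilon_0\in(0,s(\gamma)/3)$.

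First I would pin down the data of the construction. Let $\delta>0$ be the quantity furnished by \cref{thm:forfrostman} for this $\epsilon_0$, and choose a constructor function whose $n(\emptyset)$ simultaneously exceeds the thresholds coming from \cref{lem:lemma7,thm:rightsubset,thm:forfrostman}; let $\mu$ be the associated measure. Since $\bbE(\mu(\Sigma))=1$ and $\mu(\Sigma)=X_\emptyset<\infty$ almost surely (as $\bbE(X_\emptyset^2)\le C$ by \cref{thm:summability}), \cref{thm:rightsubset} together with monotone convergence in $k$ gives $\bbE(\mu(E_k^\delta))\uparrow1$, so I fix $q\in\bbN$ with $\bbE(\mu(E_q^\delta))>0$ and set $A:=\{\mu(E_q^\delta)>0\}$, an event of positive probability.

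Next I would pass to the restricted measure $\nu:=\mu|_{E_q^\delta}$, for which $\nu([\bj])\le\mu([\bj])\,\mathbb{1}\{[\bj]\cap E_q^\delta\neq\emptyset\}$ for every $\bj\in\Sigma_*$. Combining this with \cref{thm:forfrostman} and interchanging the (nonnegative) sum with the expectation gives $\sum_{\bj\in\Sigma_*}\lambda_{\bj}^{-(s(\gamma)-3\epsilon_0)}\nu([\bj])^2<\infty$ almost surely; and on $A$ we also have $0<\nu(\Sigma)=\mu(E_q^\delta)\le X_\emptyset<\infty$. Hence \cref{thm:OSC} applies with $t=s(\gamma)-3\epsilon_0$ and shows $\dimh\pi_*\nu\ge s(\gamma)-3\epsilon_0$ on $A$. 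Since $\mu$, and therefore $\nu$, is carried by the random Cantor set of \cref{sec:cantor} — which is contained in $\{\bi\in\Sigma:\bi\in[(\sigma^n\bo)|_{\ell(n)}]\text{ infinitely often}\}$ by construction, because every word appearing there has the form $(\sigma^p\bo)|_{\ell(p)}$ (as $P(n)\subseteq\ell^{-1}(n)$) and the nested construction forces infinitely many distinct such $p$ — the measure $\pi_*\nu$ is supported on $R(\bo,\ell)$. Thus $\dimh R(\bo,\ell)\ge s(\gamma)-3\epsilon_0$ on the positive-probability event $A$.

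The last step is to eliminate ``positive probability''. For every $k$, the set $\bigcup_{n\ge k}[(\sigma^n\bo)|_{\ell(n)}]$ depends on $\bo$ only through the coordinates $\bo_{k+1},\bo_{k+2},\dots$, and $R(\bo,\ell)$ is the $\pi$-image of the intersection of these (decreasing) sets over $k\in\bbN$; hence $R(\bo,\ell)$, and with it the event $\{\dimh R(\bo,\ell)\ge s(\gamma)-3\epsilon_0\}$, is measurable with respect to the tail $\sigma$-algebra of $(\bo_j)_{j\in\bbN}$. Having positive probability, this event then has probability one by Kolmogorov's zero--one law. Finally, letting $\epsilon_0\downarrow0$ and $\gamma\uparrow\alpha$ along sequences and using $s(\gamma)=P_\gamma(1)\to P_\alpha(1)=s(\alpha)$ by continuity (cf.\ \cref{thm:almostlegendre,thm:twospec}), I conclude $\dimh R(\bo,\ell)\ge s(\alpha)$ for $\bbP$-almost every $\bo$. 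The only points requiring genuine care are the verification that $\pi_*\nu$ cannot escape $R(\bo,\ell)$ and the tail-measurability used here; all the analytic difficulty has already been absorbed into \cref{thm:forfrostman}.
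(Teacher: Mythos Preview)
Your proof is correct and follows essentially the same approach as the paper's: restrict the random measure $\mu$ to $E_q^\delta$, feed the restriction into the energy criterion \cref{thm:OSC} using \cref{thm:forfrostman}, upgrade positive probability to full probability via Kolmogorov's zero--one law (using that $R(\bo,\ell)$ is tail-measurable), and finish by letting $\gamma\uparrow\alpha$ and $\epsilon_0\downarrow0$. The paper merely packages the two limits into a single integer parameter by taking $\gamma=\alpha-1/n$ and $\epsilon_0=1/(3n)$, but the argument is otherwise the same.
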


\begin{proof}
  Clearly, the map $\alpha\mapsto s(\alpha)$ is continuous and monotone increasing. Thus, it is
  enough to show that $\bbP(\{\bo\in\Sigma:\dim_H(R(\bo,\ell))\geq s(\alpha-1/n)-1/n\})=1$ for
  every $n\in\bbN$. It is also clear that $R(\bo',\ell)=R(\bo,\ell)$ for every $\bo',\bo\in\Sigma$
  such that there exists $n\geq1$ such that $o_k'=o_k$ for every $k\geq n$. Thus,
  $\{\bo\in\Sigma:\dim_H(R(\bo,\ell))\geq s(\alpha-1/n)-1/n\}$ is a tail event and by Kolmogorov's
  zero-one law \cite[Corollary~1.7.1]{Khoshnevisan}, it is enough to show that
  $\bbP(\{\bo\in\Sigma:\dim_H(R(\bo,\ell))\geq s(\alpha-1/n)-1/n\})>0$ for every $n\in\bbN$.

  Let $\mu=\mu_{\bo}$ be the random measure constructed in \cref{sec:randommeasure} for
  $\alpha-1/n$. By \cref{thm:forfrostman}, choose $\delta>0$ and $K\geq1$ such that for
  $\epsilon_0=(3n)^{-1}$ we have
  \begin{equation}\label{eq:willbeenough}
    \sum_{\bj\in\Sigma_*}\lambda_{\bj}^{-s(\alpha-1/n)+1/n}\bbE(\mu([\bj])^2)\mathbb{1}\{[\bj]\cap
    E_q^\delta\neq\emptyset\}<\infty,
  \end{equation}
  for every $q\in\bbN$. By \cref{thm:summability} and \cref{thm:rightsubset}, there exists
  $q\in\bbN$ such that $\bbP(\{\bo\in\Sigma:\mu(E_q^\delta\cap R(\bo,\ell))>0\})>0$. Hence, it is
  enough to show that $\bbP(\{\bo\in\Sigma:\dim_H\mu|_{E_q^\delta}\geq s(\alpha-1/n)-1/n\})>0$. By
  \cref{thm:OSC}, it is enough to show that
  $$
  \bbE\left(\sum_{\bi\in\Sigma_*}\lambda_{\bi}^{-s(\alpha-1/n)+1/n}\mu|_{E_q^\delta}([\bi])^2\right)<\infty.
  $$
  But $\mu|_{E_q^\delta}([\bi])^2=\mu([\bi]\cap
  E_q^\delta)^2\leq\left(\mu([\bi])\mathbb{1}\{[\bi]\cap
  E_q^\delta\neq\emptyset\}\right)^2=\mu([\bi])^2\mathbb{1}\{[\bi]\cap E_q^\delta\neq\emptyset\}$,
  and so, the claim follows by \cref{eq:willbeenough}.
\end{proof}

\section{Completing the proofs}\label{sec:complete}

Finally, we summarize the proofs of the main theorems.

\begin{proof}[Proof of \cref{thm:main}]
  Let $\alpha>0$ and $\ell\colon\bbN\to\bbN$ be an increasing function such that
  $$
  \liminf_{n\to\infty}\frac{\ell(n)}{\log n}=\frac1\alpha,
  $$
  and let $\bbP_p$ be the Bernoulli measure defined by the probability vector
  $p=(p_1,\ldots,p_N)$. If $\alpha>\alpha_2$ then by \cref{thm:fullCover}, $R(\bo,\ell)=\Lambda$
  almost surely. If $\alpha_1<\alpha\leq \alpha_2$ then
  $\cH^{s_0}(R(\bo,\ell))=\cH^{s_0}(\Lambda)$ $\bbP_p$-almost surely by \cref{thm:notfullregion}.
  So we may assume that $\alpha\leq\alpha_1$.

  Let $s(\alpha)$ be as in \cref{eq:salpha}. By \cref{thm:twospec}, $s(\alpha)$ is the unique root
  of the probabilistic pressure defined in \cref{eq:probpressure}, and so, by \cref{thm:ubmain1},
  $\dim_HR(\bo,\ell)\leq s(\alpha)$ $\bbP_p$-almost surely. If $\alpha_0<\alpha<\alpha_1$ then
  $\dim_HR(\bo,\ell)\geq s(\alpha)$ by \cref{thm:spectrumregion} $\bbP$-almost surely. If
  $\alpha=\alpha_1$ then by taking $c>1$ arbitrary and having $\ell'(n):=c\ell(n)$, we get that
  $\dim_HR(\bo,\ell)\geq\dim_HR(\bo,\ell')=s(\alpha/c)$ $\bbP_p$-almost surely. By taking the
  limit $c\to1$ on a countable sequence and using the continuity of $\alpha\mapsto s(\alpha)$ (see
  \cref{thm:legendre}) we get that $\dim_HR(\bo,\ell)\geq\lim_{c\to1}s(\alpha/c)=s(\alpha)=s_0$
  $\bbP$-almost surely.

  Finally, suppose that $\alpha\leq\alpha_0$. Then, the  almost sure lower bound $\dim_H R(\bo,
  \ell)\ge s(\alpha)$ follows by \cref{thm:lowerbound}. 
\end{proof}

\begin{proof}[Proof of \cref{thm:main2}]
  The proof is the combination of \cref{thm:ubmain2}, \cref{thm:fullCover},
  \cref{thm:notfullregion} and \cref{thm:spectrumregion} for every $\alpha\neq\alpha_1$. One can
  verify the case $\alpha=\alpha_1$ by relying on the continuity of $\alpha\mapsto t(\alpha)$ and
  on the monotonicity of the set $R(\bo,\ell)$ similarly to the previous proof. 
\end{proof}

\bibliographystyle{alpha}
\bibliography{Bibliography}

\Addresses

\end{document}